\newtheorem{thm}{Theorem}[section]
\newtheorem{lem}[thm]{Lemma}
\newtheorem{prop}[thm]{Proposition}
\newtheorem{cor}[thm]{Corollary}
\theoremstyle{definition}
\newtheorem{defn}[thm]{Definition}
\newtheorem{conj}[thm]{Conjecture}
\newtheorem{exmp}[thm]{Example}
\theoremstyle{remark}
\newtheorem{rem}[thm]{Remark}
\DeclareMathOperator{\tr}{tr}
\newcommand{\C}{\mathbb{C}}
\newcommand{\R}{\mathbb{R}}
\newcommand{\RR}{\mathbb{R}}
\title{Positivity Certificates via Integral Representations}
\author{
Khazhgali Kozhasov, Mateusz Micha\l ek and Bernd Sturmfels}
\email{}
\thanks{}
\begin{document}

\begin{abstract}
Complete monotonicity is a strong positivity property for
real-valued functions on convex cones. It is certified by the
kernel of the inverse Laplace transform.
We study this for negative powers of hyperbolic polynomials.
Here the certificate is the  Riesz kernel
in G{\aa}rding's integral representation.
The Riesz kernel is a hypergeometric function in the
coefficients of the given polynomial. For monomials
in linear forms, it is a  Gel'fand-Aomoto hypergeometric function,
related to volumes of polytopes.
We establish complete monotonicity for sufficiently negative powers of
elementary symmetric functions. We also show that small negative powers of these polynomials are not completely monotone, proving one direction of a conjecture
by Scott and~Sokal. 
\end{abstract}

\maketitle

\section{Introduction}
\label{sec1}
A real-valued function   $\,f : \R^n_{>0} \rightarrow \R \,$ 
on the positive orthant is {\em completely monotone}~if
\begin{align}\label{eq:cm0}
  (-1)^k\frac{\partial^kf}{\partial x_{i_1} \partial x_{i_2}  \cdots \,\partial x_{i_k}}(x) \,\, \geq 
  \,\, 0 \quad \hbox{   for all $x\in \R_{>0}^n$,}
\end{align}
and for all index sequences $1 \leq i_1 \leq i_2 \leq \cdots \leq i_k \leq n$
 of arbitrary length $k$.
 In words, the function $f$ and all of its
 signed derivatives are nonnegative on the open cone~$\R_{>0}^n$.
 
While this definition makes sense for $\,\mathcal{C}^{\infty}$ functions $f$,
we here restrict ourselves to a setting that is the natural one in
 real algebraic geometry. Namely, we consider functions 
\begin{equation} \label{eq:factored}
 f \,\,= \,\, p_1^{s_1} p_2^{s_2} \cdots p_m^{s_m}  , 
 \end{equation}
where each $p_i$ is a homogeneous polynomial in $n$ variables that is positive 
on $\R^n_{>0}$.
 The alternating sign in \eqref{eq:cm0}
implies that positive powers of a polynomial cannot be completely monotone.
The real numbers $s_1,s_2,\ldots,s_m$ will therefore be negative in most cases.

The signed $k$-th order derivative in \eqref{eq:cm0} for $f=p^{s}$ has the form 
$P_{i_1\cdots i_k} \cdot p^{s-k}$ where $P_{i_1\cdots i_k}$
is a homogeneous polynomial.
Saying that $f$ is completely monotone means that
infinitely many polynomials $P_{i_1 \cdots i_k}$ 
are nonnegative on $\RR_{>0}^n$. How can this be certified?

To answer this question we apply the inverse Laplace transform
to the function $f(x)$. Our goal is to find a 
function $q(y)$ that is nonnegative on $\RR_{>0}^n$ and that satisfies
\begin{equation}
\label{eq:inverseLaplace}
 \qquad  f(x) \,\,= \, \int_{\RR_{\geq 0}^n}\!\! e^{- \langle y,x \rangle }\, q(y) dy
\,\,\, \quad \hbox{   for all $x\in \R_{>0}^n$}. 
\end{equation}
Here $dy$ denotes Lebesgue measure and
 $\langle y,x \rangle = \sum_{i=1}^n y_i x_i$ is the usual dot product.
This  integral representation certifies that $f(x)$ is completely monotone, since it implies
$$ (-1)^k\frac{\partial^kf}{\partial x_{i_1} \partial x_{i_2} \cdots \,\partial x_{i_k}}(x) \,
\,\, =\,\,\,  \int_{\RR_{\geq 0}^n} \! e^{- \langle y,x \rangle } y_{i_1} y_{i_2} \cdots y_{i_k} \, q(y) dy
\,\,\, \geq \,\, \, 0.
$$

Our interest in the formula \eqref{eq:inverseLaplace}
derives from statistics and polynomial optimization.
Certifying that a polynomial is nonnegative is an essential primitive in optimization. 
This can be accomplished by finding a representation as a
sum of squares (SOS) or as a sum of nonnegative circuit polynomials (SONC).
See e.g.~\cite{BPT} and \cite{DIW} for introductions to these techniques.
We here introduce an approach that may lead to new  certificates.

Algebraic statistics \cite{Seth} is concerned with probabilistic models
that admit polynomial representations and are well suited for data analysis. The
{\em exponential families}  in \cite{MSUZ2016} 
enjoy these desiderata.  Using the formulation above,
the distributions in an exponential family have
supports in $\RR_{>0}^n$ and
probability density functions $\,y \mapsto e^{- \langle y,x \rangle } \frac{q(y)}{f(x)}$.

One natural class of instances $f$ arises  when
the factors $p_1,\ldots,p_m$ in \eqref{eq:factored}
are linear forms with nonnegative coefficients.
An easy argument, presented in Section~\ref{sec3},
furnishes the  integral representation \eqref{eq:inverseLaplace}
and shows that $f$ is completely monotone.
However, even this simple case is important for applications, notably in the 
algorithms  for  efficient volume computations due to Lasserre and Zerron \cite{LZ1, LZ2}.
While that work was focused on polyhedra, the theory
presented here promises a natural generalization
to spectrahedra, spectrahedral shadows, and feasible regions in hyperbolic programming.

\smallskip

The present article is organized as follows. In Section \ref{sec2}
we replace $\RR_{> 0}^n$ by an arbitrary convex cone $C$, and we 
recall the Bernstein-Hausdorff-Widder-Choquet  Theorem.
This theorem says that a certifcate \eqref{eq:inverseLaplace} exists for
every completely monotone function on $C$, provided we replace
$ q(y) dy $ by a Borel measure supported in the dual cone $C^*$.
We also discuss operations that preserve complete monotonicity, such as 
restricting $f(x)$ to a linear subspace. Dually, this corresponds to pushing forward 
the representing measure.

In Section \ref{sec3} we study the case of monomials in linear forms,
seen two paragraphs above. Here the function $q(y)$ is  piecewise
polynomial, for integers $s_1,\ldots,s_m < 0$, it measures
the volumes of  fibers under projecting of a simplex onto a polytope.
In general, we interpret $q(y)dy$ as pushforward of a Dirichlet distribution
with concentration parameters $-s_i$, thus offering a link to
Bayesian statistics and machine learning.

Section \ref{sec4} concerns the  $m = 1$ case in \eqref{eq:factored}.
In order for $f = p^s$ to be completely monotone, the polynomial $p$
must be hyperbolic and the exponent $s$ must be negative.
The function $q(y)$ is known as the {\em Riesz kernel}. It can be
computed from the complex integral representation in
Theorem \ref{thm:Riesz} which is due to G{\aa}rding \cite{garding1951}.
Following \cite[Section 3]{MSUZ2016} and \cite[Section 4]{scott2014},  
we study the case when $p$ 
is the determinant of a symmetric matrix of linear forms.
Here the Riesz kernel is related to the Wishart~distribution.

Our primary objective is to develop tools for computing
Riesz kernels, and thereby certifying positivity as in \eqref{eq:cm0}.
Section \ref{sec5} brings commutative algebra into our tool box.
The relevant algebra arises from the convolution product on Borel measures.
We are interested in finitely generated subalgebras and
the polynomial ideals representing these. Elements in such
ideals can be interpreted as formulas for Riesz kernels.
In the setting of Section \ref{sec3} we recover the
 Orlik-Terao algebra \cite{OrlikTerao} of a hyperplane arrangement, 
 but now realized as a convolution algebra of piecewise polynomial
 volume functions.

In Section \ref{sec6} we give partial answers to questions raised
in the literature, namely in \cite[Conjecture 3.5]{MSUZ2016} and in
\cite[Conjecture 1.11]{scott2014}. Scott and Sokal considered 
elementary  symmetric polynomials $p$, and they conjectured
necessary and sufficient conditions on negative $s$ 
for complete monotonicity of $p^s$. We prove the necessity of their conditions. 
We also show that $p^s$ is completely monotone for sufficiently negative
exponents $s$, and we explain how to build the Riesz kernel in this setting.

In Section \ref{sec7}  we view the Riesz kernel $q$ of  $f = p^s$
as a function of the coefficients of the hyperbolic polynomial $p$.
In the setting of Section \ref{sec3}, when $f = \ell_1^{s_1} \cdots \ell_m^{s_m}$
for linear forms $\ell_i$,
we examine the dependence of $q$ on the coefficients of the $\ell_i$. 
These dependences are hypergeometric in nature.
In the former case, the Riesz kernel satisfies the
$\mathcal{A}$-hypergeometric system \cite[Section 3.1]{SST}, where
$\mathcal{A} $ is the support of $p$. In the latter case,
it is a Gel'fand-Aomoto hypergeometric function \cite[Section 1.5]{SST}.
This explains the computation in \cite[Example 3.11]{MSUZ2016}, and it opens
up the possibility to use D-modules as in~\cite[Section 5.3]{SST} or \cite{Wal} for 
further developing the strand of research initiated here.

\section{Complete Monotonicity}\label{sec2}

We now start afresh, by offering a more general definition of complete monotonicity.
Let $V$ denote a finite-dimensional real vector space and $C$ an open convex cone in $V$.
Its closed dual cone is denoted
$C^*\,=\,\{y\in V^*: \langle y,x\rangle \geq 0 \,\,\hbox{for all}\, \,x\in C\}$. Points in $C^*$
are linear functions that are nonnegative on $C$.
We are interested in differentiable functions on the open cone $C$
that satisfy the following very strong notion of positivity.

\begin{defn}[Complete monotonicity]
\label{def:cm}
A function $\,f\colon C\rightarrow \R\,$ is 
\emph{completely monotone} if $f$ is $\mathcal{C}^{\infty}$-differentiable and,
for all $k \in \mathbb{N}$ and
all vectors $v_1,\dots,v_k\in C$, we~have
\begin{align}\label{eq:cm}
  (-1)^kD_{v_1}\cdots D_{v_k} f(x) \,\,\geq \,\, 0 \quad \hbox{   for all $x\in C$.}
\end{align}
Here $D_v$ is the directional derivative along the vector $v$.
In this definition  it suffices to assume that $v_1,\ldots,v_k$ are extreme rays   
of $C$. This is relevant when $C$ is polyhedral.
\end{defn}

\begin{exmp}\label{ex:cm}
For $V=\R^n$ and $C=\R_{>0}^n$, the positive orthant, \eqref{eq:cm} is equivalent to~\eqref{eq:cm0}.
Moreover, if $C \supset \R_{>0}^n$ holds in Definition~\ref{def:cm}
then $f$ is completely monotone on $\R_{>0}^n$~too.
\end{exmp}

Since differentiation commutes with specialization, we have the following remark.

\begin{rem} \label{rem:restriction}
Let $W$ be a linear subspace of $V$ such that
$C \cap W$ is nonempty. If $f$ is completely monotone on $C$,
then the restriction $f|_W$ is completely monotone on~$C \cap W$.
\end{rem}

No polynomial of positive degree is completely monotone.
Indeed, by Remark~\ref{rem:restriction}, it suffices  to consider the case
$n=1$ and $C = \R_{>0}$.
Derivatives of univariate polynomials cannot alternate their sign for $x \gg 0$.
But, this is easily possible for rational functions.

\begin{exmp} \label{ex:mono}
Fix arbitrary negative real numbers  $s_1,s_2,\ldots,s_n$. Then the function
$$ f \,\,=\,\, x_1^{s_1}  x_2^{s_2} \cdots   x_n^{s_n}  $$
is completely monotone on $C = \R_{>0}^n$.
Remark \ref{rem:restriction} says that we may  replace the $x_i$ by linear forms.
We conclude that products of negative powers of linear forms are
completely monotone on their polyhedral cones.
This is the theme we study in Section~\ref{sec3}.
\end{exmp}

The Bernstein-Hausdorff-Widder Theorem \cite[Thm.~$12a$]{Widder1972} characterizes completely monotone functions in one variable. They are obtained as  Laplace transforms of Borel
measures on the positive reals.  This result was generalized 
by Choquet \cite[Thm.~$10$]{Choquet69}.
He proved the following theorem for higher dimensions and
arbitrary convex cones $C$.

\begin{thm}[Bernstein-Hausdorff-Widder-Choquet theorem]\label{thm:BHWC}
Let $f$ be a real-valued function on the open cone $C$. Then $f$ is completely monotone if and only if it is the Laplace transform of a unique Borel measure $\mu$ supported on the dual cone $C^*$, that~is,
\begin{align}\label{eq:Laplace}
  f(x) \,\,= \, \int_{C^*}e^{- \langle y,x \rangle }\, d\mu (y).
\end{align}
\end{thm}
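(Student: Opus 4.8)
The plan is to prove the Bernstein-Hausdorff-Widder-Choquet theorem in two directions, with the ``if'' direction being routine and the ``only if'' direction carrying all the weight. For the \emph{sufficiency} part, suppose $f(x) = \int_{C^*} e^{-\langle y,x\rangle}\,d\mu(y)$ for a Borel measure $\mu$ on $C^*$. Differentiating under the integral sign along directions $v_1,\dots,v_k \in C$ produces $D_{v_1}\cdots D_{v_k} f(x) = (-1)^k \int_{C^*} \langle y,v_1\rangle \cdots \langle y,v_k\rangle\, e^{-\langle y,x\rangle}\,d\mu(y)$, and since each $\langle y,v_i\rangle \geq 0$ for $y \in C^*$ by definition of the dual cone, the sign condition \eqref{eq:cm} follows immediately; one only needs to justify the differentiation under the integral by a standard dominated-convergence argument, using that for $x$ in a compact subset of $C$ the relevant polynomial-times-exponential integrand is dominated by an $L^1(\mu)$ function (itself a consequence of $f$ being finite at a nearby point of $C$). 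I would also record here that finiteness of $f$ on all of $C$ forces $\mu$ to be a genuine (locally finite) Borel measure.

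For the \emph{necessity} part, the cleanest route is to reduce to the classical one-variable Bernstein-Hausdorff-Widder theorem \cite[Thm.~$12a$]{Widder1972} and then assemble the multivariate measure via a projective-limit or Choquet-theoretic argument, following \cite[Thm.~$10$]{Choquet69}. First I would fix a point $x_0 \in C$ and, for each $y$ in a suitable index set, consider the restriction $t \mapsto f(x_0 + t v)$ for $v \in C$: complete monotonicity of $f$ implies this is a completely monotone function of one variable on $t > 0$ (and extends appropriately at $t=0$), so by the classical theorem it is the Laplace transform of a measure on $\R_{\geq 0}$. The task is then to glue these one-parameter families consistently into a single measure on $V^*$ supported on $C^*$. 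The conceptual device here is Choquet's theory: the cone of completely monotone functions on $C$ is a weakly complete convex cone whose extreme rays are precisely the exponentials $x \mapsto e^{-\langle y,x\rangle}$ with $y \in C^*$ (one checks an exponential is extreme, and conversely that any extreme completely monotone function satisfies the Cauchy functional equation $f(x+x') = f(x)f(x')/f(0)$ after normalization, forcing it to be exponential); Choquet's integral representation theorem then yields the measure $\mu$ on the set of extreme rays $\cong C^*$.

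Uniqueness of $\mu$ should be handled separately and is comparatively soft: if $\int_{C^*} e^{-\langle y,x\rangle}\,d\mu_1(y) = \int_{C^*} e^{-\langle y,x\rangle}\,d\mu_2(y)$ for all $x \in C$, then the Laplace transforms of $\mu_1$ and $\mu_2$ agree on an open set, and injectivity of the Laplace transform on measures (again reducible to the one-dimensional statement by restricting to lines, or invoked directly) gives $\mu_1 = \mu_2$. The main obstacle I anticipate is the necessity direction, specifically the verification that the extreme rays of the cone of completely monotone functions are exactly the exponentials and that the hypotheses of Choquet's theorem (a well-capped or weakly complete cone with a compact metrizable base, or the appropriate non-metrizable analogue) are genuinely met in this setting --- this requires a careful choice of topology on the function cone and an argument that the base is compact, which is where the real work lies. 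Everything else (differentiation under the integral, the reduction to one variable, uniqueness) is standard. Since both \cite{Widder1972} and \cite{Choquet69} are cited, I would ultimately present this as an application of those two references, spelling out only the identification of extreme rays and the passage from Choquet's abstract representation to the stated formula \eqref{eq:Laplace}.
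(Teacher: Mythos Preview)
Your proposal is sound, but you should know that the paper does not actually give a proof of this theorem: it is stated as a classical result, attributed to Widder \cite[Thm.~$12a$]{Widder1972} in one variable and to Choquet \cite[Thm.~$10$]{Choquet69} in general. Immediately after the statement the paper sketches only the ``if'' direction, exactly as you do (differentiating under the integral and observing $\langle y,v_i\rangle \geq 0$ on $C^*$), and then explicitly says that the ``only-if'' direction is the hard part, deferring entirely to Choquet's paper rather than reproducing the argument.

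So your outline goes well beyond what the paper itself provides. Your Choquet-theoretic route for necessity --- identifying the extreme rays of the cone of completely monotone functions as the exponentials $e^{-\langle y,\cdot\rangle}$ with $y\in C^*$ and invoking Choquet's integral representation --- is indeed the content of the cited reference, and your caveats about verifying the topological hypotheses are exactly the right places to be careful. For the purposes of matching the paper, though, a one-line citation to \cite{Choquet69} is all that is expected for the ``only-if'' direction; the paper's own contribution is to compute the representing measure explicitly for particular~$f$, not to re-prove the abstract existence statement.
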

\begin{rem}
By the formula \eqref{eq:Laplace}, any completely monotone function $f: C\rightarrow \R$ extends to a holomorphic function on the complex tube $C+i \cdot V=\{x+i\cdot v: x\in C, v\in V\}\subset V_{\C}$.
\end{rem}

The if direction in Theorem~\ref{thm:BHWC} is seen by
differentiating under the integral sign. Indeed, the left hand side of \eqref{eq:cm}
equals the following function which is nonnegative:
$$ (-1)^kD_{v_1}\dots D_{v_k} f(x) \,\, =\,\,
\int_{C^*} e^{- \langle y,x \rangle } \prod_{j=1}^k \langle y,v_i \rangle\, d\mu (y) .$$
The integrand is a positive real number 
whenever $x,v_1,\ldots,v_k \in C$ and $y \in C^*$.
The more difficult part is the only-if direction. Here one needs to construct the
integral representation. This is precisely our topic of study in this paper, for certain functions~$f$.

The Borel measure $\mu$ in the integral representation (\ref{eq:Laplace})
is called the {\em Riesz measure} when $f = p^s$. As we shall see, its support
 can be a lower-dimensional subset of $C^*$.
But in many  cases the Riesz measure is absolutely continuous 
with respect to Lesbesgue measure on $C^*$, that is, there exists a measurable function $q: C^* \rightarrow \R_{\geq 0}$ such that
\begin{align}
  d\mu(y) \,\, =\,\, q(y) dy.
  \end{align}
The nonnegative function $q(y)$ is called the {\em Riesz kernel} of $f=p^s$.
It serves as the certificate for complete monotonicity of $f$,
and our aim is to derive formulas for $q(y)$.

We next present a formula for the Riesz kernel in the situation of Example~\ref{ex:mono}.

\begin{prop}\label{prop:LapTrProdxi}
For any positive real numbers  $\alpha_1,\dots,\alpha_n$, we have  
  \begin{equation} \label{eq:product}
    x_1^{-\alpha_1}\cdots x_n^{-\alpha_n} \,=\, \int_{\R^n_{\geq 0}} e^{-\langle y,x \rangle}
    \frac{y_1^{\alpha_1-1}\cdots y_n^{\alpha_n-1}}{\Gamma(\alpha_1)\cdots\Gamma(\alpha_n)} \,dy
\qquad    \hbox{for all}\,\,\, x\in \R^n_{>0}.
  \end{equation}
  Hence the Riesz kernel of $\,f = \prod_{i=1}^n x_i^{-\alpha_i} \,$ is equal to
$ \, q(y) = \prod_{i=1}^n \Gamma(\alpha_i)^{-1} y_i^{\alpha_i-1} $.
\end{prop}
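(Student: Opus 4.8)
The plan is to prove the displayed identity \eqref{eq:product} by reducing to the classical one-variable Gamma integral and then invoking Fubini. First I would recall that for a single variable and a single positive parameter $\alpha$, the substitution $t = yx$ in the definition $\Gamma(\alpha) = \int_0^\infty t^{\alpha-1}e^{-t}\,dt$ gives the scaling relation
\begin{equation}
x^{-\alpha} \,=\, \frac{1}{\Gamma(\alpha)}\int_0^\infty e^{-yx}\, y^{\alpha-1}\,dy \qquad \text{for all } x > 0.
\end{equation}
This is the $n=1$ case of the proposition, and it is where the convergence hypothesis $\alpha > 0$ is actually used: the integrand is integrable at $y=0$ precisely because $\alpha - 1 > -1$.

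Next I would take the product of these one-variable identities over $i = 1,\dots,n$. Since $e^{-\langle y,x\rangle} = \prod_{i=1}^n e^{-y_i x_i}$ and the kernel factors as $\prod_i y_i^{\alpha_i-1}/\Gamma(\alpha_i)$, the right-hand side of \eqref{eq:product} is literally the iterated integral $\prod_{i=1}^n \bigl(\int_0^\infty e^{-y_i x_i} y_i^{\alpha_i-1}\,dy_i/\Gamma(\alpha_i)\bigr)$. By Tonelli's theorem the nonnegative integrand may be integrated over $\R^n_{\geq 0}$ in any order and equals this product of one-dimensional integrals, which in turn equals $\prod_{i=1}^n x_i^{-\alpha_i}$. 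This establishes \eqref{eq:product}.

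Finally, the assertion about the Riesz kernel is immediate: comparing \eqref{eq:product} with the defining integral representation \eqref{eq:Laplace}--\eqref{eq:inverseLaplace}, the measure $d\mu(y) = \bigl(\prod_{i=1}^n \Gamma(\alpha_i)^{-1} y_i^{\alpha_i-1}\bigr)dy$ is supported on $C^* = \R^n_{\geq 0}$ and represents $f = \prod_i x_i^{-\alpha_i}$; by the uniqueness in Theorem~\ref{thm:BHWC} it is \emph{the} Riesz measure, and it is absolutely continuous with density $q(y) = \prod_{i=1}^n \Gamma(\alpha_i)^{-1} y_i^{\alpha_i-1}$.

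There is no real obstacle here; the only point requiring the slightest care is justifying the passage from the product of integrals to the integral over the orthant, which is handled by Tonelli since everything in sight is nonnegative, and noting that each one-dimensional integral converges exactly because $\alpha_i > 0$. One could alternatively cite Example~\ref{ex:mono} for the qualitative statement that $f$ is completely monotone, and then use uniqueness in Theorem~\ref{thm:BHWC} together with a direct verification of \eqref{eq:product} to identify the kernel; but the self-contained Fubini argument above is shorter and makes the convergence hypothesis transparent.
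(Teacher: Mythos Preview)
Your proof is correct and follows essentially the same approach as the paper: derive the $n=1$ identity from the substitution $t=yx$ in the Gamma integral, then obtain the general case as a product of the one-variable formulas. You add a bit more detail (Tonelli for the product of integrals, uniqueness from Theorem~\ref{thm:BHWC} to identify the Riesz kernel), but the argument is the same.
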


\begin{proof}
Recall the definition of the gamma function from its
integral representation:
$$ \Gamma(a) \,\,\, =\,\,\, \int_{\R_{\geq 0}} t^{a-1} e^{-t} dt. $$
Regarding $x > 0$ as a constant, we perform the change of variables $t = yx$.
This gives
 \begin{equation} \label{eq:product1}
x^{-a}\,\,\, =\,\,\,  \frac{1}{\Gamma(a)} \int_{\R_{\geq 0}} y^{a-1} e^{-yx} dy. 
\end{equation}
This is the $n=1$ case of the desired formula \eqref{eq:product}. The general case $n>1$ is
obtained by multiplying $n$ distinct copies of the univariate formula \eqref{eq:product1}.
\end{proof}

The Riesz kernel offers a certificate for a function $f$ to be completely monotone.
This is very powerful because it proves that infinitely many
functions are nonnegative on $C$.

\begin{exmp} \label{ex:intro}
Let $V = \R^3$. Note that $p = x_1x_2 + x_1 x_3 +x_2x_3$ is
positive on $C=\R^3_{>0}$. We consider negative powers
$f = p^{-\alpha}$ of this polynomial. The function $f$ is not completely monotone 
for $\alpha < 1/2$. For instance, if $\alpha = 5/11$ then this is proved by
$$ \qquad \frac{\partial^{12} f}{\partial x_1^4 \,\partial x_2^4 \,\partial x_3^4}
\quad = \quad
p^{-137/11} \cdot \hbox{a polynomial of degree $12$ with $61$ terms.} $$
Namely, the value of this function at the positive point $(1,1,1)$ is found to be
$$ \begin{matrix} -\frac{16652440985600}{762638095543203}3^{6/11}
\,\, = \,\, -0.0397564287 \,\, < \,\, 0.  \end{matrix} $$

Suppose now that  $\alpha > 1/2$.
We claim that no such violation exists, i.e.~the function
  $f = p^{-\alpha}\,$ is completely monotone. %For $\alpha> 1/2$ 
  This is certified by
  exhibiting the Riesz kernel:
  $$q(y) \,\,\,=\,\,\,\frac{2^{1-\alpha}}{(2\pi)^{\frac{1}{2}}\Gamma(\alpha)\Gamma(\alpha-\frac{1}{2})}
  \left(y_1 y_2 + y_1 y_3 + y_2 y_3
  -\frac{1}{2}(y_1^2+y_2^2+y_3^2)\right)^{\! \alpha-\frac{3}{2}} .$$
  Then the integral representation \eqref{eq:Laplace}
  holds with the Borel measure $\, d\mu(y) = q(y) dy$. A proof can be found in
  \cite[Corollary 5.8]{scott2014}.   We invite our readers to verify this formula.
   \end{exmp}

We next discuss the implications of Remark \ref{rem:restriction}
for the measure $d\mu$ in \eqref{eq:Laplace}.
 Suppose that $f(x)=\int_{C^*}e^{- \langle y,x \rangle }\, d\mu (y)$ 
 holds for a cone $C$ in a vector space $V$.  We want to restrict $f$ 
 to a subspace $W$.  The inclusion $W\subset V$ induces a
  \emph{projection} $\pi:V^*\rightarrow W^*$,    by restricting the linear form.
   The dual of the cone $C\cap W$ equals $\pi(C^*)$ in
      the space $W^*$. We are looking for a representation 
   $f(x)=\int_{(C\cap W)^*}e^{- \langle y,x \rangle }\, d\mu' (y)$, $x\in C\cap W$.
    The measure $\mu'$ is obtained from 
     the measure $\mu$ by the general \emph{push-forward} construction.
          
\begin{defn}
Let $\mu$ be a measure on a set $S$ and $\pi :S\rightarrow S'$ a measurable function.
 The push-forward measure $\pi_*\mu$ on $S'$ is defined by setting 
 $\,(\pi_*\mu)(U):=\mu(\pi^{-1}(U))$, where $U\subset S^\prime$ is any measurable subset. 
\end{defn}

\begin{exmp}
Suppose we are given a (measurable) function $\pi:V_1\rightarrow V_2$ 
of real vector spaces and a measure with density $g$ on $V_1$. 
Then $(\pi_*\mu)(U)=\int_{\pi^{-1}(U)}g(x) dx$.
\end{exmp}

The following results are straightforward from the definitions.

\begin{prop}\label{prop:push-forward}
Let $f = p^s$ be completely monotone on a convex cone $C\subset V$ and $\mu$ the Riesz 
measure of $f$. Then the Riesz measure of the restriction of $f$ to the subspace $W$ is the push-forward $\pi_*\mu$, where $\pi:V^*\rightarrow W^*$ is the projection.
\end{prop}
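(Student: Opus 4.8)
The plan is to derive the statement from the uniqueness clause of the Bernstein--Hausdorff--Widder--Choquet Theorem (Theorem \ref{thm:BHWC}) together with the change-of-variables formula for push-forward measures. By Theorem \ref{thm:BHWC}, since $f$ is completely monotone on $C$, we have $f(x)=\int_{C^*}e^{-\langle y,x\rangle}\,d\mu(y)$ with $\mu$ the unique Borel measure supported on $C^*$. By Remark \ref{rem:restriction}, the restriction $f|_W$ is completely monotone on $C\cap W$, hence again by Theorem \ref{thm:BHWC} it is the Laplace transform of a \emph{unique} Borel measure supported on $(C\cap W)^*\subset W^*$; this measure is by definition the Riesz measure of $f|_W$, and the task is to identify it with $\pi_*\mu$.

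First I would check that $\pi_*\mu$ is an admissible candidate. The projection $\pi\colon V^*\to W^*$ obtained by restricting linear forms is linear, hence continuous, hence Borel-measurable, so $\pi_*\mu$ is a well-defined Borel measure on $W^*$. Moreover, for every $y\in C^*$ and every $x\in C\cap W$ one has $\langle\pi(y),x\rangle=\langle y,x\rangle\ge 0$, so $\pi(C^*)\subseteq (C\cap W)^*$ and therefore $\pi_*\mu$ is supported inside $(C\cap W)^*$.

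Second, the core computation. Fix $x\in C\cap W$ and regard it as an element of $V$; since $\pi$ is restriction of linear forms, $\langle y,x\rangle=\langle\pi(y),x\rangle$ for all $y\in V^*$. Applying the change-of-variables identity $\int_{W^*}g\,d(\pi_*\mu)=\int_{V^*}(g\circ\pi)\,d\mu$ to the nonnegative measurable function $g(z)=e^{-\langle z,x\rangle}$ gives
\[
\int_{(C\cap W)^*}e^{-\langle z,x\rangle}\,d(\pi_*\mu)(z)
=\int_{C^*}e^{-\langle\pi(y),x\rangle}\,d\mu(y)
=\int_{C^*}e^{-\langle y,x\rangle}\,d\mu(y)
=f(x)=f|_W(x).
\]
Thus $\pi_*\mu$ is a Borel measure supported on $(C\cap W)^*$ whose Laplace transform is $f|_W$, and by the uniqueness assertion in Theorem \ref{thm:BHWC} it must be the Riesz measure of $f|_W$.

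The only mildly delicate point is the bookkeeping about cones and supports: one should justify the inclusion $\pi(C^*)\subseteq (C\cap W)^*$ used above, and note that since linear images of closed cones need not be closed, the support of $\pi_*\mu$ may lie in a proper subset of $(C\cap W)^*$. This is harmless, because the uniqueness in Theorem \ref{thm:BHWC} only requires the representing measure to be supported somewhere within the dual cone. Everything else is a single application of the push-forward/Fubini identity, so I expect no serious obstacle.
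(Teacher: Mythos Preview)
Your proof is correct and is precisely the detailed verification that the paper omits: the paper simply declares the result ``straightforward from the definitions'' and gives no argument. Your use of the change-of-variables formula for $\pi_*\mu$ together with the uniqueness clause in Theorem~\ref{thm:BHWC} is exactly the intended reasoning, and your care about $\pi(C^*)\subseteq (C\cap W)^*$ and the possible non-closedness of $\pi(C^*)$ is a welcome addition.
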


\begin{cor}
A function $f$ is completely monotone on the cone $C \subset V$ if and only if, for every
$g \in {\rm GL}(V)$, the function $f \circ g^{-1}$ is completely monotone on $g(C)$.
The Riesz measures transform accordingly under the linear transformation dual to $g$.
\end{cor}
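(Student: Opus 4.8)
The plan is to prove the two directions separately and then track the measure. The ``if'' direction is immediate: taking $g$ to be the identity shows that complete monotonicity of $f\circ g^{-1}$ on $g(C)$ for \emph{all} $g\in\mathrm{GL}(V)$ forces complete monotonicity of $f$ on $C$.

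For the ``only if'' direction I would argue directly from Definition~\ref{def:cm} via the chain rule. Fix $g\in\mathrm{GL}(V)$ and assume $f$ is completely monotone on $C$. Since $g$ is linear and invertible, $g(C)$ is again an open convex cone, and $g^{-1}$ maps $g(C)$ bijectively onto $C$. For $x\in g(C)$ and a vector $v\in g(C)$ the chain rule gives $D_v(f\circ g^{-1})(x)=D_{g^{-1}(v)}f(g^{-1}(x))$, and iterating this for $v_1,\dots,v_k\in g(C)$ yields
\begin{equation*}
(-1)^kD_{v_1}\cdots D_{v_k}(f\circ g^{-1})(x)\,=\,(-1)^kD_{g^{-1}(v_1)}\cdots D_{g^{-1}(v_k)}f\bigl(g^{-1}(x)\bigr).
\end{equation*}
Here $g^{-1}(x)\in C$ and each $g^{-1}(v_j)\in C$, so the right-hand side is nonnegative by complete monotonicity of $f$. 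Hence $f\circ g^{-1}$ is completely monotone on $g(C)$.

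To identify the Riesz measure, start from the representation $f(x)=\int_{C^*}e^{-\langle y,x\rangle}\,d\mu(y)$ supplied by Theorem~\ref{thm:BHWC}, and substitute $x\mapsto g^{-1}(x)$. Writing $g^*\colon V^*\to V^*$ for the transpose, defined by $\langle g^*y,x\rangle=\langle y,gx\rangle$, we have $\langle y,g^{-1}(x)\rangle=\langle (g^*)^{-1}y,\,x\rangle$, so a linear change of variables gives
\begin{equation*}
(f\circ g^{-1})(x)\,=\,\int_{C^*}e^{-\langle (g^*)^{-1}y,\,x\rangle}\,d\mu(y)\,=\,\int_{(g^*)^{-1}(C^*)}e^{-\langle z,x\rangle}\,d\bigl(((g^*)^{-1})_*\mu\bigr)(z).
\end{equation*}
A short check shows $(g^*)^{-1}(C^*)=\{z\in V^*:g^*z\in C^*\}=(g(C))^*$, so this is a legitimate integral representation supported on the dual of $g(C)$. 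By the uniqueness clause of Theorem~\ref{thm:BHWC}, the Riesz measure of $f\circ g^{-1}$ must equal the push-forward $((g^*)^{-1})_*\mu$; that is, it transforms under the map dual to $g$ exactly as asserted, in the same spirit as Proposition~\ref{prop:push-forward} but for invertible maps in place of inclusions.

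I do not expect a genuine obstacle here: the argument is the chain rule plus a change of variables in the Laplace integral. The only point that needs care is bookkeeping of the several dual maps and confirming the transformation rule $(g(C))^*=(g^*)^{-1}(C^*)$ for the dual cones, which also makes precise the phrase ``transform accordingly under the linear transformation dual to $g$.''
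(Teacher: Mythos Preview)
Your proof is correct and is precisely the spelling-out of what the paper intends: the paper states that this result is ``straightforward from the definitions'' and gives no further argument, and your chain-rule computation together with the change-of-variables in the Laplace integral is exactly that straightforward verification.
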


% Lastly, we point out that Riesz kernels behave well under linear coordinate changes.
% Let $f$ be completely monotone on a cone $C\subset V$ and $g\in  {\rm GL}(V)$ 
% a linear automorphism. Then $f\circ g^{-1}$ is completely monotone on $g(C)$. The action of $g$ on $V$ induces an action $g^*$ on $V^*$. As a matrix, $g^*$ is the transpose of $g$.
%   The cone dual to $g(C)$ is $(g^{-1})^*(C^*)$.
   
% \begin{lem}\label{lem:change of coordinates}
% If $f(x)=\int_{C^*}e^{- \langle y,x \rangle }\, d\mu (y)$ for $x\in C$, then for $x\in g(C)$ we have
% \begin{equation}
% \label{eq:wantrhs}
% f(g^{-1}x) =\,\,\int_{(g^{-1})^*(C^*)}e^{- \langle y,x \rangle }\, d((g^{-1})^*_*\mu) (y).
% \end{equation}
% \end{lem}

% \begin{proof} This is proved by a direct computation, as follows:
% $$f(g^{-1}x)\,=\,\int_{C^*}e^{- \langle y,g^{-1}x \rangle }\, d\mu (y)\,=\,\int_{C^*}e^{- \langle (g^{-1})^*y,x \rangle }\, d\mu (y). $$
% The last expression is found to equal the right hand side in \eqref{eq:wantrhs}.
% \end{proof}

\section{Monomials in Linear Forms}\label{sec3}

Let $\ell_1,\ell_2,\ldots,\ell_m$ be linear forms in $n$ variables and consider the open polyhedral~cone
\begin{equation}
\label{eq:polyhedralcone}
 C \,\, = \,\, \bigl\{ v \in \RR^n \, : \, \ell_i(v) > 0 \,\,\,\hbox{for} \,\,\,
i=1,2,\ldots,m \bigr\}. 
\end{equation}
The dual cone $C^*$ is spanned by $\ell_1,\ell_2,\ldots,\ell_m$ in
$(\RR^n)^*$. A generator $\ell_i$ is an extreme ray of the cone $C^*$ if 
and only if the face $\{v \in C : \ell_i(v) = 0\}$ is a facet of $C$.

\begin{prop} \label{prop:pushdirichlet}
For any positive real numbers $\alpha_1,\alpha_2,\ldots,\alpha_m$, the function
$$ f \quad = \quad \ell_1^{-\alpha_1} \ell_2^{-\alpha_2} \cdots  \, \ell_m^{-\alpha_m} 
$$ 
is completely monotone on the cone $C$. The Riesz measure of $f$ is the
pushforward of the Dirichlet measure
$\,\frac{y_1^{\alpha_1-1} \cdots \,\,y_m^{\alpha_m-1}}{ \Gamma(\alpha_1)
\,\,\cdots \, \,\Gamma(\alpha_m)} dy\,$ on the orthant $\RR_{> 0}^m$ under the linear map 
$\,L:\RR_{\geq 0}^m \rightarrow C^*$ that takes the standard basis 
$e_1,\ldots,e_m$ to the linear forms $\ell_1,\ldots,\ell_m$.
\end{prop}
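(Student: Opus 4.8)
The plan is to bootstrap from Proposition~\ref{prop:LapTrProdxi}, which already supplies the integral representation of the coordinate monomial $x_1^{-\alpha_1}\cdots x_m^{-\alpha_m}$ on $\R^m_{>0}$, and to pull it back along the linear map whose components are the forms $\ell_i$. Concretely, I would set $\Lambda\colon\R^n\to\R^m$, $\,v\mapsto(\ell_1(v),\dots,\ell_m(v))$, so that $f=g\circ\Lambda$ with $g(x)=\prod_{i=1}^m x_i^{-\alpha_i}$. By the definition \eqref{eq:polyhedralcone} of $C$ we have $\Lambda(C)\subseteq\R^m_{>0}$. Since $D_v(g\circ\Lambda)=(D_{\Lambda v}g)\circ\Lambda$ for directional derivatives, and all the points $x,v_1,\dots,v_k$ appearing in \eqref{eq:cm} lie in $C$, complete monotonicity of $g$ on $\R^m_{>0}$ (Proposition~\ref{prop:LapTrProdxi}) forces complete monotonicity of $f$ on $C$. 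This is the linear-pullback analogue of Remark~\ref{rem:restriction}, valid even when $\Lambda$ is not injective.

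For the Riesz measure, write $q_0(y)=\prod_{i=1}^m\Gamma(\alpha_i)^{-1}y_i^{\alpha_i-1}$ for the Dirichlet density and $\mu_0=q_0(y)\,dy$ on $\R^m_{\geq 0}$. Proposition~\ref{prop:LapTrProdxi} gives $g(\Lambda(x))=\int_{\R^m_{\geq 0}}e^{-\langle y,\Lambda(x)\rangle}\,q_0(y)\,dy$ for $x\in C$, and the crucial identity is $\langle y,\Lambda(x)\rangle=\sum_{i=1}^m y_i\,\ell_i(x)=\langle L(y),x\rangle$, where $L$ is exactly the map of the statement. Substituting and invoking the change-of-variables formula for a pushforward yields $f(x)=\int_{C^*}e^{-\langle z,x\rangle}\,d(L_*\mu_0)(z)$; here one uses that $L$ maps $\R^m_{\geq 0}$ onto $C^*$, which holds because $C^*$ is the cone generated by $\ell_1,\dots,\ell_m$, as recorded just before the proposition, so that $L_*\mu_0$ is supported on $C^*$. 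By the uniqueness clause of the Bernstein--Hausdorff--Widder--Choquet Theorem~\ref{thm:BHWC}, $L_*\mu_0$ is then the Riesz measure of $f$. In the special case where the $\ell_i$ span $(\R^n)^*$, the map $\Lambda$ is injective and the whole argument is just Proposition~\ref{prop:push-forward} applied to the subspace $W=\im\Lambda\subset\R^m$.

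No step is genuinely hard. The two points that warrant a word of care are (a) that pullback along a possibly non-injective linear map preserves complete monotonicity --- which follows from the chain rule for directional derivatives together with $\Lambda(C)\subseteq\R^m_{>0}$ --- and (b) that $L$ surjects onto $C^*$, so that the pushforward measure lands on the correct cone; the latter is the standard duality between the cone $\{v:\ell_i(v)>0\}$ and the cone generated by the $\ell_i$, already stated in the paragraph preceding the proposition.
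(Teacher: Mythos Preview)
Your proof is correct and follows essentially the same approach as the paper's. The paper's argument is a terse sketch: it notes that $f$ is the pullback of the monomial $\prod x_i^{-\alpha_i}$ under the linear map dual to $L$ and then invokes ``a slight extension of Proposition~\ref{prop:push-forward},'' remarking that surjectivity of $L$ is not needed. Your version spells out exactly what that slight extension amounts to---the chain rule $D_v(g\circ\Lambda)=(D_{\Lambda v}g)\circ\Lambda$ together with $\Lambda(C)\subseteq\R^m_{>0}$ for complete monotonicity, and the adjoint identity $\langle y,\Lambda x\rangle=\langle Ly,x\rangle$ plus the change-of-variables formula for the pushforward measure---so the two proofs coincide in substance.
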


\begin{proof} The monomial $x_1^{-\alpha_1} \cdots x_m^{-\alpha_m}$ 
is completely monotone on $\R_{>0}^m$ by Example~\ref{ex:mono}.
Our function $f$ is its pullback under
the linear map $ C \rightarrow \RR_{> 0}^m$ that is dual to
the map $L: \RR_{\geq 0}^m \rightarrow C^*$. The result follows
from  a slight extension of Proposition \ref{prop:push-forward}.
In that proposition, the map $L$ would have been surjective but this is not really necessary.
\end{proof}

In what follows we assume that 
the dual cone $C^*$ is full-dimensional in $\RR^n$. 
Then $\ell_1,\ldots,\ell_m$ span $(\RR^n)^*$.
The case ${\rm dim}(C^*) < n$ is discussed at the end of this section.
There is a canonical polyhedral subdivision of $C^*$, called the
{\em chamber complex}. It is the common refinement of all
cones spanned by linearly independent subsets of $\{\ell_1,\ldots,\ell_m\}$.
Thus, for a general vector $\ell$ in $C^*$, the chamber containing $\ell$
is the intersection of all halfspaces defined by linear inequalities
 ${\rm det}(\ell_{i_1},\ldots,\ell_{i_{n-1}}, y) > 0$
that hold for $y = \ell$. The collection of all chambers and their faces
forms a polyhedral fan with support~$C^*$. That fan is the
chamber complex of  the map $L$, which we regard as an $n \times m$-matrix.

\begin{exmp} \label{ex:3by5}
Let $n=3$, $m=5$ and consider the matrix
\begin{equation}
\label{eq:3by5} L \quad = \quad 
\begin{small} \begin{pmatrix}
1 & 1 & 1 & 1 & 1 \\
0 & 1 & 2 & 1 & 0 \\
0 & 0 & 1 &  2 & 1  \end{pmatrix} . \end{small}
\end{equation}
The corresponding linear forms $\ell_1,\ldots,\ell_5$
are the entries of the row vector $ (x_1, x_2, x_3) L $.
For any positive real numbers $\alpha_1,\alpha_2,\alpha_3,\alpha_4,\alpha_5$,
 we are interested in the function
$$ f \,\,=\,\, x_1^{-\alpha_1} (x_1 + x_2)^{-\alpha_2} (x_1 + 2 x_2 + x_3)^{-\alpha_3}
(x_1 + x_2 + 2 x_3)^{-\alpha_4} (x_1 + x_3)^{-\alpha_5}.
$$
This is completely monotone on the pentagonal cone $C$,
consisting of all vectors $(x_1,x_2,x_3)$ where the five linear forms are positive.
Its dual cone $C^*$ is also a pentagonal cone. The elements $(y_1,y_2,y_3)$ of $C^*$ are 
nonnegative linear combinations of the columns of~$L$.

The chamber complex of $L$ consists of $11$ full-dimensional cones.
Ten of them are triangular cones. The remaining cone, right in the middle of $C^*$,
is pentagonal. It~equals
\begin{equation}
\label{eq:centralchamber} \bigl\{\, y \in \RR^3 \,\,:\,\,
 y_1 \geq  y_2 ,\,\,
 y_1 \geq  y_3  ,\,\,
2 y_3 \geq y_2, \,\,
2 y_2 \geq y_3 \,\,{\rm and}\,\,
y_2+y_3 \geq y_1 \,\bigr\}. 
\end{equation}
\end{exmp}  

Returning to the general case, 
for each $y$ in the interior of $C^*$, the  fiber $L^{-1}(y)$ of the
map  $L: \RR_{\geq 0}^m \rightarrow C^*$
is a convex polytope of dimension $m-n$. The polytope $L^{-1}(y)$
is simple provided $y$ lies in an open cone of the chamber complex.
All fibers $L^{-1}(y)$ lie in affine spaces that are parallel to ${\rm kernel}(L) \simeq \RR^{m-n}$.
In the following we denote by $\vert L\vert$  the square root of
the \emph{Gram determinant} of a matrix $L$, that is, $\vert L\vert = \sqrt{\det (LL^T)}$.  

\begin{thm}\label{thm:prodlin}
The Riesz kernel $q(y)$ of $ f = \prod_{i=1}^m \ell_i^{-\alpha_i}$ is a continuous function on the polyhedral cone $C^*$ that is homogeneous of degree
$\sum_{i=1}^m \alpha_i - n$. Its value $q(y)$ at a point $y \in C^*$ equals the integral of 
$\,|L|^{-1} \prod_{i=1}^m y_i^{\alpha_i-1} \Gamma(\alpha_i)^{-1}\,$ over the fiber $L^{-1}(y)$ with respect to $(m-n)$-dimensional Lebesgue measure. If $\alpha_1,\ldots,\alpha_m$ are integers
then the Riesz kernel $q(y)$ is piecewise polynomial and differentiable of order
$\sum_{i=1}^m \alpha_i - n-1$. To be precise, $q(y)$ is a homogeneous polynomial 
   on each cone in the chamber complex.
\end{thm}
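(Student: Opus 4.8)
The plan is to obtain the formula for $q(y)$ by combining Proposition~\ref{prop:pushdirichlet} with an explicit disintegration of Lebesgue measure along the fibers of $L$. By Proposition~\ref{prop:pushdirichlet}, the Riesz measure $\mu$ of $f$ is the pushforward under $L : \RR_{\geq 0}^m \to C^*$ of the measure $\nu$ on $\RR_{\geq 0}^m$ with density $g(y') = \prod_{i=1}^m y_i'^{\alpha_i-1}\Gamma(\alpha_i)^{-1}$. So the first step is to show that $L_*\nu$ is absolutely continuous with respect to $(n\text{-dimensional})$ Lebesgue measure on $C^*$ and to compute its density. The key is the coarea/disintegration formula: since $L$ is a surjective linear map of full rank $n$, for any integrable $h$ on $\RR_{\geq 0}^m$ one has
\begin{equation}
\int_{\RR_{\geq 0}^m} h(y')\,dy' \;=\; |L|^{-1}\int_{C^*}\Bigl(\int_{L^{-1}(y)} h(y')\,d\lambda_{m-n}(y')\Bigr)\,dy,
\end{equation}
where $\lambda_{m-n}$ is $(m-n)$-dimensional Lebesgue measure on the affine fiber $L^{-1}(y)$ and $|L| = \sqrt{\det(LL^T)}$ is the Gram factor coming from the change-of-variables Jacobian for the splitting $\RR^m \cong \ker(L)\oplus \ker(L)^\perp$. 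Applying this with $h = g\cdot \mathbb{1}_{L^{-1}(U)}$ for a measurable $U\subset C^*$ identifies the density of $L_*\nu$ as
$$ q(y)\;=\; |L|^{-1}\int_{L^{-1}(y)} \prod_{i=1}^m \frac{y_i'^{\alpha_i-1}}{\Gamma(\alpha_i)}\,d\lambda_{m-n}(y'), $$
which is the asserted integral formula. (When $m=n$ the fiber is a point and this reduces to Proposition~\ref{prop:LapTrProdxi} transported by $L$, giving the Jacobian factor $|L|^{-1}=|\det L|^{-1}$ directly.)

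Next I would establish the regularity statements. Homogeneity of degree $\sum\alpha_i - n$: scaling $y\mapsto ty$ scales the fiber $L^{-1}(ty) = t\cdot L^{-1}(y)$ (as affine sets through the origin direction $\ker L$), so $\lambda_{m-n}$ scales by $t^{m-n}$ and the integrand $\prod y_i'^{\alpha_i-1}$ scales by $t^{\sum\alpha_i - m}$; the product gives $t^{\sum \alpha_i - n}$. Continuity on $C^*$: on the interior this follows because the polytopes $L^{-1}(y)$ vary continuously in the Hausdorff metric (they are bounded since $\RR_{\geq 0}^m$ meets $\ker L$ only at $0$ when $C^*$ is full-dimensional) and the integrand is a fixed continuous function; one checks continuity up to the boundary of $C^*$ by a dominated-convergence argument using that the fiber volume shrinks to zero there. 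For integer $\alpha_i$, I would argue piecewise polynomiality by a standard parametric-integration argument: on a fixed chamber $\sigma$ of the chamber complex the polytope $L^{-1}(y)$ is combinatorially constant (its face lattice does not change), so choosing affine coordinates on $\ker L$ we may write $L^{-1}(y)$ as a polytope whose facet-defining inequalities have right-hand sides depending linearly on $y$; integrating the polynomial $\prod y_i'^{\alpha_i - 1}$ (the $y_i'$ being affine-linear in the fiber coordinates and in $y$) over such a parametrized simple polytope, vertex by vertex via a triangulation, yields a polynomial in $y$ on $\sigma$ — this is exactly the Lasserre–Avrachenkov type formula for the volume/moment of a parametrized polytope. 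Its degree is $(m-n)$ from the fiber volume plus $\sum(\alpha_i - 1) = \sum\alpha_i - m$ from the monomial, total $\sum\alpha_i - n$, consistent with homogeneity. Differentiability of order $\sum\alpha_i - n - 1$ across chamber walls: a piecewise-polynomial function that is globally continuous and whose pieces have degree $d$ on a fan is automatically $\mathcal{C}^{d-1}$, provided no lower-order obstruction occurs; more carefully, one sees this from the integral formula directly — differentiating under the integral sign $\sum\alpha_i - n - 1$ times still leaves a positive power ($\geq 0$) of the vanishing ``slack'' distances to the facets, so the derivative extends continuously across the wall, whereas one more derivative can produce a jump.

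The main obstacle I anticipate is the passage between the two descriptions of the fiber — as the abstract set $L^{-1}(y)\subset \RR_{\geq 0}^m$ versus as a polytope in the fixed $(m-n)$-dimensional space $\ker(L)$ with $y$-dependent inequalities — and in particular getting the Jacobian constant $|L|^{-1}$ exactly right. This requires choosing an orthonormal basis of $\ker(L)$ and of $\ker(L)^\perp$, computing how $\prod y_i'^{\alpha_i-1}$ and $d\lambda_{m-n}$ pull back, and verifying that the normalization matches the $m=n$ case of Proposition~\ref{prop:LapTrProdxi}. The genuinely delicate analytic point is the behavior on the boundary $\partial C^*$ and on the lower-dimensional skeleton of the chamber complex: there the fibers $L^{-1}(y)$ degenerate (lose dimension or become non-simple), so I must check that the integral formula still defines the unique continuous extension and that the Borel measure $q(y)\,dy$ actually equals the Riesz measure $\mu$ there — but uniqueness of $\mu$ in Theorem~\ref{thm:BHWC}, together with the fact that the lower skeleton is Lebesgue-null, handles this cleanly. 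Everything else is a routine but bookkeeping-heavy parametric-integral computation, so I would not carry it out in full detail here.
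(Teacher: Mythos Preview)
Your approach is correct and, for the integral formula itself, essentially coincides with the paper's: both appeal to Proposition~\ref{prop:pushdirichlet} and then compute the density of the pushforward, though you make the coarea step explicit where the paper leaves it implicit. The homogeneity and continuity arguments are also in the same spirit.

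The genuine divergence is in the integer case. The paper does not argue piecewise polynomiality and $\mathcal{C}^{\sum\alpha_i-n-1}$-regularity directly. Instead it first treats $\alpha_1=\cdots=\alpha_m=1$, where $q(y)$ is literally the fiber-volume function and the regularity is a classical fact from multivariate spline theory (cited from De~Concini--Procesi); it then handles arbitrary positive integers $\alpha_i$ by the trick of replacing $L$ with a matrix $L^{(\alpha)}$ in which the $i$th column is repeated $\alpha_i$ times, thereby reducing $\prod\ell_i^{-\alpha_i}$ to a product of $m'=\sum\alpha_i$ reciprocals of linear forms with all exponents equal to~$1$. This buys an immediate appeal to the spline literature, at the cost of introducing an auxiliary higher-dimensional map. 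Your route---parametrize the combinatorially constant fiber on each chamber and integrate the polynomial $\prod y_i'^{\alpha_i-1}$ explicitly---is more self-contained and avoids the detour, but the differentiability step needs more care than you give it: your parenthetical ``a continuous piecewise polynomial of degree $d$ is automatically $\mathcal{C}^{d-1}$'' is false as stated (e.g.\ $x^2$ on $x\geq 0$ and $x^2+x$ on $x<0$ is continuous of degree~$2$ but not $\mathcal{C}^1$), and your fallback argument about slack variables vanishing to the right order is the correct idea but would require a genuine local computation across a wall to be complete. The paper sidesteps this entirely by pointing to the established spline result.
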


\begin{proof}
We first consider the case $\alpha_1 = \cdots = \alpha_m = 1$.
Here Proposition  \ref{prop:LapTrProdxi} says that
  \begin{equation} \label{eq:product2}
    x_1^{-1}\cdots x_m^{-1} \,=\, \int_{\R^m_{\geq 0}} e^{-\langle y,x \rangle} \,dy.
  \end{equation}
The Riesz measure of this function is Lebesgue measure on $\RR_{\geq 0}^m$,
and its Riesz kernel is the constant function $1$. The pushforward of this
measure under the linear map $L$ is absolutely continuous with respect to
Lebesgue measure on $C^*$. The Riesz kernel 
of this pushforward is the function $q(y)$ whose value at $y\in C^*$ is the volume of the fiber
$L^{-1}(y)$ divided by $|L|$. This volume is a function in $y$ 
that is homogeneous of degree $m-n $.
Moreover, the  volume function is a polynomial on each chamber and it is 
differentiable of order $m-n-1$. This is known from the
theory of splines; see e.g.~the book by De Concini and  Procesi \cite{DP}.
This proves the claim since $ m= \sum_{i=1}^m \alpha_i $.

Next we let $\alpha_1,\ldots,\alpha_m$ be arbitrary positive integers.
We  form a new matrix $L^{(\alpha)}$ from $L$
by replicating $\alpha_i$ identical columns from the column  $\ell_i$ of $L$.
Thus the number of columns of $L^{(\alpha)}$ is $m' = \sum_{i=1}^m \alpha_i$.
The matrix $L^{(\alpha)}$ represents a linear map from $\RR_{\geq 0}^{m'}$
onto the same cone $C^*$ as before, and also the chamber complex
of $L^{(\alpha)}$ remains the same.
However, its fibers are polytopes of 
dimension $m'-n$, which is generally larger than $m-n$.
We now  apply the argument in the previous paragraph to this map. This gives the asserted result,
namely the Riesz kernel $q(y)$ is piecewise polynomial of degree $m'-n$, and is
differentiable of order $m'-n-1$ across walls of the chamber complex.

We finally consider the general case when the $\alpha_i$ are arbitrary 
positive real numbers. The formula for the Riesz kernel follows by 
Proposition \ref{prop:pushdirichlet}. The homogeneity and continuity properties 
of the integral over $\,x_1^{\alpha_1 - 1} \cdots x_m^{\alpha_m-1}\,$
hold here as well, because this function is well-defined and
positive on each fiber $L^{-1}(y)$.
\end{proof}

\begin{exmp} \label{ex:24}
Let $n=2$, $m=4$ and 
$L = \begin{pmatrix} 3 & 2 & 1 & 0 \\ 0 & 1 & 2 & 3 \end{pmatrix}$.
Here $C^* = \RR_{\geq 0}^2$ and the chamber complex is the
division into three cones defined by the lines
$y_2= 2y_1$ and $y_1 = 2y_2$. The  corresponding monomial in linear forms 
has the integral representation
$$
(3x_1)^{-\alpha_1} (2 x_1 + x_2)^{-\alpha_2} (x_1 + 2x_2)^{-\alpha_3}
(3 x_2)^{-\alpha_4}
\quad = \quad
\int_{\R_{\geq 0}^2} \!\! e^{-y_1 x_1 - y_2 x_2} q(y_1,y_2) d y_1 d y_2.
$$
For $\alpha_1 = \alpha_2 = \alpha_3 = \alpha_4 = 1$, the Riesz kernel is
the piecewise quadratic function
$$ q(y_1,y_2) \,\,\,= \,\,\,
\frac{1}{108}\cdot \begin{cases}
\qquad 3 y_1^2 &  \hbox{if $ y_2 \geq 2 y_1$,} \\
4 y_1 y_2 - y_1^2 - y_2^2 & \hbox{if $ y_2 \leq 2 y_1$ and $y_1 \leq 2 y_2$,} \\
\qquad 3 y_2^2 & \hbox{if $ y_1 \geq 2 y_2 $.}
\end{cases}
$$
The three binary quadrics measure the areas of the convex
polygons $L^{-1}(y)$. These polygons are triangles, quadrilaterals and triangles,
in the three cases. Note that $q(y)$ is differentiable.
For small values of $k \geq 1$, it is instructive to work out the piecewise polynomials $q(y)$ of degree $2+k$ for all $\binom{3+k}{3}$ cases when $\alpha_i \in \mathbb{N}_+$
with $\sum_{i=1}^4 \alpha_i = 4+k$.
\end{exmp}

We have shown that the reciprocal product of linear forms is
the Laplace transform of the piecewise polynomial function $q(y)$ which
measures the volumes of fibers of $L$:
$$ \ell_1^{-1}(x) \cdots \ell_m^{-1}(x) \quad = \quad
\int_{C^*} e^{\langle y ,x\rangle} q(y) dy . $$
The volume function can be recovered from this formula
by applying the inverse Laplace transform. This technique
was applied by  Lasserre and Zerron \cite{LZ1,LZ2} to develop
a surprisingly efficient algorithm for computing the volumes of convex polytopes.

\begin{exmp} \label{ex:35}
Fix the matrix $L$ in Example~\ref{ex:3by5}, and consider any vector
$y$ in the central chamber  \eqref{eq:centralchamber}. The fiber
$L^{-1}(y)$ is a pentagon. Its vertices are the rows~of
$$
\begin{small}
\frac{1}{6} 
\begin{pmatrix}
0 &  \! 3 y_1+3 y_2-3 y_3 &  0 &  \!\! -3 y_1+3 y_2+3 y_3 \! &  6 y_1-6 y_2 \\
0 &  6 y_1-6 y_3 & \!\!  -3 y_1+3 y_2+3 y_3 \! &  0 &  \! 3 y_1-3 y_2+3 y_3 \! \\ 
 6 y_1-6 y_3 &  0 &  3 y_2 &  0 &  -3 y_2+6 y_3 \\
 6 y_1-2 y_2-2 y_3 \! &  0 &  4 y_2-2 y_3 &  -2 y_2+4 y_3 &  0 \\  
 6 y_1-6 y_2 &  6 y_2-3 y_3 &  0 &  3 y_3 &  0
\end{pmatrix}.
\end{small}
$$
The Riesz kernel is given by the area of this pentagon:
$$ \begin{matrix} q(y) \,\, = \,\, \frac{1}{6}{\rm area}(L^{-1}(y)) \,\,=\,
\frac{1}{24}(6y_1y_2 + 6 y_1 y_3 + 2 y_2 y_3 -3 y_1^2 -5 y_2^2 -5 y_3^2).
\end{matrix} $$
Similar quadratic formulas hold for the Riesz kernel
 $q(y)$ on the other ten chambers.
\end{exmp}

\begin{rem}\label{rem:degenerate}
Proposition \ref{prop:pushdirichlet} also applies in the case
when $\ell_1,\ldots,\ell_m$ do not span $\RR^n$. Here
the cone $C^*$ is lower-dimensional. The Riesz measure is
supported on that cone. There is no Riesz kernel $q(y)$ in the
sense above. But, we can still consider
the volume function on the cone~$C^*$.
If $m=1$, then $f = \ell^{-\alpha}$ is a negative power of
a single linear form. This function $f$ is completely monotone on the
half-space $C = \{\ell > 0\}$. The cone
$C^*=\{t\ell:t \geq 0\}$ is the ray spanned by $\ell$. The
Riesz measure is absolutely continuous with respect to Lebesgue measure $dt$ on that ray,
with density $t^{\alpha-1}/\Gamma(\alpha)$, cf. \eqref{eq:product1}.
\end{rem}

\section{Hyperbolic Polynomials}\label{sec4}

We now return to the setting of Section \ref{sec2}
where we considered negative powers $f = p^{-\alpha}$
of a homogeneous polynomial. We saw in Section \ref{sec3}
that this is completely monotone if $p$ is a product of linear forms.
For which polynomials $p$ and which $\alpha$ 
can we expect such a function to be completely monotone?
We begin with a key example.

Fix an integer $m \geq 0$, set $n = \binom{m+1}{2}$,
and let  $x = (x_{ij}) $ be a symmetric $m \times m$ matrix of unknowns.
Its determinant ${\rm det}(x)$ is a homogeneous polynomial of degree~$m$
in the $x_{ij}$. We write $C$ for the open cone in $\RR^n$ consisting of all
positive definite matrices. Up to closure, this cone is self-dual with respect to the
trace inner product $\langle x,y\rangle = {\rm trace}(xy)$. Thus,
 $C^*$ is the closed cone of positive
semidefinite (psd) matrices in $\RR^n$.

\begin{thm}[Scott-Sokal \cite{scott2014}] \label{thm:det}
  Let $\alpha\in \R$. The function $\det(x)^{-\alpha}$ is completely monotone on the cone 
  $C$  of positive definite symmetric $m \times m$ matrices if and only if 
  $$ \begin{matrix}\,\alpha \in \{0, \frac{1}{2}, 1, \frac{3}{2},\dots, \frac{m-1}{2}\} \quad
  \hbox{or} \quad  \alpha> \frac{m-1}{2}. \end{matrix} $$
  \end{thm}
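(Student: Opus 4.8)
The plan is to prove Theorem~\ref{thm:det} by exhibiting the Riesz kernel explicitly in the "large exponent" range and by producing an explicit violation of complete monotonicity in the complementary range, while handling the finitely many half-integer values separately. The key classical input is the \emph{Wishart integral} (the Siegel-Gindikin gamma integral over the positive definite cone): for $\alpha > \tfrac{m-1}{2}$ one has
\begin{equation}
\det(x)^{-\alpha} \,\,=\,\, \frac{1}{\Gamma_m(\alpha)} \int_{C^*} e^{-\langle x, y\rangle} \det(y)^{\alpha - \frac{m+1}{2}}\, dy \qquad \hbox{for all } x \in C,
\end{equation}
where $\Gamma_m(\alpha) = \pi^{m(m-1)/4}\prod_{j=0}^{m-1}\Gamma(\alpha - \tfrac{j}{2})$ is the multivariate gamma function, whose product representation shows it is finite and positive exactly when $\alpha > \tfrac{m-1}{2}$. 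By the "if" direction of Theorem~\ref{thm:BHWC}, the nonnegativity of the integrand on the psd cone $C^*$ immediately gives complete monotonicity of $\det(x)^{-\alpha}$ on $C$ for $\alpha > \tfrac{m-1}{2}$; here the Riesz kernel is $q(y) = \Gamma_m(\alpha)^{-1}\det(y)^{\alpha - (m+1)/2}$, which is integrable against $e^{-\langle x,y\rangle}$ precisely in this range. So the first step is to recall and cite this integral (Gindikin; it also appears in \cite{scott2014} and \cite[Section 3]{MSUZ2016}), and to note that its convergence boundary is exactly $\alpha = \tfrac{m-1}{2}$.

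Second, I would treat the discrete values $\alpha \in \{0, \tfrac12, 1, \dots, \tfrac{m-1}{2}\}$. The case $\alpha = 0$ is trivial. For $\alpha = k/2$ with $0 < k \le m-1$, the Riesz measure is no longer absolutely continuous: it is supported on the lower-dimensional locus of psd matrices of rank $\le k$, and arises as the pushforward of Lebesgue (Gaussian) measure under $A \mapsto AA^T$ for $A$ an $m \times k$ matrix. Concretely, one uses the representation $\det(x)^{-k/2} = c_{m,k}\int_{\R^{m\times k}} e^{-\langle x, AA^T\rangle}\, dA$, valid since $\langle x, AA^T\rangle = \mathrm{tr}(A^T x A)$ is a positive definite quadratic form on $\R^{mk}$ whose Gaussian integral is $\pi^{mk/2}\det(x)^{-k/2}$. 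Pushing this forward along $A \mapsto AA^T$ produces a Borel measure on $C^*$ supported on rank-$\le k$ matrices, certifying complete monotonicity for these half-integers as well. The main conceptual point is that this is the analytic continuation of the Wishart family through its "Wallach set" — the degenerate members survive even though the density blows up.

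Third, and this is where the real work lies, I would prove the \emph{necessity}: if $\alpha \notin \{0,\tfrac12,\dots,\tfrac{m-1}{2}\} \cup (\tfrac{m-1}{2},\infty)$, then $\det(x)^{-\alpha}$ is not completely monotone. By Remark~\ref{rem:restriction} and the ${\rm GL}$-invariance corollary, it suffices to find \emph{one} bad directional derivative at \emph{one} positive definite point, and a natural strategy is to restrict along a well-chosen line or low-dimensional plane of matrices so that $\det$ becomes a tractable polynomial — for instance restricting to diagonal matrices reduces $\det$ to $\prod x_{ii}$ and is too weak, so instead one restricts to a pencil $x_0 + t v$ with $v$ rank-one, or uses $2\times 2$ blocks, to reduce to the polynomial $p(x) = x_1 x_2 - x_3^2$ or $\det$ of a $j \times j$ principal block with $2 \le j \le m$; the $j=2$ case is already governed by Example~\ref{ex:intro}-type computations and shows failure for $\alpha < \tfrac12$, and the inductive/block structure should propagate this to show failure for $\alpha < \tfrac{m-1}{2}$ off the half-integer lattice. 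The cleanest route is probably the one used by Scott and Sokal: compute the Laplace transform directly and observe that the would-be Riesz "kernel" $\det(y)^{\alpha-(m+1)/2}$, interpreted as a distribution, fails to be a positive measure for these $\alpha$ — equivalently, expand a specific high-order derivative of $\det(x)^{-\alpha}$ at the identity matrix in terms of Jack polynomials or Gegenbauer-type coefficients and exhibit a negative one. I expect the main obstacle to be organizing this necessity argument uniformly in $m$: pinning down which single derivative (which index multiset $i_1,\dots,i_k$) and which point witnesses the sign violation for each $\alpha$ in the forbidden set, and verifying the sign — this is the computational heart, and I would lean on the explicit formulas in \cite[Section 4--5]{scott2014} rather than rederive them, citing their Corollary~5.8 and the surrounding analysis for the $m=2,3$ base cases and the reduction.
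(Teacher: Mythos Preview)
Your proposal is correct and follows essentially the same approach as the paper: the sufficiency for $\alpha>\tfrac{m-1}{2}$ via the Wishart/Siegel--Gindikin integral with explicit Riesz kernel, the half-integer values via the pushforward of Lebesgue measure under $Z\mapsto ZZ^T$, and the necessity deferred to Scott--Sokal. The paper is in fact more terse than you on the only-if direction---it simply cites \cite[Theorem~1.3]{scott2014} with no sketch---so your additional heuristics (restriction to blocks, the distributional interpretation of $\det(y)^{\alpha-(m+1)/2}$) go beyond what the paper offers, though note that the specific reference to \cite[Corollary~5.8]{scott2014} concerns quadratic forms rather than the determinant necessity and is not the right pointer here.
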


\begin{proof}
The if-direction of this theorem is a classical result in statistics, as we explain below. 
We define a probability distribution $N$ on the space of $m\times r$ matrices as follows: 
each column is chosen independently at random with respect to the $m$-variate normal distribution with mean zero and covariance matrix $\Sigma$. The \emph{Wishart distribution} $W_m(\Sigma,r)$ is the push-forward of the distribution $N$ to the space of symmetric $m\times m$ matrices, by the map 
$Z \rightarrow ZZ^T$. Since $ZZ^T$ is always psd,  the Wishart distribution is 
supported on the closure of $C$. There are two main cases one needs to distinguish. 

If $r\geq m$, then  the matrix $ZZ^T$ has full rank $m$ with probability one.
Hence, the support of the Wishart distribution coincides with $C$.
An explicit formula for the Riesz measure can be derived from the Wishart distribution.
This is explained in detail in \cite[Proposition 3.8]{MSUZ2016}. 
Setting $\alpha = r/2$, the Riesz kernel for $\det(x)^{-\alpha}$ is equal to
\begin{equation}
\label{eq:psdriesz}
\begin{matrix}
  q(y) \,\, = \,\,\biggl(\pi^{\frac{m(m-1)}{4}} \prod\limits_{j=0}^{m-1} \Gamma\left( \alpha- \frac{j}{2}\right)
  \biggr)^{\! -1} \!\! \det(y)^{\alpha-\frac{m+1}{2}}.
  \end{matrix}
\end{equation}
This formula holds whenever $\alpha>(m-1)/2$.
It proves that $\det(x)^{-\alpha}$ is completely monotone on $C$ for this range of $\alpha$.

If $r<m$ then the matrix $ZZ^T$ has rank at most $r$.
Hence the Wishart distribution is supported on the 
subset of $m \times m$ psd matrices of rank at most $r$.
  We have
$$ 1 \,\,=\,\, \int_{C^*} W_m(\Sigma,r)
\,\,\, =\,\,\left((2\pi)^m\det\Sigma\right)^{-r/2}\int_{(\RR^m)^r} \!\! \exp\biggl({-\frac{1}{2}\sum_{i=1}^r z_i^T \Sigma^{-1}z_i}\biggr)
 dz_1\cdots dz_r.$$
Let $Q :=(2\Sigma)^{-1}$. Multiplying both sides in the formula above by $(\det Q)^{-r/2}$, we obtain
$$(\det Q)^{-r/2}\,\,\,=\,\,\,\int_{(\RR^m)^r}\!\! \exp\biggl({-\sum_{i=1}^r z_i^T  Q z_i} \biggr) \pi^{-mr/2}\, dz_1\cdots dz_r.$$
Here $z_i$ are the columns of an $m\times r$ matrix $Z$. Hence we may rewrite this formula as
$$(\det Q)^{-r/2} \,
\,\,=\,\,\int_{\RR^{m\times r}}\!\! \exp\left({- \tr (Q ZZ^T)}\right) \pi^{-mr/2} \,dZ.$$
This establishes complete monotonicity of $\det(x)^{-r/2}$, $x\in C$, for $r=0,1,\dots,m{-}1$.
Indeed, we realized the Riesz measure on $C^*$ as the push-forward of the (scaled) Lebesgue measure $dZ/\pi^{mr/2}$ on the space $\R^{m\times r}$ of $m\times r$ matrices under the map $Z\mapsto ZZ^T$.
The only-if direction of Theorem \ref{thm:det}
is due to Scott and Sokal 
\cite[Theorem 1.3]{scott2014}.
\end{proof}

According to Remark \ref{rem:restriction},
the restriction of a  completely monotone function to a linear subspace 
 gives a completely monotone function. We thus obtain
 completely monotone functions by restricting the determinant function to
 linear spaces of symmetric matrices. This can be constructed as follows.
Fix linearly independent
symmetric $m \times m$ matrices $A_1,\ldots,A_n $ such that the following 
{\em spectrahedral cone}
is non-empty:
$$ C \quad = \quad \bigl\{ \,x \in \RR^n \,:\, x_1 A_1+\dots+ x_n A_n \,\,\,
\hbox{is positive definite} \bigr\}. $$
Its dual $C^*$ is the image of the
cone of $m \times m$ psd matrices under
the linear map $L$ that is dual to the inclusion $x\in \R^n\mapsto x_1A_1+\dots+x_nA_n$.
Such a cone is known as a {\em spectrahedral shadow}.
 The following  polynomial  vanishes
 on the boundary of~$C$:
 \begin{equation}
 \label{eq:pAAA}
 p(x)\,\, = \,\, {\rm det}(x_1 A_1+\dots+ x_n A_n) .
 \end{equation}

\begin{cor} \label{cor:A1An}
Let $\,\alpha \in \{0, \frac{1}{2}, 
 1, \frac{3}{2},\dots, \frac{m-1}{2}\} \,$ or $\,    \alpha> \frac{m-1}{2}$.
 Then the function $ p^{-\alpha}$ is completely monotone on the spectrahedral cone $C$ and its Riesz measure is the push-forward of the
Riesz measure from the proof of Theorem \ref{thm:det} under the map $L$.
\end{cor}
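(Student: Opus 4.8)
The plan is to deduce the statement from Theorem~\ref{thm:det}, the restriction principle of Remark~\ref{rem:restriction}, and the push-forward formula of Proposition~\ref{prop:push-forward}. Write $V=\mathrm{Sym}_m(\R)$ for the space of symmetric $m\times m$ matrices equipped with the trace inner product, let $C_{\mathrm{pd}}\subset V$ be the open cone of positive definite matrices, and let $\iota\colon\R^n\to V$ be the linear map $x\mapsto x_1A_1+\cdots+x_nA_n$. Since $A_1,\dots,A_n$ are linearly independent, $\iota$ is injective, hence a linear isomorphism onto the subspace $W:=\iota(\R^n)\subseteq V$. By construction $\iota(C)=C_{\mathrm{pd}}\cap W$, which is nonempty by hypothesis, and $p^{-\alpha}=\det(\,\cdot\,)^{-\alpha}\circ\iota$; that is, under the identification of $\R^n$ with $W$ afforded by $\iota$, the function $p^{-\alpha}$ is exactly the restriction of $\det(x)^{-\alpha}$ to $W$.

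First I would invoke Theorem~\ref{thm:det}: for the stated range of $\alpha$, the function $\det(x)^{-\alpha}$ is completely monotone on $C_{\mathrm{pd}}$. By Remark~\ref{rem:restriction} its restriction to $W$ is then completely monotone on $C_{\mathrm{pd}}\cap W$. Transporting along $\iota$ preserves this property: by the chain rule $D_{v_1}\cdots D_{v_k}(g\circ\iota)(x)=(D_{\iota(v_1)}\cdots D_{\iota(v_k)}g)(\iota(x))$, and $\iota$ carries the rays of $C$ into $C_{\mathrm{pd}}$ and $C$ into $C_{\mathrm{pd}}$, so the sign condition \eqref{eq:cm} for $p^{-\alpha}$ on $C$ follows from that for $\det(x)^{-\alpha}$ on $C_{\mathrm{pd}}$. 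Hence $p^{-\alpha}$ is completely monotone on $C$.

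It remains to identify the Riesz measure. Applying Proposition~\ref{prop:push-forward} to the pair $(V,W)$ shows that the Riesz measure of $\det(x)^{-\alpha}|_W$ is $\pi_*\mu$, where $\mu$ is the Riesz measure of $\det(x)^{-\alpha}$ produced in the proof of Theorem~\ref{thm:det} and $\pi\colon V^*\to W^*$ is the projection dual to the inclusion $W\hookrightarrow V$. Unwinding the identifications, $\pi$ is precisely the map $L$ of the statement, namely $Y\mapsto(\mathrm{trace}(YA_1),\dots,\mathrm{trace}(YA_n))$, which is dual to $\iota$ and carries the psd cone onto $C^*$. Therefore the Riesz measure of $p^{-\alpha}$ equals $L_*\mu$ and is supported on $C^*$, as asserted. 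The argument is essentially bookkeeping, so there is no serious obstacle; the one point deserving care is keeping the two cones named ``$C$'' apart — the positive definite cone $C_{\mathrm{pd}}\subset V$ versus the spectrahedral cone $C\subset\R^n$ — and checking that the abstract projection $\pi$ of Proposition~\ref{prop:push-forward} really coincides with the map $L$ named in the statement and sends the psd cone onto $C^*$. Both follow at once from the definition of the dual cone and the self-duality (up to closure) of the psd cone under the trace inner product.
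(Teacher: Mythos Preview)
Your argument is correct and follows exactly the approach the paper intends: the corollary is stated immediately after the discussion that restricting $\det(x)^{-\alpha}$ to a linear subspace of symmetric matrices preserves complete monotonicity (Remark~\ref{rem:restriction}), and the push-forward of the Riesz measure is handled by Proposition~\ref{prop:push-forward}. Your careful bookkeeping distinguishing the two cones and identifying the abstract projection $\pi$ with the map $L$ makes explicit what the paper leaves implicit.
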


Consider the  case when $A_1,\ldots,A_n$ are diagonal matrices.
Here the polynomial $p(x)$ is a product of linear forms, and we are 
in the situation of Section \ref{sec3}. For general symmetric matrices
$A_i$, the fibers $L^{-1}(y)$ are spectrahedra and not polytopes.
If $ \alpha> \frac{m-1}{2}$ then the Riesz kernel exists, and its values are
found by integrating \eqref{eq:psdriesz} over the spectrahedra $L^{-1}(y)$.
In particular, if $\alpha = \frac{m+1}{2}$, then the 
value of the Riesz kernel $q(y)$ equals, up to a constant, the volume of
the spectrahedron $L^{-1}(y)$. The role of the chamber complex
is now played by a nonlinear branch locus. It would be 
desirable to compute explicit formulas, in the spirit of
 Examples \ref{ex:24} and~\ref{ex:35}, for these spectrahedal volume functions.

\begin{exmp}
Let $m=n=3$ and write $C$ for the cone of positive definite matrices 
\begin{equation}
\label{eq:uhlerex}
\begin{pmatrix}
x_1+x_2+x_3 & x_3 & x_2 \\
x_3 & x_1+x_2+x_3 & x_1 \\
x_2 & x_1 & x_1+x_2+x_3
\end{pmatrix}.
\end{equation}
This space of matrices is featured in \cite[Example 1.1]{SU} where it serves the
prominent role of illustrating the convex algebraic geometry of Gaussians.
Let $p(x)$ be the determinant of \eqref{eq:uhlerex}. Cross sections of
the cone $C$ and its dual $C^*$ are shown in the middle and right of \cite[Figure 1]{SU}.
The fibers $L^{-1}(y)$ are the $3$-dimensional spectrahedra shown on the
left in \cite[Figure 1]{SU}. Their volumes are computed by 
the Riesz kernel $q(y)$ for $\alpha=2$.
\end{exmp}

We have seen that determinants of symmetric matrices of linear forms are a natural
class of polynomials $p(x)$ admitting negative powers that are completely monotone.
Which other polynomials have this property? 
The section title reveals the~answer.

\begin{defn}
A  homogeneous polynomial $p\in \R[x_1,\dots,x_n]$ is \emph{hyperbolic} for
a vector $e\in \R^n$ if $p(e)>0$ and, for any $x\in \R^n$,  the univariate polynomial
 $t\mapsto p(t\cdot e-x)$ has only real zeros.  Let $C$ be the connected component of the set
 $\R^n\setminus \{p=0\}$  that contains $e$.
If $p$ is hyperbolic for $e$, then it is hyperbolic for all vectors in $C$.
In that case, $C$  is an open convex cone, called the \emph{hyperbolicity cone} of $p$.
 Equivalently, a homogeneous polynomial $p\in \R[x_1,\dots, x_n]$ is hyperbolic with
  hyperbolicity cone $C$ if and only if $p(z)\neq 0$ for any 
  vector $z$ in the tube domain $C+i\cdot\R^n$ in the complex space $ \C^n$.
  \end{defn}

\begin{exmp}\label{ex:det}
The canonical example of a hyperbolic polynomial is the determinant $p(x) = \det(x)$ of a
symmetric $m\times m$ matrix $x$. Here we take $e$ to be the identity matrix.
Then $C$ is the cone of positive definite symmetric matrices. Hyperbolicity holds because the
roots of $p(t\cdot e-x)$ are the eigenvalues of $x$, and these are all real.
\end{exmp}

\begin{exmp} \label{ex:resthyp}
The restriction of a hyperbolic polynomial $p$ to a linear subspace $V$ is hyperbolic,
provided $V$ intersects the hyperbolicity cone $C$ of $p$.  Therefore the
polynomials in  \eqref{eq:pAAA} are all hyperbolic. In particular,
any product of linear forms $\ell_i$  is hyperbolic, if we take the hyperbolicity cone to be
the polyhedral cone $C$  in \eqref{eq:polyhedralcone}.
\end{exmp}

The following result states that  hyperbolic polynomials 
are precisely the relevant class of polynomials for our study of 
positivity certificates via integral representations.

\begin{thm}\label{thm:cm}
Let $p\in \R[x_1,\dots,x_n]$ be a homogeneous polynomial that is positive 
 on an open convex cone $C$ in $\RR^n$, and such that the power $f = p^{-\alpha}$ 
 is completely monotone on $C$ for some $\alpha> 0$.
 Then $p$ is hyperbolic and its hyperbolicity cone contains $C$.
 \end{thm}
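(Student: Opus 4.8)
\emph{Proof proposal.} The strategy is: (i) use complete monotonicity to produce a holomorphic extension of $f$ to the tube $C+i\RR^n$ and deduce that $p$ has no zeros there; (ii) convert this into hyperbolicity by homogeneity together with the real-rootedness definition.

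For (i), by the Remark following Theorem~\ref{thm:BHWC} the completely monotone function $f=p^{-\alpha}$ extends to a holomorphic function $F$ on the tube $T:=C+i\RR^n$. Near any real point $x_0\in C$ one has $p(x_0)>0$, so the principal branch of $p^{-\alpha}$ is holomorphic in a complex neighborhood $N$ of $x_0$; it agrees with $f$ on $N\cap\RR^n$, hence with $F$ on all of $N$ by the identity theorem. I would then argue that $p$ is zero-free on $T$. Suppose $p(z_1)=0$ for some $z_1=x_1+iv_1\in T$, and consider the path $z(\tau)=x_1+i\tau v_1\in T$, $\tau\in[0,1]$, together with $\phi(\tau):=p(z(\tau))$, a polynomial with $\phi(0)=p(x_1)>0$; let $\tau^\ast\in(0,1]$ be its smallest zero. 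On each compact subarc $z([0,\tau_1])$ with $\tau_1<\tau^\ast$ one can choose a simply connected zero-free neighborhood in $T$ and the branch of $\log p$ there that is principal at $x_1$; then $\exp(-\alpha\log p)$ is holomorphic on it and equals $F$ (identity theorem again), so $|F(z(\tau))|=|\phi(\tau)|^{-\alpha}$ on $[0,\tau^\ast)$. Letting $\tau\uparrow\tau^\ast$ forces $|F(z(\tau))|\to\infty$, since $\alpha>0$ and $\phi(\tau^\ast)=0$, contradicting continuity of $F$ at the point $z(\tau^\ast)\in T$. Hence $p\neq0$ on $T$.

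For (ii), write $d=\deg p$; homogeneity gives $p(iz)=i^{d}p(z)$, so $p$ is also zero-free on $\{iz:z\in T\}=\RR^n+iC$. Fix $e\in C$; then $p(e)>0$, so for every $x\in\RR^n$ the univariate polynomial $t\mapsto p(te-x)$ has degree exactly $d$. If it had a non-real root, then after replacing that root by its complex conjugate if necessary we obtain a root $a+ib$ with $b>0$, and $p\bigl((ae-x)+i(be)\bigr)=0$ with $ae-x\in\RR^n$ and $be\in C$, contradicting zero-freeness on $\RR^n+iC$. Thus $t\mapsto p(te-x)$ is real-rooted for every $x$, i.e.\ $p$ is hyperbolic for $e$. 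Since $C$ is a connected subset of $\RR^n\setminus\{p=0\}$ containing $e$, the hyperbolicity cone, being the connected component of $\RR^n\setminus\{p=0\}$ through $e$, contains all of $C$.

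The delicate point is the zero-freeness step in (i): because $\alpha$ need not be an integer, $p^{-\alpha}$ is a priori multivalued, so one cannot simply equate $F$ with $p^{-\alpha}$ globally on $T$. One must instead propagate the single-valued positive branch along the path starting from a real point, which is what forces the use of simply connected neighborhoods and the identity theorem; this is where the argument needs care, while everything else is formal.
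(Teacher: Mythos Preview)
Your proof is correct. The paper itself does not give a self-contained argument here; it simply cites \cite[Corollary~2.3]{scott2014} together with \cite[Theorem~3.3]{MSUZ2016}. Your route---holomorphic extension of $f$ to the tube $C+i\RR^n$ via the Bernstein--Hausdorff--Widder--Choquet representation, then a continuation-along-a-path argument to rule out zeros of $p$ in the tube---is exactly the mechanism behind those cited results, so in substance you have reconstructed the standard proof rather than found an alternative. One small redundancy: the paper's Definition of hyperbolicity already records the equivalence ``$p$ hyperbolic with hyperbolicity cone $C$ $\Longleftrightarrow$ $p\neq 0$ on $C+i\RR^n$,'' so once step~(i) is done you could invoke that equivalence directly; your step~(ii) is effectively re-proving one direction of it, which is fine but not strictly necessary. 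Your caveat about the multivaluedness of $p^{-\alpha}$ and the need to propagate a single branch along a path is well placed---that is indeed the only point requiring care.
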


\begin{proof}
This follows from \cite[Corollary 2.3]{scott2014} as explained in \cite[Theorem 3.3]{MSUZ2016}.
\end{proof}

A hyperbolic polynomial $p$ with hyperbolicity cone $C \subset \RR^n$ is called
\emph{complete} if $C$ is pointed, that is, the dual cone $C^*$ is $n$-dimensional.
 In the polyhedral setting of Proposition \ref{prop:pushdirichlet},
 this was precisely the condition for the Riesz kernel to exist. This fact generalizes.
The following important result due to G{\aa}rding \cite[Theorem 3.1]{garding1951}
should  give a complex integral representation of Riesz kernels for 
arbitrary hyperbolic polynomials.

\begin{thm}[G{\aa}rding]\label{thm:Riesz}
Let $p$ be a complete hyperbolic polynomial with hyperbolicity cone $C \subset \RR^n$.
Fix a vector $e \in C$ and  a complex number $\alpha$ with ${\rm Re}(\alpha)>n$, and define
  \begin{align}\label{eq:Riesz}
  \qquad  q_{\alpha}(y) \,\,=\,\, (2\pi)^{-n} \int_{\R^n} p(e+i\cdot x)^{-\alpha} 
  e^{\langle y , e+i\cdot x \rangle}\,dx
     \quad   \qquad \hbox{for}\,\,\, y\in \R^n,
  \end{align}
where $p(e+i\cdot x)^{-\alpha} = e^{-\alpha(\log|p(e+i\cdot x)|+i\arg p(e+i\cdot x))}$.
  Then $q_{\alpha}(y)$ is independent of the choice of $e\in C$ and vanishes when $y\notin C^*$.
If  $\,{\rm Re}(\alpha)>n+k$ then $q_\alpha(y)$ has continuous derivatives of order $k$ as a function of $y$, and is analytic in $\alpha$ for fixed $y$.   Moreover, 
 \begin{equation}
 \label{eq:Riesz1}
  p(x)^{-\alpha} \,\,= \,\,\int_{C^*} e^{-\langle y,x \rangle} q_\alpha(y)\,dy \qquad \quad
  \hbox{for}\,\,\, x\in C. \qquad
 \end{equation}
 \end{thm}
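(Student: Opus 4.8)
The plan is to treat \eqref{eq:Riesz} as the inverse Laplace transform of $p^{-\alpha}$, written as a Fourier integral along the contour $e+i\R^n$ inside the tube $C+i\R^n$, so that \eqref{eq:Riesz1} becomes the matching Laplace representation; every assertion then reduces to analytic control of the integrand $p(e+ix)^{-\alpha}e^{\langle y,\,e+ix\rangle}$. \emph{Convergence.} Set $d=\deg(p)$. Hyperbolicity factors $p(se-x)=p(e)\prod_{j=1}^{d}(s-r_j(x))$ with real roots $r_j(x)$ homogeneous of degree $1$ in $x$, whence $|p(e+ix)|=|p(e)|\prod_{j=1}^{d}\sqrt{1+r_j(x)^2}$. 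If all $r_j$ vanished at some unit vector $\hat x$, then $p$ would be constant in the direction $\hat x$ on $\mathrm{span}(e,\hat x)$, forcing the line $\R\hat x$ into $\overline C$ and contradicting pointedness of $C$; this is the one place where \emph{completeness} of $p$ is used. Hence, by continuity of roots and compactness of the sphere, $\max_j|r_j|$ is bounded below there, so $|p(e+ix)|\ge c_e(1+|x|)$ with $c_e$ locally uniform in $e\in C$, and since $|e^{\langle y,\,e+ix\rangle}|=e^{\langle y,e\rangle}$ the integral \eqref{eq:Riesz} converges absolutely when $\mathrm{Re}(\alpha)>n$, locally uniformly in $(y,\alpha)$.

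\emph{Independence of $e$, support, and regularity.} On the simply connected tube $C+i\R^n$ the polynomial $p$ is zero-free, so the branch of $p^{-\alpha}$ normalized by $\arg p(e)=0$ is single-valued holomorphic there and $p(z)^{-\alpha}e^{\langle y,z\rangle}\,dz_1\wedge\cdots\wedge dz_n$ is a closed holomorphic $n$-form; shifting the contour from $e+i\R^n$ to $e'+i\R^n$ across $[e,e']+i\R^n$ — legitimate by the decay of the previous step since $\mathrm{Re}(\alpha)>n$ — shows $q_\alpha$ is independent of $e$. If $y\notin C^*$, I would choose $v\in C$ with $\langle y,v\rangle<0$ and use the base point $e+tv\in C$: homogeneity gives $p(e+tv+ix)=t^{d}p\!\left(v+e/t+ix/t\right)$, so the convergence estimate yields $|q_\alpha(y)|\le C\,e^{\langle y,e\rangle}\,t^{\,n-d\,\mathrm{Re}(\alpha)}\,e^{t\langle y,v\rangle}\to 0$ as $t\to\infty$, hence $q_\alpha(y)=0$. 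Differentiating \eqref{eq:Riesz} under the integral, each $\partial_{y_j}$ inserts a factor $e_j+ix_j=O(|x|)$, and by the same estimate the integral stays absolutely convergent as long as at most $k$ derivatives are taken with $k<\mathrm{Re}(\alpha)-n$, giving $\mathcal C^{k}$-regularity when $\mathrm{Re}(\alpha)>n+k$; for fixed $y$, $\partial_\alpha$ inserts $-\log p(e+ix)$, of at most logarithmic growth by the factorization above, so $q_\alpha(y)$ is holomorphic in $\alpha$ on $\{\mathrm{Re}(\alpha)>n\}$.

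\emph{The representation \eqref{eq:Riesz1}.} Using independence of $e$ with the rescaled base point $\lambda e$ and the convergence estimate, one gets $|q_\alpha(y)|\le C\,\lambda^{\,n-d\,\mathrm{Re}(\alpha)}\,e^{\lambda\langle y,e\rangle}$ for every $\lambda>0$; optimizing over $\lambda$ shows $q_\alpha$ has at most polynomial growth on $C^*$. Since $e\in C$ lies in the interior of $C^{**}=\overline C$, we have $\langle y,e\rangle\gtrsim|y|$ on $C^*$, so $y\mapsto q_\alpha(y)e^{-\langle y,e\rangle}$ is integrable. Now \eqref{eq:Riesz} says exactly that this function is the inverse Fourier transform of $x\mapsto p(e+ix)^{-\alpha}$, which lies in $L^{1}(\R^n)\cap\mathcal C^{0}$ by the first step; Fourier inversion then gives $p(e+ix)^{-\alpha}=\int_{\R^n}q_\alpha(y)\,e^{-\langle y,\,e+ix\rangle}\,dy$ for all $x\in\R^n$, and at $x=0$ this reads $p(e)^{-\alpha}=\int_{\R^n}q_\alpha(y)\,e^{-\langle y,e\rangle}\,dy$. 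Because $q_\alpha$ does not depend on $e$, the identity holds with $e$ replaced by any point of $C$, and together with the vanishing of $q_\alpha$ off $C^*$ this is precisely \eqref{eq:Riesz1}.

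The main obstacle is the analytic bookkeeping gathered in the first and third steps: the uniform lower bound $|p(e+ix)|\gtrsim 1+|x|$, in which pointedness of $C$ is genuinely indispensable — without it $q_\alpha$ need not be an honest function — and the polynomial growth bound for $q_\alpha$ on $C^*$, which is what makes both the contour shift and the Fourier inversion rigorous. Granting these estimates, independence of $e$, the support statement, the regularity and analyticity in $\alpha$, and the Laplace representation \eqref{eq:Riesz1} all follow by routine manipulation of absolutely convergent integrals.
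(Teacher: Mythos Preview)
The paper does not give its own proof of this theorem: it is stated as G{\aa}rding's result and attributed to \cite[Theorem~3.1]{garding1951}, with no argument supplied. So there is nothing in the paper to compare your proposal against; what you have written is effectively a reconstruction of the classical proof.

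Your reconstruction is essentially correct and follows the same outline as G{\aa}rding's original: the factorization $|p(e+ix)|=|p(e)|\prod_j\sqrt{1+r_j(x)^2}$ together with pointedness of $C$ to get the $(1+|x|)^{-1}$ decay, a contour shift in the tube $C+i\R^n$ for independence of $e$, pushing the base point to infinity along $v\in C$ with $\langle y,v\rangle<0$ for the support statement, differentiation under the integral for regularity, and Fourier inversion for \eqref{eq:Riesz1}. The one place to be slightly more careful is the completeness step: the roots $r_j(x)$ are only a multiset, not individual continuous functions, but the quantity you actually use, $\max_j|r_j(x)|$, is a well-defined continuous and positively $1$-homogeneous function of $x$, so the compactness argument on $\mathbb{S}^{n-1}$ goes through as written. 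The growth bound $|q_\alpha(y)|\le C'\langle y,e\rangle^{d\,\mathrm{Re}(\alpha)-n}$ obtained by optimizing over $\lambda$ is exactly what is needed to put $q_\alpha(y)e^{-\langle y,e\rangle}$ in $L^1$ and justify the inversion. In short: your proof is sound, and it supplies what the paper merely cites.
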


The function $q_\alpha(y)$ looks like a Riesz kernel for $f = p^{-\alpha}$.
However, it might lack one crucial property: we do not yet
know whether $q_\alpha(y) $ is nonnegative for $y \in C^*$.
If this holds then $f = p^{-\alpha}$ is completely monotone,
\eqref{eq:Riesz} gives a formula for the Riesz kernel of $f$, and
\eqref{eq:Riesz1} is the integral representation of $f$ that is promised
in Theorem~\ref{thm:BHWC}.

\begin{rem}\label{rem:Riesz}
The condition ${\rm Re}(\alpha)>n$ is only sufficient for $q_\alpha$
to be a well-defined function. However, for any hyperbolic  $p$
and any $\alpha \in \C$, the formula \eqref{eq:Riesz} defines a distribution on $C^*$;
see \cite[Section 4]{ABG1973}. If nonnegative, then this is the Riesz measure.
\end{rem}
 
 The following conjecture  is important for 
statistical applications of the models in~\cite{MSUZ2016}.

\begin{conj}[Conjecture $3.5$ in \cite{MSUZ2016}]\label{conj:MSUZ}
Let $p\in \R[x_1,\dots,x_n]$ be a complete hyperbolic polynomial with hyperbolicity cone $C$. Then there exists a real $\alpha >n$ such that $q_{\alpha}(y)$ in \eqref{eq:Riesz} is nonnegative on 
the dual cone $C^*$. In particular, $q_\alpha(y)$ is a Riesz kernel.
\end{conj}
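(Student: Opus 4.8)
Here is a plan toward Conjecture~\ref{conj:MSUZ}; it would settle the general case if one technical point about the boundary of $C^*$ can be controlled, and it is exactly that point which is made explicit for elementary symmetric $p$ in Section~\ref{sec6}. The idea is to prove that the G{\aa}rding kernel $q_\alpha$ of \eqref{eq:Riesz} is nonnegative on $C^*$ for \emph{all sufficiently large} real $\alpha$, by a saddle‑point analysis of the defining integral as $\alpha\to\infty$. Write $d=\deg p$. Three standard facts about a complete hyperbolic polynomial $p$ with hyperbolicity cone $C\subset\R^n$ enter. (a)~The gradient map $\nabla\log p\colon C\to\mathrm{int}(C^*)$ is a diffeomorphism, homogeneous of degree $-1$; equivalently $-\log p$ is a logarithmically homogeneous self‑concordant barrier for $C$. (b)~For every $x\in C$ the symmetric matrix $Q_x:=-\mathrm{Hess}\log p(x)$ is positive definite: its quadratic form is $u\mapsto\sum_{j=1}^d\mu_j(u)^2$, where $\mu_1(u),\dots,\mu_d(u)$ are the roots of $t\mapsto p(tx-u)$, and this vanishes only for $u$ in the lineality space of $p$, which is $\{0\}$ since $p$ is complete. (c)~For every $x\in C$ and $u\in\R^n$ one has $|p(x+iu)|\geq p(x)$, with equality only at $u=0$, because $|p(x+iu)|^2=p(x)^2\prod_{j=1}^d\bigl(1+\mu_j(u)^2\bigr)$.

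Fix $y\in\mathrm{int}(C^*)$, put $w=w(y):=(\nabla\log p)^{-1}(y)\in C$, and for $\alpha>0$ set $z_\ast=z_\ast(y,\alpha):=\alpha\,w\in C$; by the homogeneity in (a), $z_\ast$ is the unique critical point of $z\mapsto-\alpha\log p(z)+\langle y,z\rangle$, i.e.\ $\alpha\nabla\log p(z_\ast)=y$. Applying Theorem~\ref{thm:Riesz} with base point $e:=z_\ast$ gives
\[
  q_\alpha(y)\;=\;(2\pi)^{-n}e^{\langle y,z_\ast\rangle}\int_{\R^n}p(z_\ast+iu)^{-\alpha}\,e^{i\langle y,u\rangle}\,du .
\]
Expanding $\log p$ at $z_\ast$ and using the critical‑point identity, the term linear in $u$ in the exponent cancels exactly, so near $u=0$ the integrand equals $p(z_\ast)^{-\alpha}e^{-\frac{\alpha}{2}u^{T}Q_{z_\ast}u}\bigl(1+o(1)\bigr)$ with $Q_{z_\ast}\succ0$ by (b), while by (c) the contribution of $\{|u|\geq\delta\}$ is exponentially smaller than that of a neighborhood of $0$. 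Laplace's method then yields, for fixed $y$ as $\alpha\to\infty$,
\[
  q_\alpha(y)\;=\;\frac{e^{\langle y,z_\ast\rangle}\,p(z_\ast)^{-\alpha}}{(2\pi)^{n/2}\alpha^{n/2}\sqrt{\det Q_{z_\ast}}}\,\bigl(1+o(1)\bigr)
  \;=\;\frac{\alpha^{n/2}}{(2\pi)^{n/2}\sqrt{\det Q_{w}}}\;\alpha^{-d\alpha}\,e^{\alpha(d-\log p(w))}\,\bigl(1+o(1)\bigr),
\]
where the second equality uses $z_\ast=\alpha w$, the relation $\langle y,w\rangle=\langle\nabla\log p(w),w\rangle=d$, and the degree‑$(-2)$ homogeneity of $\mathrm{Hess}\log p$. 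The prefactor is \emph{strictly positive}, so $q_\alpha(y)>0$ once $\alpha$ is large, uniformly for $y$ in any fixed compact subset of $\mathrm{int}(C^*)$. (For $p=\det$ this recovers the large‑$\alpha$ behaviour of \eqref{eq:psdriesz}, and for a product of linear forms that of Proposition~\ref{prop:LapTrProdxi}.)

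It remains to upgrade this to: \emph{one} exponent $\alpha_0>n$ works for \emph{all} of $C^*$. Since, for $\alpha>n$, $q_\alpha$ is continuous on $\R^n$ and vanishes off $C^*$ (Theorem~\ref{thm:Riesz}), and since $q_\alpha(ty)=t^{d\alpha-n}q_\alpha(y)$ for $t>0$, it suffices to produce a single $\alpha_0>n$ with $q_{\alpha_0}\geq0$ on the compact cross‑section $K=\{y\in C^*:\langle y,e\rangle=1\}$ for a fixed $e\in\mathrm{int}(C)$. On compact subsets of $\mathrm{int}(K)$ the asymptotic above is uniform, but it degenerates as $y\to\partial K=K\cap\partial C^*$: there $w(y)$ escapes to infinity in $C$, the Hessian $Q_{z_\ast}$ collapses, and the Laplace remainder is no longer uniform — even though the positive leading term still tends to $0$ at $\partial K$, so that winning the estimate there requires a quantitative error bound tracking the moving saddle $z_\ast=\alpha w(y)$ through the ratio of the cubic to the quadratic part of $\log p$ at $z_\ast$. \emph{This boundary uniformity is the main obstacle.} Two complementary routes around it: an induction on $n$ — by Remark~\ref{rem:restriction} the restriction of $p$ to a supporting hyperplane of a face of $C$ is again a hyperbolic polynomial in fewer variables, and the boundary values of $q_\alpha$ along a facet of $C^*$ should be the Riesz kernels of these ``face polynomials''; and the convolution semigroup $q_{\alpha+\beta}=q_\alpha\ast q_\beta$ on $C^*$, immediate from $p^{-\alpha}p^{-\beta}=p^{-(\alpha+\beta)}$, the uniqueness in Theorem~\ref{thm:BHWC}, and $C^*+C^*=C^*$, which (reading $q_{\alpha_0}$ as a distribution for small $\alpha_0$, cf.\ Remark~\ref{rem:Riesz}) reduces the whole conjecture to exhibiting a \emph{single} $\alpha_0>0$, however small, for which $q_{\alpha_0}$ is a nonnegative measure. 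Either way one is forced to understand $q_\alpha$ near $\partial C^*$, where its behaviour is dictated by the singularities of $\{p=0\}$ along the boundary of the hyperbolicity cone rather than by the smooth‑interior picture above.
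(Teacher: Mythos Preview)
This statement is an \emph{open conjecture}; the paper does not prove it in general, and neither does your proposal. What the paper actually establishes is the special case $p=E_{m,n}$ (Theorem~\ref{thm:elem}), and it does so by a route entirely unrelated to your saddle-point analysis of G{\aa}rding's integral. The paper's argument is algebraic and inductive: it uses the recursion $E_{m,n}=E_{m,n-1}+x_nE_{m-1,n-1}$, Scott--Sokal's transfer lemma (Lemma~\ref{lem:rowno}) to trade the extra variable $x_n$ for an exponential factor, a substitution lemma (Lemma~\ref{lem:podstaw}) that composes completely monotone functions along a shared variable, and a multiplicative splitting \eqref{eq:wnuk} that reduces to $E_{2,3}$ and lower-degree $E_{m-1,n-1}$. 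Complete monotonicity is established directly, and the Riesz kernel is obtained as a by-product via the integral formulas in Lemma~\ref{lem:podstaw} and Proposition~\ref{prop:conv}; the function $q_\alpha$ from \eqref{eq:Riesz} is never analysed asymptotically.

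Your Laplace-method outline is coherent on the interior of $C^*$, and the gap you name---uniformity of the asymptotic as $y\to\partial C^*$---is exactly the genuine obstruction, not a technicality. As $y$ approaches a boundary face, the saddle $w(y)$ escapes to infinity in $C$, the Hessian $Q_{w}$ degenerates in the directions tangent to that face, and the ratio of the cubic remainder to the quadratic term in the Laplace expansion is no longer controlled; without an a~priori bound there is no single $\alpha_0$ that works across the whole cross-section. Your two proposed remedies each have their own difficulty. The ``induction on faces'' idea requires identifying the boundary behaviour of $q_\alpha$ along a facet of $C^*$ with the Riesz kernel of a lower-dimensional hyperbolic polynomial, but the restriction of $p$ to a supporting hyperplane of $C$ is not in general hyperbolic (the hyperplane may be tangent to $\{p=0\}$), so the induction does not close without further input. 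The convolution-semigroup reduction is correct as stated---$q_{\alpha+\beta}=q_\alpha*q_\beta$ and hence nonnegativity at one $\alpha_0>0$ propagates to all integer multiples---but it merely restates the conjecture: producing \emph{any} $\alpha_0$ with $q_{\alpha_0}\geq 0$ is precisely what is not known for general hyperbolic $p$.
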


Conjecture~\ref{conj:MSUZ} holds true for all hyperbolic polynomials
that admit a symmetric determinantal representation, by Corollary \ref{cor:A1An}.
This raises the question whether such a representation exists for every hyperbolic 
polynomial. The answer is negative.

\begin{exmp}[$n=4$]Consider the {\em specialized V\'amos polynomial} in
\cite[Section~3]{Kum}: 
$$ p(x) \,\,=\,\,
x_1^2 x_2^2 \,+\, 4 (x_1 + x_2 + x_3 + x_4)( x_1x_2x_3 + 
x_1x_2x_4 + x_1x_3x_4 + x_2x_3x_4 ). $$
It is known that no power of $p(x)$ admits a symmetric determinantal representation.
\end{exmp}

% Elementary symmetric polynomials are hyperbolic, and they share the same property.

\begin{exmp}\label{ex:elem}
For any $m=0,1,\dots,n$ the $m$th elementary symmetric polynomial $E_{m,n}(x) = \sum_{1\leq i_1<\dots<i_m\leq n} x_{i_1}\cdots x_{i_m}$ is hyperbolic with respect to $e=(1,\dots,1)\in \R^n$.
The hyperbolicity cone of $E_{m,n}$  contains the orthant $\R^n_{>0}$. It is known that
$E_{m,n}$ has no symmetric determinantal representation when
$2 \leq m \leq n-2$, see \cite[Example 5.10]{KPV} for a proof.
Exponential varieties associated
with $E_{m,n}$ are studied in \cite[Section~6]{MSUZ2016}.
\end{exmp}

In Section \ref{sec6} we 
prove Conjecture \ref{conj:MSUZ} for all
elementary symmetric polynomials.
This is  nontrivial because 
Corollary \ref{cor:A1An} does not apply to $E_{m,n}$
when $2 \leq m \leq n-2$.
Scott and Sokal \cite{scott2014} gave a conjectural description
of the   set of parameters~$\alpha>0$ for which the function $E_{m,n}^{-\alpha}$
 is completely monotone on $\R_{>0}^n$. For a warm-up see Example~\ref{ex:intro}.

\begin{conj}[Conjecture $1.11$ in \cite{scott2014}]\label{conj:SS}
Let $2 \leq m \leq n$. Then $E_{m,n}^{-\alpha}$ is completely monotone on the 
positive orthant $\R_{>0}^n$ if and only if $\alpha= 0$ or $\alpha \geq (n-m)/2$.
\end{conj}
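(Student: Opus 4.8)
I will treat the two implications of the equivalence separately: the ``only if'' direction --- that complete monotonicity of $E_{m,n}^{-\alpha}$ forces $\alpha=0$ or $\alpha\ge(n-m)/2$ --- is the one I expect to establish in full, while for the ``if'' direction I anticipate reaching only sufficiently negative exponents. So assume first that $E_{m,n}^{-\alpha}$ is completely monotone on $\R^n_{>0}$ with $\alpha>0$. The plan is to reduce to the quadratic case $m=2$ by two specialization operations that preserve complete monotonicity. First, if $f$ is completely monotone on $\R^n_{>0}$ and $c>0$, then $(x_1,\dots,x_{n-1})\mapsto f(x_1,\dots,x_{n-1},c)$ is completely monotone on $\R^{n-1}_{>0}$, since the sign conditions \eqref{eq:cm0} at a point $(x',c)$ only involve derivatives in the first $n-1$ coordinate directions. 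Together with the identity $E_{m,n}(x',x_n)=E_{m,n-1}(x')+x_n\,E_{m-1,n-1}(x')$, this shows $E_{m,n}(x',c)^{-\alpha}$ is completely monotone on $\R^{n-1}_{>0}$ for every $c>0$. Letting $c\to0^+$ (using that locally uniform limits of completely monotone functions are completely monotone) recovers $E_{m,n-1}^{-\alpha}$; replacing $E_{m,n}(x',c)^{-\alpha}$ by its positive multiple $c^{\alpha}E_{m,n}(x',c)^{-\alpha}=\bigl(E_{m-1,n-1}(x')+E_{m,n-1}(x')/c\bigr)^{-\alpha}$ and letting $c\to\infty$ recovers $E_{m-1,n-1}^{-\alpha}$. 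Iterating the second operation $m-2$ times reduces the claim to the statement: \emph{if $E_{2,N}^{-\alpha}$ is completely monotone on $\R^N_{>0}$ with $\alpha>0$, then $\alpha\ge(N-2)/2$}, where $N:=n-m+2$, so that $(N-2)/2=(n-m)/2$.

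For this quadratic base case, the key point is that $2E_{2,N}(x)=(x_1+\dots+x_N)^2-(x_1^2+\dots+x_N^2)$ is a quadratic form of Lorentzian signature $(1,N-1)$, whose hyperbolicity cone with respect to $e=(1,\dots,1)$ is the pointed forward light cone $\Lambda\supseteq\R^N_{>0}$; hence $E_{2,N}$ is a complete hyperbolic polynomial, and after a linear change of coordinates $E_{2,N}^{-\alpha}$ is a positive multiple of $q^{-\alpha}$ for the standard Lorentz form $q$. By the classical theory of Riesz distributions of Lorentzian forms (M.\ Riesz; G{\aa}rding \cite{garding1951}), the distribution $q_\alpha$ of Theorem~\ref{thm:Riesz} and Remark~\ref{rem:Riesz} attached to $E_{2,N}^{-\alpha}$ is a nonnegative measure only when $\alpha=0$ or $\alpha\ge(N-2)/2$; for $0<\alpha<(N-2)/2$ it is a genuine distribution of positive order --- an iterated wave operator applied to a surface measure on the boundary of the dual cone $\Lambda^*$ at the points $(N-2)/2-1,(N-2)/2-2,\dots$ in that range, and a non-positive homogeneous distribution in between. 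I would then close the argument by uniqueness: if $E_{2,N}^{-\alpha}$ were completely monotone on $\R^N_{>0}$, Theorem~\ref{thm:BHWC} would furnish a genuine nonnegative measure $\mu$ on $(\R^N_{>0})^*=\R^N_{\ge0}$ representing it; but by Theorem~\ref{thm:Riesz} and Remark~\ref{rem:Riesz} the same function $E_{2,N}^{-\alpha}$ is the Laplace transform of $q_\alpha$, which is supported on $\Lambda^*\subseteq\R^N_{\ge0}$, and injectivity of the Laplace transform on distributions supported in a pointed cone forces $\mu=q_\alpha$ --- contradicting its non-positivity. Making this orthant-versus-light-cone passage precise, together with the classical non-positivity input itself, is what I expect to be the main obstacle on the necessity side.

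For the converse I would use G{\aa}rding's formula \eqref{eq:Riesz} directly. Since $\log E_{m,n}$ is concave on the hyperbolicity cone, for large $\alpha$ the integrand $E_{m,n}(e+i x)^{-\alpha}e^{\langle y,\,e+i x\rangle}$ concentrates near a real critical point, and a saddle-point estimate should show that $q_\alpha(y)$ equals a positive Gaussian-type main term plus a relatively lower-order correction, hence is nonnegative once $\alpha$ is large enough; alternatively one can assemble $q_\alpha$ inductively from $E_{m,n}(x',x_n)=E_{m,n-1}(x')+x_nE_{m-1,n-1}(x')$ via an iterated Beta-integral identity, which is the constructive route hinted at in the introduction. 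This yields complete monotonicity of $E_{m,n}^{-\alpha}$ for all sufficiently negative exponents, and in particular Conjecture~\ref{conj:MSUZ} for $E_{m,n}$. Pushing positivity of $q_\alpha$ all the way down to the sharp threshold $\alpha=(n-m)/2$ is the part I do not expect to reach by these methods: near the boundary of $C^*$ the Riesz kernel is no longer controlled by the Gaussian main term, so a genuinely non-asymptotic argument would be required, and that direction of the conjecture is left open.
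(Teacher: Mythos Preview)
Your necessity argument is correct and essentially identical to the paper's Theorem~6.4: both reduce from $E_{m,n}$ to $E_{m-1,n-1}$ by sending one variable to infinity (your rescaled limit $c\to\infty$ is exactly the content of \cite[Lemma~3.1]{scott2014} that the paper invokes), iterate down to $m=2$, and cite the Lorentzian base case from \cite[Corollary~1.10]{scott2014}. Your worry about the ``orthant-versus-light-cone passage'' is already handled by the paper's Proposition~\ref{prop:subcone}, which is the injectivity-of-Laplace-transform argument you sketch; so that step is not an obstacle.

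For sufficiency at large $\alpha$ your proposal diverges from the paper. The paper (Theorem~\ref{thm:elem}) proceeds by an explicit inductive construction: writing $E_{m,n}=E_{m,n-1}+x_nE_{m-1,n-1}$, applying Lemma~\ref{lem:rowno} to reduce to complete monotonicity of $E_{m-1,n-1}^{-\alpha}e^{-tE_{m,n-1}/E_{m-1,n-1}}$, and then using the factorization
\[
E_{m-1,n-1}^{-(n-1)\alpha}\,e^{-tmE_{m,n-1}/E_{m-1,n-1}}\;=\;\prod_{i=1}^{n-1} E_{m-1,n-1}^{-\alpha}\,e^{-tQ_i/E_{m-1,n-1}}
\]
together with Lemma~\ref{lem:podstaw} to verify each factor via the induction hypothesis and the known $E_{2,3}$ case. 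This is constructive: it actually produces the Riesz kernel as an iterated integral (Example~\ref{ex:explicit}). Your saddle-point route is a genuinely different, non-constructive strategy; in principle it should give positivity of $q_\alpha(y)$ for $\alpha$ large, but the uniformity in $y$ over all of $C^*$ (especially near $\partial C^*$, where the Hessian of $\log E_{m,n}$ degenerates) would need real work, and you would not get the explicit kernel. Your ``iterated Beta-integral'' alternative is in spirit the paper's approach, but as stated it is too vague to compare. Both you and the paper leave the sharp threshold $\alpha=(n-m)/2$ open.
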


Scott and Sokal settled Conjecture \ref{conj:SS} for $m=2$. However,
they ``have been unable to find a proof of either the necessity or the sufficiency'' in general, as they stated in \cite[page~334]{scott2014}.
In Section \ref{sec6} we prove the only if direction of Conjecture~\ref{conj:SS}.

\begin{rem} 
The positive orthant $\R_{>0}^n$ is strictly contained in the hyperbolicity cone of
the elementary symmetric polynomial $E_{m,n}$ unless $m=n$. The next proposition ensures that we can replace $\R_{>0}^n$ by the
hyperbolicity cone in the formulation of Conjecture \ref{conj:SS}.
\end{rem}

\begin{prop}\label{prop:subcone}
Let $p$ be a hyperbolic polynomial with hyperbolicity cone $C$ and let 
$K \subset C$ be an open convex subcone. Then, for any fixed $\alpha>0$,
 the function $p^{-\alpha}$ is completely monotone on $C$ if and only if it is completely monotone on the subcone~$K$.
\end{prop}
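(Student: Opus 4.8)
The plan is to prove the two implications of Proposition~\ref{prop:subcone} separately, with the forward direction being immediate and the reverse direction carrying the real content. For the ``only if'' direction, suppose $p^{-\alpha}$ is completely monotone on $C$. Since $K$ is an open convex subcone of $C$, every vector $v \in K$ lies in $C$, and every point $x \in K$ lies in $C$; hence the defining inequalities \eqref{eq:cm} for complete monotonicity on $K$ are a subset of those already known to hold on $C$. So $p^{-\alpha}$ is trivially completely monotone on $K$. (One does not even need $K$ to be a subcone here — any open subset of $C$ would do — but the subcone hypothesis is what makes the converse meaningful.)

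For the ``if'' direction, the key point is that a completely monotone function on $K$ automatically extends, via the integral representation and analyticity, to all of $C$. First I would invoke G\aa rding's Theorem~\ref{thm:Riesz}: because $p$ is hyperbolic with hyperbolicity cone $C$, the distribution $q_\alpha$ defined by \eqref{eq:Riesz} depends only on $p$ and $C$, not on any subcone, and by Remark~\ref{rem:Riesz} it gives a candidate Riesz measure supported on $C^*$ with $p(x)^{-\alpha} = \int_{C^*} e^{-\langle y,x\rangle} q_\alpha(y)\,dy$ for $x \in C$. The content of ``$p^{-\alpha}$ is completely monotone on $K$'' is, by Theorem~\ref{thm:BHWC} applied to the cone $K$, that $p^{-\alpha}$ restricted to $K$ is the Laplace transform of a genuine (nonnegative) Borel measure $\nu$ on $K^* \supseteq C^*$. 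The plan is then to show $\nu$ and the G\aa rding distribution $q_\alpha\,dy$ agree, and in particular that $q_\alpha$ is nonnegative, so that by Theorem~\ref{thm:BHWC} (the ``if'' direction, which only requires the integral representation) $p^{-\alpha}$ is completely monotone on all of $C$.

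The mechanism for matching the two representations is uniqueness of Laplace transforms. For $\mathrm{Re}(\alpha) > n$, both $\int_{C^*} e^{-\langle y,x\rangle} q_\alpha(y)\,dy$ and $\int_{K^*} e^{-\langle y,x\rangle}\,d\nu(y)$ equal $p(x)^{-\alpha}$ for $x$ in the open cone $K$ — the first by G\aa rding, the second by hypothesis. Since the Laplace transform of a measure (or tempered distribution) supported in a closed cone is determined by its values on an open cone, the two measures coincide; hence $q_\alpha\,dy = \nu \geq 0$ and $\nu$ is supported on $C^*$. Then \eqref{eq:Riesz1} exhibits $p(x)^{-\alpha}$ for all $x \in C$ as the Laplace transform of the nonnegative measure $\nu$, so $p^{-\alpha}$ is completely monotone on $C$ by the easy direction of Theorem~\ref{thm:BHWC}. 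For general real $\alpha > 0$ (where $\mathrm{Re}(\alpha) > n$ may fail), I would reduce to the previous case by differentiation: if $p^{-\alpha}$ is completely monotone on $K$ then so is $p^{-\alpha} \cdot p^{-N} \cdot (\text{positive constant})$... more cleanly, one uses that $p^{-\alpha}$ completely monotone on $K$ implies $p^{-\alpha - N}$ completely monotone on $K$ for large integer $N$ (multiply the representing measure's density appropriately, or apply the classical fact that $C^\infty$ completely monotone functions are closed under multiplication by completely monotone functions such as $p^{-N}$, which is already known completely monotone on $C$ when $p$ has a determinantal representation — but in general one instead runs the argument for $\alpha + N$ with $N$ large enough that $\mathrm{Re}(\alpha+N) > n$, concludes complete monotonicity of $p^{-\alpha-N}$ on $C$, and then recovers $p^{-\alpha}$ on $C$ by analyticity in $\alpha$ of the G\aa rding distribution together with the already-known nonnegativity of $q_\beta$ for $\beta$ in a half-line).

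The main obstacle is the last step — handling $\alpha \leq n$ — since G\aa rding's explicit integral \eqref{eq:Riesz} only converges for $\mathrm{Re}(\alpha) > n$, and for small $\alpha$ one only has a distribution. The cleanest route, and the one I would pursue, is: complete monotonicity of $p^{-\alpha}$ on $K$ gives (Theorem~\ref{thm:BHWC}) a nonnegative measure $\nu$ on $K^*$; multiplying its Laplace-transform identity by $p(x)^{-N}$ for integer $N > n$ and using that $p^{-N}$ is the Laplace transform of the nonnegative G\aa rding kernel $q_N$ on $C^* \subseteq K^*$ (valid since $\mathrm{Re}(N) > n$), one gets $p^{-\alpha - N}$ on $K$ as a Laplace transform of the convolution $\nu * (q_N\,dy)$, which is a nonnegative measure supported in $K^* + C^* = C^*$... wait, $K^* \supseteq C^*$ so this needs care; the correct statement is that convolution lands in $\mathrm{cl}(K^* + C^*)$, and since $C \subseteq K$ forces $K^* \subseteq C^*$, not the reverse — so in fact $\nu$ is a priori only known on $K^* \supseteq C^*$, and the support-refinement to $C^*$ comes precisely from the uniqueness-of-Laplace-transform matching with $q_{\alpha+N}$ described above. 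Thus the real technical heart is justifying that two distributions supported in closed convex cones with the same Laplace transform on a nonempty open cone must be equal; this is standard (holomorphic continuation of the Laplace transform into a tube domain, cf.\ the Remark after Theorem~\ref{thm:BHWC}), and once it is in hand the proposition follows.
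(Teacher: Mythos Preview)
Your approach is essentially the paper's: represent $p^{-\alpha}$ on $C$ as the Laplace transform of a distribution supported on $C^*$, represent $p^{-\alpha}$ on $K$ as the Laplace transform of a nonnegative Borel measure on $K^*$ via Theorem~\ref{thm:BHWC}, and match the two by injectivity of the Laplace transform on distributions supported in a closed cone. The paper carries this out in four lines.

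Where you make life harder for yourself is the case split on $\alpha$. You already cite Remark~\ref{rem:Riesz}, which says (via \cite{ABG1973}) that \eqref{eq:Riesz} defines a distribution on $C^*$ for \emph{every} $\alpha$, not just ${\rm Re}(\alpha)>n$. The paper simply invokes this once, then cites Schwartz for injectivity of the Laplace transform on the space of distributions supported in $K^*$; since $C^*\subset K^*$, the Borel measure from Theorem~\ref{thm:BHWC} and the G{\aa}rding distribution must coincide, so the measure is supported on $C^*$ and Theorem~\ref{thm:BHWC} applies on $C$. Your entire paragraph on shifting by $N$, convolving with $q_N$, and analytic continuation is unnecessary --- and in the middle of it you briefly write ``$C\subseteq K$'' when you mean $K\subseteq C$ (you had the dual-cone inclusion $K^*\supseteq C^*$ right just before). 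Drop the detour and the argument is clean.
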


\begin{proof}
The only if direction is obvious. For the if direction, we argue as follows.
For any $\alpha$, by results in \cite[Section 2]{ABG1973}, the function $p^{-\alpha}:C\rightarrow \R_{>0}$ is the Laplace transform of a distribution supported on $C^*$.
For $\alpha>n$ this already follows from Theorem \ref{thm:Riesz}. If $p^{-\alpha}$ is completely monotone on an open subcone $K\subset C$, then, by Theorem \ref{thm:BHWC}, it is the Laplace transform of a unique Borel measure supported on $K^*$. But since $C^*\subset K^*$ and since the Laplace transform on the space of distributions supported on $K^*$ is injective 
\cite[Proposition 6]{Schwartz}, the above implies that this Borel measure is supported on $C^*$.
We thus conclude that 
$p^{-\alpha}$ is completely monotone on $C$ by Theorem \ref{thm:BHWC}.
\end{proof}

\section{Convolution Algebras}\label{sec5}

In this section we examine the convolution of measures supported in a cone.
This  is a commutative product. It is mapped to multiplication of functions under the
Laplace transform. We obtain isomorphisms of commutative algebras
between convolution algebras of Riesz kernels and  algebras of functions they represent.
This allows to derive relations among Riesz kernels of completely monotone functions.
In the setting of Section~\ref{sec3}, we obtain a realization of the {\em Orlik-Terao algebra} \cite{OrlikTerao}
as a convolution algebra.

As before, we fix an open convex cone $C$ in $\RR^n$. We write
$\mathcal{M}_+(C)$ for the set of locally compact Borel measures 
that are supported in the closed dual cone $C^*\subset (\R^n)^*$.
These hypotheses on the measures $\mu \in \mathcal{M}_+(C)$ stipulate that
$\,0 \leq \mu(K)<+\infty\,$ for any compact set $\,K\subset (\R^n)^*$, and
$\,\mu(E)=0\,$ for any Borel set $\,E\subset (\R^n)^* \backslash C^*$.
 
\begin{rem}  \label{rem:borel1} The set $\mathcal{M}_+(C)$ is a convex cone. In symbols,
 if $\mu, \nu\in \mathcal{M}_+(C)$ and $a,b \in \RR_{\geq 0}$, then
$a \cdot \mu + b \cdot \nu \in \mathcal{M}_+(C)$. 
\end{rem}

Given two measures $\mu, \nu\in \mathcal{M}_+(C)$, one defines their \emph{convolution} $\,\mu*\nu\,$ as follows:
\begin{align}\label{eq:conv1}
  (\mu*\nu)(E) \,\,\,= \,\, \int_{\R^n}\int_{\R^n} \chi_{E}(y+z)\,d\mu(y)d\nu(z),
\end{align}
where $E\subset (\R^n)^*$ is any Borel subset and $\chi_E$ denotes its characteristic function. 

\begin{lem} \label{lem:borel2} If $\mu, \nu\in \mathcal{M}_+(C)$ then $\mu*\nu\in \mathcal{M}_+(C)$. 
The convolution is commutative and associative, that is, $\,\mu*\nu=\nu*\mu\,$ and 
$\,(\mu*\nu)*\xi = \mu*(\nu*\xi)\,$ for $\,\mu, \nu, \xi\in \mathcal{M}_+(C)$.
\end{lem}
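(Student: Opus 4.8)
The plan is to verify directly from the definition \eqref{eq:conv1} that $\mu*\nu$ is again a locally compact Borel measure supported in $C^*$, and then to check commutativity and associativity by unwinding Fubini–Tonelli. First I would observe that the map $(y,z)\mapsto y+z$ is continuous, hence Borel measurable, so $\mu\times\nu$ is a Borel measure on $(\R^n)^*\times(\R^n)^*$ and $\mu*\nu$ is precisely its pushforward under addition; countable additivity of $\mu*\nu$ is then immediate from countable additivity of $\mu\times\nu$, and nonnegativity is clear since the integrand in \eqref{eq:conv1} is nonnegative. So the substance is the two finiteness/support conditions stipulated for membership in $\mathcal{M}_+(C)$.

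For the support condition, I would use that $C^*$ is a convex cone, so $C^*+C^*\subseteq C^*$: if $E\subset (\R^n)^*\setminus C^*$ is Borel, then for $y\in C^*$ and $z\in C^*$ one has $y+z\in C^*$, hence $\chi_E(y+z)=0$; since $\mu$ and $\nu$ are each supported on $C^*$, the double integral in \eqref{eq:conv1} vanishes, giving $(\mu*\nu)(E)=0$. For local finiteness, fix a compact set $K\subset(\R^n)^*$. The preimage of $K$ under addition meets $C^*\times C^*$ in a set contained in $(K-C^*)\times\bigl((K-C^*)\cap C^*\bigr)$ — more simply, for $y+z\in K$ with $y,z\in C^*$, both $y$ and $z$ lie in the set $(K - C^*)\cap C^*$, which I claim is compact. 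This is the one point requiring the hypothesis that $C$ is an open convex cone: $C^*$ is a closed cone, and $(K-C^*)\cap C^*$ is closed; I would show it is bounded by a recession-cone argument, noting that the recession cone of $K-C^*$ is $-C^*$ while the recession cone of $C^*$ is $C^*$, and $C^*\cap(-C^*)=\{0\}$ because $C$ open forces $C^*$ to be pointed (indeed $C^*$ spans $(\R^n)^*$ is not needed, only that $C$ has nonempty interior so $C^*$ contains no line). Hence $\mu*\nu(K)\le \mu\bigl((K-C^*)\cap C^*\bigr)\cdot\nu\bigl((K-C^*)\cap C^*\bigr)<\infty$ by local finiteness of $\mu$ and $\nu$. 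I expect this boundedness/pointedness step to be the main obstacle — or rather the only genuinely non-formal point — since everything else is routine measure theory.

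Finally, commutativity is immediate from the symmetry of $y+z$ in \eqref{eq:conv1} together with Fubini's theorem for the finite product measure on each compact box; since all integrands are nonnegative, Tonelli's theorem applies with no integrability hypotheses. For associativity I would compute both $\bigl((\mu*\nu)*\xi\bigr)(E)$ and $\bigl(\mu*(\nu*\xi)\bigr)(E)$ by iterating \eqref{eq:conv1}, obtaining in each case the triple integral $\int\!\int\!\int \chi_E(y+z+w)\,d\mu(y)\,d\nu(z)\,d\xi(w)$, where the interchange of the order of integration is again justified by Tonelli applied to the (locally finite, hence $\sigma$-finite on $C^*$) measures and a nonnegative integrand, and localizing $E$ to $E\cap B$ for balls $B$ to reduce to finite measures if one wants to avoid any $\sigma$-finiteness bookkeeping. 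This gives $(\mu*\nu)*\xi=\mu*(\nu*\xi)$, completing the proof.
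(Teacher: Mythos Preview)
Your argument is correct and follows the same approach as the paper: the paper's proof simply asserts that the preimage of a compact set under the addition map $C^*\times C^*\to C^*$ is compact and invokes Tonelli's Theorem for commutativity and associativity, while you supply the details behind that compactness claim via the recession-cone/pointedness argument. Your identification of the pointedness of $C^*$ (from $C$ being open) as the one substantive step is exactly right.
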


\begin{proof}
The first statement holds because the preimage of a compact set under the addition map 
$\, C^*\times C^* \rightarrow C^*,\, (y,z) \mapsto y+z\,$
is compact. This ensures that  $\mu*\nu$ is a locally compact Borel measure supported in $C^*$. 
The second statement, namely
commutativity and associativity of the convolution,
 follows from Tonelli's Theorem.
\end{proof}

We can also check that the convolution product is distributive with respect to addition of measures.
In light of Remark \ref{rem:borel1} and Lemma \ref{lem:borel2}, this means that $\bigl(\mathcal{M}_+(C),+,*\bigr)$ is a semiring.
We turn it into a ring by the following standard construction.
Let $\mathcal{M}(C)=\mathcal{M}_+(C)-\mathcal{M}_+(C)$ denote the set of all $\R$-linear combinations of measures in $\mathcal{M}_+(C)$.
Then we extend the convolution \eqref{eq:conv1} by bilinearity to $\mathcal{M}(C)$. 
%Then the space $\mathcal{M}(C)$ endowed with the operation of convolution is a commutative $\R$-algebra

We conclude that $\bigl(\mathcal{M}(C) ,+, * \bigr)$ is a commutative  $\RR$-algebra.
If we are given a finite collection of $m$ measures in $\mathcal{M}_+(C)$, then
the subalgebra they generate is the quotient $\RR[z_1,\ldots,z_m]/I$ of a polynomial ring
modulo an ideal $I$. In this representation, the tools of computer
algebra, such as {\em Gr\"obner bases}, can be applied to the study of measures.
 
\begin{exmp} \label{ex:alpha1r}
Let $n=1$ and $C = \RR_{>0}$. Fix rational numbers $\alpha_1,\ldots,\alpha_m > 0$
and let~$\mu_i$ be the measure in $\mathcal{M}_+(\RR_{>0})$ with density $y^{\alpha_i}/\Gamma(\alpha_i)$.
Additive relations among the $\alpha_i$ translate into multiplicative relations among the $\mu_i$.
This follows from Example~\ref{exmp:convthm} below.
The algebra generated by these measures is isomorphic to that
generated by the monomials $x^{-\alpha_1},\ldots,x^{-\alpha_m}$. For instance, if
$m = 3$ and $\alpha_i = i+2$ for $i=1,2,3$ then
$$ \bigl(\RR[\mu_1,\mu_2,\mu_3],+,*\bigr) \,\,\, \simeq \,\,\, \RR[x^{-3},x^{-4},x^{-5}] \,\, \simeq \,\,
\RR[z_1,z_2,z_3]/ \langle z_2^2-z_1z_3\,,\,z_1^2 z_2-z_3^2\, ,\, z_1^3-z_2 z_3 \rangle . $$
\end{exmp}

The convolution product has a nice interpretation in probability theory.

\begin{rem}
Let $X_1$ and $X_2$ be independent random variables with values in the cone $C$, and let
$\mu_1 $ and $\mu_2$ be their probability measures. Then the  probability measure $\mu_{X_1+X_2}$ of their sum $X_1+X_2$ is the convolution of the two measures, that is,
$$\mu_{X_1+X_2} \,\,=\,\, \mu_{X_1}*\mu_{X_2}. $$
The convolution of probability measures corresponds to adding random variables.
\end{rem}

It is instructive to verify this statement when the random variables are discrete.

\begin{exmp}[$n=1$]
Let $X_1$ and $X_2$ be independent {\em Poisson random variables} with parameters $\lambda_1, \lambda_2>0$ respectively.
Thus $\mu_{X_1}$ and $\mu_{X_2}$ are atomic measures supported on the set of nonnegative integers,
with $\,\mu_{X_i}(\{k\}) \,=\, \lambda_i^{k}e^{-\lambda_i}/k! \,$ for $k=0,1,\dots$. Then
  \begin{align*}
    \mu_{X_1+X_2}(\{k\}) \,\, &= \,\, (\mu_{X_1}*\mu_{X_2})(\{k\}) \,\, = 
    \sum_{k_1+k_2=k} \!\! \mu_{X_1}(\{k_1\}) \cdot \mu_{X_2}(\{k_2\}) \\
    &= \,\, \sum_{k_1+k_2=k} \frac{\lambda_1^{k_1}e^{-\lambda_1}}{k_1!}\frac{\lambda_2^{k_2}e^{-\lambda_2}}{k_2!}
    \,\, = \,\, \, \frac{(\lambda_1+\lambda_2)^ke^{-(\lambda_1+\lambda_2)}}{k!}.
  \end{align*}
  Hence $X_1+X_2$ is a Poisson random variable with parameter $\lambda_1+\lambda_2$.
  This fact is well-known in probability. In this example, $C=\R_{>0}$ and 
  $\,\mu_{X_1}, \,\mu_{X_2},\, \mu_{X_1}*\mu_{X_2}\in \mathcal{M}_+(C)$.
\end{exmp}

Let $\mu\in \mathcal{M}_+(C)$. If the integral
\begin{align}\label{eq:Lap}
\mathcal{L}\{\mu\}(x) \,\, := \,\,  \int_{C^*} e^{-\langle y,x\rangle} d\mu(y) 
\end{align}
converges for all $x \in C$, we say that $\mu$ has a \emph{Laplace transform} $\mathcal{L}\{\mu\}:C\rightarrow \R_{>0}$.

\begin{rem}\label{rem:Lap}
By the Bernstein-Hausdorff-Widder-Choquet Theorem \ref{thm:BHWC}, the Laplace transform
 is a completely monotone function on $C$, and if $\mathcal{L}\{\mu\}=\mathcal{L}\{\nu\}$ then $\mu=\nu$.
\end{rem}

The Laplace transform takes convolutions of measures to products of functions.

\begin{prop}\label{prop:conv}
If $\mu, \nu\in\mathcal{M}_+(C)$ have Laplace transforms, then so does $\mu*\nu$, and
\begin{align}\label{eq:convthm}
  \mathcal{L}\{\mu*\nu\}(x)  \,\,=\,\,
   \mathcal{L}\{\mu\}(x) \cdot\mathcal{L}\{\nu\}(x) \quad \hbox{for all} \,\,\,x \in C.
\end{align}
In particular, the product of two completely monotone functions on $C$ is completely monotone, and
its Riesz measure is the convolution of the individual Riesz measures.
\end{prop}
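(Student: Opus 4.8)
The plan is to establish the convolution identity \eqref{eq:convthm} by a direct Fubini/Tonelli computation, and then read off the statement about products of completely monotone functions as a corollary via the Bernstein-Hausdorff-Widder-Choquet Theorem \ref{thm:BHWC}. First I would unwind the definitions: by \eqref{eq:Lap},
\[
\mathcal{L}\{\mu*\nu\}(x) \,=\, \int_{C^*} e^{-\langle w,x\rangle}\, d(\mu*\nu)(w),
\]
and by the definition \eqref{eq:conv1} of the convolution, for any bounded measurable $g$ on $C^*$ one has
\[
\int_{C^*} g(w)\, d(\mu*\nu)(w) \,=\, \int_{C^*}\int_{C^*} g(y+z)\, d\mu(y)\, d\nu(z),
\]
which follows from \eqref{eq:conv1} for $g = \chi_E$ and extends to simple functions and then to bounded measurable $g$ by the standard monotone-class argument. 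Applying this with $g(w) = e^{-\langle w,x\rangle}$ (which is bounded on $C^*$ by $1$ since $\langle w,x\rangle \geq 0$ for $w\in C^*$, $x\in C$) and using $e^{-\langle y+z,x\rangle} = e^{-\langle y,x\rangle}e^{-\langle z,x\rangle}$ gives
\[
\mathcal{L}\{\mu*\nu\}(x) \,=\, \int_{C^*}\int_{C^*} e^{-\langle y,x\rangle} e^{-\langle z,x\rangle}\, d\mu(y)\, d\nu(z) \,=\, \Bigl(\int_{C^*} e^{-\langle y,x\rangle} d\mu(y)\Bigr)\Bigl(\int_{C^*} e^{-\langle z,x\rangle} d\nu(z)\Bigr),
\]
where the factorization of the double integral is justified by Tonelli's theorem, the integrand being nonnegative. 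In particular the left-hand integral is finite for all $x\in C$, so $\mu*\nu$ has a Laplace transform, and \eqref{eq:convthm} holds.

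For the final sentence: if $f_1, f_2$ are completely monotone on $C$, then by Theorem \ref{thm:BHWC} they are Laplace transforms of Borel measures $\mu_1, \mu_2$ supported on $C^*$, and these measures lie in $\mathcal{M}_+(C)$ (local finiteness follows from the fact that $\mathcal{L}\{\mu_i\}$ is finite on $C$, since a positive linear functional on $C^*$ is bounded below by a positive multiple of $\|\cdot\|$ on any compact subset, so finiteness of the integral forces local finiteness of $\mu_i$). By Lemma \ref{lem:borel2}, $\mu_1*\mu_2 \in \mathcal{M}_+(C)$, and by the identity just proved, $f_1 f_2 = \mathcal{L}\{\mu_1\}\cdot\mathcal{L}\{\mu_2\} = \mathcal{L}\{\mu_1*\mu_2\}$. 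Since $\mu_1*\mu_2$ is a nonnegative measure supported on $C^*$, Remark \ref{rem:Lap} (i.e., the if-direction of Theorem \ref{thm:BHWC}) shows $f_1 f_2$ is completely monotone, and the uniqueness of the representing measure identifies the Riesz measure of $f_1 f_2$ with $\mu_1*\mu_2$.

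The only point requiring genuine care is the measure-theoretic bookkeeping: confirming that the preimage under addition of a compact set stays within the regime where Tonelli applies (already handled in the proof of Lemma \ref{lem:borel2}), and confirming that the representing measures of completely monotone functions actually belong to $\mathcal{M}_+(C)$ rather than merely being Borel measures on $C^*$. I expect the main (mild) obstacle to be articulating this last local-finiteness point cleanly; everything else is a routine application of Tonelli and the already-established theorems.
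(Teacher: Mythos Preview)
Your proof is correct and follows essentially the same approach as the paper: a direct Tonelli computation to factor the double integral, followed by an appeal to Theorem~\ref{thm:BHWC} and Remark~\ref{rem:Lap} for the second assertion. You include more measure-theoretic justification (the monotone-class step and the local-finiteness remark) than the paper does, but the underlying argument is identical.
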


\begin{proof}
The definition in \eqref{eq:conv1} and Tonelli's Theorem imply that, for any $x\in C$,
  \begin{align}
    \mathcal{L}\{\mu*\nu\}(x) &\,\,=\,\, \int_{C^*} e^{-\langle w,x\rangle}\, d(\mu*\nu)(w) \,\,=\,\, \int_{C^*}\left(\int_{C^*} e^{-\langle y,x\rangle}d\mu(y)\right) e^{-\langle z,x\rangle}d\nu(z)\\
    &\,\,=\,\, \int_{C^*} e^{-\langle y,x\rangle}\,d\mu(y)\int_{C^*} e^{-\langle z,x\rangle}\,d\nu(z) 
    \,\,=\,\, \mathcal{L}\{\mu\}(x)\cdot\mathcal{L}\{\nu\}(x).
  \end{align}
  The assertion in the second sentence follows from Theorem \ref{thm:BHWC} and Remark \ref{rem:Lap}.
\end{proof}

\begin{exmp}\label{exmp:convthm} Fix $n=1$ and let
$\mu_i$ be the measure on $\R_{>0}$ with density $y^{\alpha_i-1}/\Gamma(\alpha_i)$.
By equation \eqref{eq:product1}, its Laplace transform is the monomial
$\,  \mathcal{L}\{\mu_i\}(x)  = x^{-\alpha_i}$. Thus the assignment $\mu_i \mapsto \mathcal{L}\{\mu_i\}$
gives the isomorphism of $\RR$-algebras promised in Example~\ref{ex:alpha1r}.
\end{exmp}

Here we are tacitly using the following natural extension of the Laplace transform from the semiring $\mathcal{M}_+(C)$
to the full $\RR$-algebra $ \mathcal{M}(C)$. If $\mu=\mu_+-\mu_-\in \mathcal{M}(C)$,
where the measures $\mu_+,\mu_-\in \mathcal{M}_+(C)$ have Laplace transforms, then
$\mathcal{L}\{\mu\} := \mathcal{L}\{\mu_+\}  -  \mathcal{L}\{\mu_-\} $.

Now, let $\mu_1,\ldots,\mu_m$ be measures in $\mathcal{M}_+(C)$ that have  Laplace transforms.
We write $\RR[\mu_1,\ldots,\mu_m]$ for the $\RR$-algebra they generate with respect to 
the convolution product~$*$. This is a subalgebra of the commutative algebra
$(\mathcal{M}(C),+,*)$. By Proposition \ref{prop:conv}, the Laplace transform is an algebra homomorphism from
$\RR[\mu_1,\ldots,\mu_m]$ 
  into the algebra of $\mathcal{C}^{\infty}$-functions on $C$. 
  Moreover, by Remark \ref{rem:Lap}, this homomorphism has trivial kernel.

This construction allows us to transfer polynomial relations among completely monotone functions
to polynomial relations among their Riesz kernels, and vice versa. In the remainder of this section,
we demonstrate this for the scenario in Section~\ref{sec3}.

Fix linear forms $\ell_1,\ldots, \ell_m\in (\R^n)^*$ and let $C=\{\ell_1>0,\dots,\ell_m>0\}\subset \R^n$ be the polyhedral cone
they define. For all $i=1,\ldots,m$, and all $x$ with $\ell_i(x) >0$, we have
\begin{equation}
\label{eq:rayi}
  \ell_i^{-1}(x) \,\,= \,\, \int_{\R_{\geq 0}\ell_i} \!\! e^{-\langle y,x\rangle} d\mu_i(y) 
  \,\, = \,\, \int_{\R_{\geq 0}} \!\! e^{-t\ell_i(x)}\,dt.
\end{equation}
Let $\mu_i$ be the Lebesgue measure $dt$ on the ray $\R_{\geq 0}\ell_i = \{t\ell_i:t>0\}$, viewed as a measure on $C^*$.
By \eqref{eq:rayi}, the Laplace transform of this measure is the reciprocal linear form:
$$ \mathcal{L}\{\mu_i\} \,\,\, = \,\,\,  \ell_i^{-1} . $$
The subalgebra of $\mathcal{M}(C)$ generated by $\mu_1,\dots, \mu_m$ is isomorphic, via Laplace transform, to the subalgebra of the algebra of rational functions on $\R^n$ generated by $\ell_1^{-1},\dots, \ell_m^{-1}$.
This algebra was introduced in \cite{OrlikTerao}. It
is known as the {\em Orlik-Terao algebra} of $\ell_1,\dots,\ell_m$.

\begin{cor}
The convolution algebra  $\RR[\mu_1,\ldots,\mu_m] $ is isomorphic to the
Orlik-Terao algebra, and therefore to
$ \RR[z_1,\ldots,z_m]/I$, where $I$ is the Proudfoot-Speyer ideal in \cite{PrSp}.
Its monomials $\mu_{i_1} {*} \mu_{i_2} {*} \cdots {*} \mu_{i_s}$, where
$\ell_{i_1},\ell_{i_2},\ldots,\ell_{i_s}$ runs over multisubsets of linear forms that span $(\RR^n)^*$,
are the piecewise polynomial volume functions in Theorem \ref{thm:prodlin}.
\end{cor}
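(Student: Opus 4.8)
The plan is to chase the identifications that have already been set up in this section and in Section~\ref{sec3}, and then invoke the known presentation of the Orlik-Terao algebra. First I would recall the chain of isomorphisms established just above the statement: by Proposition~\ref{prop:conv} the Laplace transform $\mathcal{L}$ is an $\RR$-algebra homomorphism from the convolution algebra $\RR[\mu_1,\ldots,\mu_m]$ into the $\mathcal{C}^\infty$-functions on $C$, and by Remark~\ref{rem:Lap} it is injective. Since $\mathcal{L}\{\mu_i\} = \ell_i^{-1}$, the image is exactly the subalgebra of rational functions generated by $\ell_1^{-1},\ldots,\ell_m^{-1}$, which is by definition the Orlik-Terao algebra of the arrangement; hence $\RR[\mu_1,\ldots,\mu_m]$ is isomorphic to it. To get the polynomial presentation, I would cite the theorem of Proudfoot and Speyer~\cite{PrSp}: the Orlik-Terao algebra is $\RR[z_1,\ldots,z_m]/I$ where $I$ is their ideal (generated by the ``boundary'' elements attached to the circuits of the arrangement), with $z_i \mapsto \ell_i^{-1}$, equivalently $z_i \mapsto \mu_i$ on the convolution side.

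The second sentence requires matching monomials in the $\mu_i$ with the volume functions of Theorem~\ref{thm:prodlin}. Here the key observation is that a monomial $\mu_{i_1} * \cdots * \mu_{i_s}$ is, by Proposition~\ref{prop:conv} applied repeatedly, the Riesz measure of $\mathcal{L}\{\mu_{i_1}\}\cdots\mathcal{L}\{\mu_{i_s}\} = \ell_{i_1}^{-1}\cdots\ell_{i_s}^{-1}$; that is, it is the $\alpha_1=\cdots=1$ instance of the situation in Proposition~\ref{prop:pushdirichlet}, but now with the multiset of forms $\{\ell_{i_1},\ldots,\ell_{i_s}\}$ in place of $\{\ell_1,\ldots,\ell_m\}$. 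When this multiset spans $(\RR^n)^*$, the dual cone is full-dimensional, so by Theorem~\ref{thm:prodlin} the Riesz measure is absolutely continuous with density the piecewise polynomial function whose value at $y$ is $|L|^{-1}$ times the $(s-n)$-dimensional volume of the fiber of the corresponding $n\times s$ matrix over $y$. So the monomial, as an element of $\mathcal{M}(C)$, \emph{is} (the measure with density equal to) that volume function, which is exactly the content claimed. I would also remark that when the multiset fails to span, the pushforward is lower-dimensional and there is no density — consistent with Remark~\ref{rem:degenerate} — so the spanning hypothesis in the statement is exactly the right one.

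The main obstacle is not really a deep one but a bookkeeping subtlety: I must make sure the convolution product of the ray-measures $\mu_i$ is literally identified with the pushforward Lebesgue measure appearing in Theorem~\ref{thm:prodlin}, not merely abstractly isomorphic to it. Concretely, $\mu_{i_1} * \cdots * \mu_{i_s}$ is the pushforward of Lebesgue measure on $\RR_{\geq 0}^s$ (the product of the $dt$'s on the $s$ rays) under the addition map $\RR_{\geq 0}^s \to C^*$, and this addition map is exactly the linear map $L$ sending the standard basis to $\ell_{i_1},\ldots,\ell_{i_s}$; this is immediate from the definition \eqref{eq:conv1} of convolution, iterated, together with Tonelli. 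Once that identification is pinned down, Theorem~\ref{thm:prodlin} (in the all-exponents-equal-to-$1$ case, i.e.\ the first paragraph of its proof) hands us the piecewise polynomiality, the degree $s-n$, and the normalization by $|L|$, and there is nothing further to check. I expect the whole argument to be three or four lines once these references are assembled.
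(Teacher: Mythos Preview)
Your proposal is correct and follows essentially the same approach as the paper: the paper does not give a separate formal proof of this corollary, but rather presents it as an immediate consequence of the preceding discussion (the injective algebra homomorphism $\mathcal{L}$ with $\mathcal{L}\{\mu_i\}=\ell_i^{-1}$) together with the Proudfoot--Speyer presentation of the Orlik--Terao algebra, and the identification of monomials with the volume functions of Theorem~\ref{thm:prodlin}. Your extra care in explicitly identifying the iterated convolution with the pushforward of Lebesgue measure under $L$ is a helpful clarification that the paper leaves implicit.
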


Indeed, Proudfoot and Speyer \cite{PrSp} gave an excellent
presentation of the Orlik-Terao algebra by showing
 that the circuit polynomials 
form a universal Gr\"obner basis of $I$.

\begin{exmp}[$n{=}3,m{=}5$] Let $\ell_1,\ldots,\ell_5$ be the linear forms in Example~\ref{ex:3by5}.
Then 
\begin{equation}
\label{eq:conv35}
\RR[\mu_1,\ldots,\mu_5] \,\,\, = \,\,\, \RR[z_1,\ldots,z_5]/I, 
\end{equation}
where the Proudfoot-Speyer ideal $I$ is generated by its universal Gr\"obner basis
$$ \begin{matrix} \bigl\{ 
z_1 z_2 z_3-2 z_1 z_2 z_4+3 z_1 z_3 z_4-2 z_2 z_3 z_4 \, , \,\,
z_1 z_2 z_3-z_1 z_2 z_5+2 z_1 z_3 z_5-2 z_2 z_3 z_5, \\ \quad
2 z_1 z_2 z_4-z_1 z_2 z_5+z_1 z_4 z_5-2 z_2 z_4 z_5 \, , \,\,
3 z_1 z_3 z_4-2 z_1 z_3 z_5+z_1 z_4 z_5-2 z_3 z_4 z_5, \\
z_2 z_3 z_4-z_2 z_3 z_5+z_2 z_4 z_5-z_3 z_4 z_5
\bigr\}. \end{matrix} $$
Note that these five cubics are the circuits in $I$.
The monomial $z_1 z_2 z_3 z_4 z_5$ in the convolution algebra \eqref{eq:conv35}
represents the piecewise quadratic function $q(y)$ in
Example~\ref{ex:35}.
\end{exmp}

\section{Elementary Symmetric Polynomials} \label{sec6}

In this section we study complete monotonicity of inverse powers of the elementary symmetric polynomials
$E_{m,n}$. In Theorem \ref{thm:elem} we prove Conjecture \ref{conj:MSUZ} for this
 special class of hyperbolic polynomials. In Theorem \ref{thm:nec} we prove 
 the only if direction of Conjecture \ref{conj:SS}. These results 
resolve questions raised by Scott and Sokal in \cite{scott2014}.

Our first goal is to show that sufficiently negative powers of $E_{m,n}$ are completely~monontone.
We begin with a lemma by Scott and Sokal which is derived from Theorem \ref{thm:BHWC}.

\begin{lem}[Lemma $3.3$ in \cite{scott2014}]\label{lem:rowno} Fix
$\alpha>0$, an open convex cone $C \subset \R^n$,
and~$\mathcal{C}^\infty$ functions $A,B:C\rightarrow \R_{>0}$. The function 
$(x,y) \mapsto (A(x)+B(x) y)^{-\alpha}$ is completely~monotone on $C\times \R_{>0}$ if and only if $B^{-\alpha} e^{-tA/B}$ is completely monotone on $C$ for all $t\geq 0$.
\end{lem}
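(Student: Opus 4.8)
The plan is to pass to the integral representation on both sides via Theorem~\ref{thm:BHWC} and match the representing measures. First I would treat the ``only if'' direction. Suppose $(x,y)\mapsto (A(x)+B(x)y)^{-\alpha}$ is completely monotone on $C\times\R_{>0}$. By Theorem~\ref{thm:BHWC} there is a Borel measure $\mu$ on the dual cone $(C\times\R_{>0})^\ast = C^\ast\times\R_{\geq 0}$ with
\[
  (A(x)+B(x)y)^{-\alpha} \,\,=\,\, \int_{C^\ast\times\R_{\geq 0}} e^{-\langle w,x\rangle - sy}\,d\mu(w,s).
\]
On the other hand, the elementary identity \eqref{eq:product1}, i.e. $u^{-\alpha} = \Gamma(\alpha)^{-1}\int_0^\infty t^{\alpha-1}e^{-tu}\,dt$, applied with $u = A(x)+B(x)y$ gives
\[
  (A(x)+B(x)y)^{-\alpha} \,\,=\,\, \frac{1}{\Gamma(\alpha)}\int_0^\infty t^{\alpha-1} e^{-tA(x)}\,e^{-tB(x)y}\,dt.
\]
Now I would exploit the uniqueness clause in Theorem~\ref{thm:BHWC} in the single variable $y$: for each fixed $x\in C$, both displays exhibit $(A(x)+B(x)y)^{-\alpha}$ as a Laplace transform in $y$ of a measure on $\R_{\geq 0}$, so these measures agree. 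Concretely, comparing the two, the pushforward of $t^{\alpha-1}\Gamma(\alpha)^{-1}\,dt$ under $t\mapsto tB(x)$ is the $y$-marginal slice of $\mu$; integrating out $s$ and rewriting, one reads off for every $t\ge 0$
\[
  B(x)^{-\alpha} e^{-tA(x)/B(x)} \,\,=\,\, \int_{C^\ast} e^{-\langle w,x\rangle}\,d\nu_t(w)
\]
for a suitable nonnegative Borel measure $\nu_t$ on $C^\ast$ (obtained by a change of variables $s = t/B$... — more precisely $\nu_t$ is the image of $\mu$ under an appropriate map, with the $t$ appearing through the substitution in the $s$-variable). Hence $B^{-\alpha}e^{-tA/B}$ is a Laplace transform of a measure on $C^\ast$, so by the ``if'' direction of Theorem~\ref{thm:BHWC} it is completely monotone on $C$.

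For the converse, suppose $B^{-\alpha}e^{-tA/B}$ is completely monotone on $C$ for every $t\geq 0$, with representing measure $\nu_t$ on $C^\ast$, so $B(x)^{-\alpha}e^{-tA(x)/B(x)} = \int_{C^\ast} e^{-\langle w,x\rangle}d\nu_t(w)$. Then I would simply reassemble:
\[
  (A(x)+B(x)y)^{-\alpha} \,=\, \frac{1}{\Gamma(\alpha)}\int_0^\infty t^{\alpha-1} e^{-tA(x)} e^{-tB(x)y}\,dt
  \,=\, \frac{1}{\Gamma(\alpha)}\int_0^\infty\!\!\int_{C^\ast} t^{\alpha-1} e^{-\langle w,x\rangle} e^{-tB(x)y}\,d\nu_t(w)\,dt,
\]
where I have used $e^{-tA(x)} = B(x)^{\alpha}\cdot B(x)^{-\alpha}e^{-tA(x)/B(x)}$ and absorbed the factor $B(x)^{\alpha}$ — wait, this is the subtlety: $B(x)^\alpha$ is $x$-dependent and does not obviously come out of the integral against $e^{-\langle w,x\rangle}$. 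So instead I would start from $B(x)^{-\alpha}e^{-tA(x)/B(x)}$ directly and multiply by $B(x)^{-? }$... The clean route is: write $(A+By)^{-\alpha} = (A/B + y)^{-\alpha} B^{-\alpha}$, then use $(A/B+y)^{-\alpha} = \Gamma(\alpha)^{-1}\int_0^\infty t^{\alpha-1}e^{-t(A/B+y)}dt$, so $(A+By)^{-\alpha} = \Gamma(\alpha)^{-1}\int_0^\infty t^{\alpha-1} e^{-ty}\big(B^{-\alpha}e^{-tA/B}\big)\,dt$, and now the bracketed factor is exactly the completely monotone function whose representation we are given. Substituting,
\[
  (A(x)+B(x)y)^{-\alpha} \,=\, \frac{1}{\Gamma(\alpha)}\int_0^\infty\!\!\int_{C^\ast} t^{\alpha-1}\, e^{-\langle w,x\rangle}\, e^{-ty}\,d\nu_t(w)\,dt.
\]
This exhibits $(A+By)^{-\alpha}$ as the Laplace transform of the measure on $C^\ast\times\R_{\geq 0}$ given by pushing forward $\Gamma(\alpha)^{-1} t^{\alpha-1}\,d\nu_t(w)\,dt$ under $(w,t)\mapsto(w,t)$, which lives on $(C\times\R_{>0})^\ast$; by Theorem~\ref{thm:BHWC} the function is completely monotone on $C\times\R_{>0}$.

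\textbf{Main obstacle.} The analytically delicate point, which I would need to justify carefully, is the measurability and $\sigma$-finiteness needed to apply Tonelli's theorem to the double integrals above, together with the joint measurability of $t\mapsto\nu_t$ (as a kernel) so that $\int_0^\infty t^{\alpha-1}d\nu_t\,dt$ is a genuine Borel measure on $C^\ast\times\R_{\ge 0}$; for the ``only if'' direction the corresponding obstacle is to justify disintegrating $\mu$ in the last coordinate and performing the change of variables $s\mapsto tB(x)$ consistently across all $x$, which is exactly where the uniqueness statement of Theorem~\ref{thm:BHWC} does the real work. One also must check convergence of the relevant Laplace integrals on the open cones (so that the representations are not merely formal); since $A,B>0$ and $\alpha>0$ on $C$ this is routine but should be stated. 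Everything else is a bookkeeping rearrangement of the identity $u^{-\alpha}=\Gamma(\alpha)^{-1}\int_0^\infty t^{\alpha-1}e^{-tu}dt$ combined with the characterization in Theorem~\ref{thm:BHWC}.
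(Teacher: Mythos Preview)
The paper does not prove this lemma; it is quoted verbatim from Scott--Sokal, so there is no ``paper's own proof'' to compare against. That said, the key identity you isolate,
\[
(A+By)^{-\alpha}\;=\;\frac{1}{\Gamma(\alpha)}\int_0^\infty s^{\alpha-1}e^{-sy}\bigl(B^{-\alpha}e^{-sA/B}\bigr)\,ds,
\]
is exactly what the paper itself invokes (and combines with injectivity of the Laplace transform) inside the proof of Lemma~\ref{lem:podstaw}, so your instincts match the paper's toolkit.

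Your ``if'' direction, once you self-correct to the factorization $(A+By)^{-\alpha}=B^{-\alpha}(A/B+y)^{-\alpha}$, is correct. In fact you can avoid the measurability issue you raise about $t\mapsto\nu_t$ altogether: there is no need to assemble a Borel measure on $C^*\times\R_{\geq 0}$. Just differentiate the displayed identity directly, obtaining
\[
(-1)^{k+j}D_{v_1}\cdots D_{v_k}\partial_y^j(A+By)^{-\alpha}
=\frac{1}{\Gamma(\alpha)}\int_0^\infty s^{\alpha-1+j}e^{-sy}\bigl[(-1)^kD_{v_1}\cdots D_{v_k}(B^{-\alpha}e^{-sA/B})\bigr]\,ds\;\ge 0,
\]
since the bracket is nonnegative by hypothesis. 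No Theorem~\ref{thm:BHWC} is needed for this direction.

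Your ``only if'' direction has the right shape, but the step ``one reads off for every $t\ge 0$ \dots a suitable nonnegative Borel measure $\nu_t$'' is genuinely incomplete: extracting an $x$-independent $\nu_t$ from the joint measure $\mu$ is a disintegration, and you correctly flag this as the obstacle. You can sidestep it. Apply $(-1)^kD_{v_1}\cdots D_{v_k}$ in the $x$-variables to both sides of the displayed identity. The left side is completely monotone in $y$ (by the joint hypothesis), so by one-dimensional Bernstein it is the Laplace transform of a unique nonnegative measure on $\R_{\geq 0}$; but the right side already exhibits it as the Laplace transform of the signed density $s\mapsto \Gamma(\alpha)^{-1}s^{\alpha-1}(-1)^kD_{v_1}\cdots D_{v_k}\bigl(B^{-\alpha}e^{-sA/B}\bigr)$. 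Uniqueness forces this density to be nonnegative for almost every $s$, and continuity in $s$ upgrades this to every $s\ge 0$. This delivers complete monotonicity of $B^{-\alpha}e^{-tA/B}$ on $C$ for all $t\ge 0$ without ever invoking the joint Riesz measure $\mu$ or any disintegration.
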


\begin{rem}
If $A+By$ is a homogeneous polynomial, then both conditions above are equivalent to complete monotonicity of $B^{-\alpha} e^{-tA/B}:C\rightarrow \R_{>0}$ for $t=0$ and some $t>0$.
\end{rem}

We also need the following generalization of Lemma $3.9$ in \cite{scott2014}. 

\begin{lem}\label{lem:podstaw}
Fix two  cones $C$ and $C'$ and $\alpha>0$.
Let $A, B$ be $\mathcal{C}^\infty$ functions on $C'$ such that $g=(A+By)^{-\alpha}$ is completely monotone on $C'\times \R_{>0}$ with Riesz kernel $q^\prime$.
 Let $f(x,r)$ be a completely monotone function on $C\times \R_{>0}$ with Riesz kernel $q$.
Then the function $B^{-\alpha}f(  x,A/B)$ is completely monotone on  $C\times C'$,
with Riesz kernel
\begin{equation}
\label{eq:desiredriesz}
 (z,w) \,\,\mapsto \,\, \int_{\R_{\geq 0}}\frac{\Gamma(\alpha)}{s^{\alpha-1}}q ( z,s)  q'( w,s)  ds .
 \end{equation}
\end{lem}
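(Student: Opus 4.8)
The plan is to verify the claimed integral representation directly, using the convolution/Laplace machinery of Section~\ref{sec5} together with the representation of $g$ and $f$ as Laplace transforms. First I would record the two hypotheses in integral form: since $g=(A+By)^{-\alpha}$ is completely monotone on $C'\times\R_{>0}$ with Riesz kernel $q'$, Theorem~\ref{thm:BHWC} gives
\begin{equation}
\label{eq:Ainv}
(A(w)+B(w)y)^{-\alpha} \,=\, \int_{(C'\times\R_{>0})^*} e^{-\langle v,w\rangle - sy}\, q'(v,s)\, dv\, ds,
\end{equation}
and likewise $f(x,r)=\int e^{-\langle u,x\rangle - sr} q(u,s)\,du\,ds$. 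The key idea is to use \eqref{eq:product1} in the reverse direction: for $a>0$ and any $c>0$ we have $c^{-a}\Gamma(a)^{-1}\int_{\R_{\geq 0}} s^{a-1}e^{-sc}\,ds = c^{-a}\cdot c^{a}=1$, equivalently $\Gamma(\alpha)^{-1}\int_{\R_{\geq 0}} s^{\alpha-1} e^{-s(A/B)}\,ds = (A/B)^{-\alpha}=B^{\alpha}A^{-\alpha}$. More usefully, I want to express $B^{-\alpha}f(x,A/B)$ as a Laplace transform, so the strategy is to substitute $r=A(w)/B(w)$ into the representation of $f$, then replace $e^{-sA/B}$ by its own Laplace representation coming from \eqref{eq:Ainv}.

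Concretely: start from $f(x,A/B)=\int e^{-\langle u,x\rangle} e^{-sA/B}\, q(u,s)\,du\,ds$. For each fixed $s>0$, I need to write $B^{-\alpha}e^{-sA/B}$ as a Laplace transform in the $w$ variables; this is exactly where Lemma~\ref{lem:rowno} (or rather the representation \eqref{eq:Ainv}) enters. Unwinding \eqref{eq:Ainv} via the change of variables that produced \eqref{eq:product1}, one gets $B(w)^{-\alpha} e^{-s A(w)/B(w)} = \frac{\Gamma(\alpha)}{s^{\alpha-1}}\int e^{-\langle v,w\rangle}\, q'(v,s)\, dv$ — this identity is essentially the content of Lemma~\ref{lem:rowno} applied to $g$, read through Garding's formula. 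Substituting this into the expression for $B^{-\alpha}f(x,A/B)$ and applying Tonelli's theorem to interchange the $s$-integral with the $v$-integral yields
\begin{equation}
B^{-\alpha} f(x,A/B) \,=\, \int_{C^*\times (C')^*} e^{-\langle u,x\rangle -\langle v,w\rangle}\left(\int_{\R_{\geq 0}} \frac{\Gamma(\alpha)}{s^{\alpha-1}}\, q(u,s)\, q'(v,s)\, ds\right) du\, dv,
\end{equation}
which is precisely the asserted Riesz kernel \eqref{eq:desiredriesz} (with $(z,w)$ in place of $(u,v)$). Complete monotonicity on $C\times C'$ then follows from Theorem~\ref{thm:BHWC}, since the inner integral is manifestly nonnegative, being an integral of a product of nonnegative functions.

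The main obstacle I anticipate is making the identity $B^{-\alpha}e^{-sA/B}=\Gamma(\alpha)s^{1-\alpha}\int e^{-\langle v,w\rangle}q'(v,s)\,dv$ rigorous and correctly normalized: one must carefully track how the auxiliary variable $y$ in $g=(A+By)^{-\alpha}$ and the dual variable $s$ interact under the Laplace transform, i.e.\ peel off the $y$-direction of the representation of $g$ and recognize the remaining $s$-slice as a scaled version of $B^{-\alpha}e^{-sA/B}$. This is a one-variable computation ($c^{-\alpha}=\Gamma(\alpha)^{-1}\int s^{\alpha-1}e^{-cs}ds$ with $c=A/B$), but bookkeeping the $\Gamma(\alpha)/s^{\alpha-1}$ factor and justifying the Fubini/Tonelli interchanges — which requires, e.g., that for $x$ in a compact subcone of $C$ the relevant integrands are integrable — is the delicate part. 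A secondary point is ensuring $A/B$ lands in $\R_{>0}$ so that $f(x,A/B)$ is defined, which holds because $A,B:C'\to\R_{>0}$.
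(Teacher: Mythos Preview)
Your proposal is correct and follows essentially the same route as the paper's proof: write $f$ via its Laplace representation, derive the key identity $B^{-\alpha}e^{-sA/B}=\frac{\Gamma(\alpha)}{s^{\alpha-1}}\int_{(C')^*} e^{-\langle v,w\rangle}q'(v,s)\,dv$ by comparing the given Laplace representation of $(A+By)^{-\alpha}$ with the one coming from \eqref{eq:product1} (and invoking injectivity of the Laplace transform in the $y$-variable), then substitute and apply Tonelli. The paper makes exactly this argument, and your anticipated ``main obstacle'' --- extracting the $s$-slice identity with the correct $\Gamma(\alpha)/s^{\alpha-1}$ normalization --- is handled there just as you outline.
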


\begin{proof}
By Theorem \ref{thm:BHWC}, our functions admit the following integral representations:
$$
f(  x,r) =\int_{C^* \times \R_{\geq 0}}  \!\!\!\! e^{- \langle  z,  x \rangle -s\cdot r}\, q(  z,s)d  z ds,\quad
(A+By)^{-\alpha} =\int_{C'^* \times \R_{\geq 0}} \!\!\!\!\!\! e^{- \langle  w,  u \rangle -s\cdot y}\, q'(  w,s)d  w ds.
$$
From the first equation we get
\begin{equation}
\label{eq:weget}
  B^{-\alpha}f(  x,A/B) \quad = \quad
  \int_{C^*\times \R_{\geq 0}} \!\!\!\! e^{- \langle z,  x \rangle }(B^{-\alpha}e^{-s\cdot A/B})\, q (  z,s)d  z ds.
\end{equation}
For fixed $A,B$ we have $(A+By)^{-\alpha}= 
\Gamma(\alpha)^{-1} \int_{\R_{\geq 0}} e^{-s\cdot y} s^{\alpha-1}B^{-\alpha}e^{-s\cdot A/B}ds$. 
This follows from \eqref{eq:product1} by setting $x=A+By$ and changing the variable of integration to
$s/B$. By comparing the two integral representations of $(A+By)^{-\alpha}$,
and by using the injectivity of the Laplace transform, we find
$$ B^{-\alpha}e^{-s\cdot A/B}\,\, = \,\, \int_{C'^*}e^{- \langle  w,  u \rangle}\frac{\Gamma(\alpha)}{s^{\alpha-1}} q'(  w,s)d  w .$$
Substituting this expression into \eqref{eq:weget},
we obtain
$$B^{-\alpha}f(  x,A/B)\,\,=\,\, \int_{C^*\times \R_{\geq 0}}e^{-\langle z,  x \rangle}\left(\int_{C'^*}e^{- \langle w,  u \rangle}\frac{\Gamma(\alpha)}{s^{\alpha-1}} q'(  w,s)d  w\right)\, q (  z,s)d  z ds.$$
All functions we consider are nonnegative, so we can apply  Tonelli's Theorem and get
$$B^{-\alpha}f(  x,A/B)\,\,=\,\,\int_{C^*\times C'^*}e^{- \langle  z,  x \rangle - \langle  w,  u \rangle }
\left(\int_{\R_{\geq 0}}\frac{\Gamma(\alpha)}{s^{\alpha-1}} q'(  w,s) q (  z,s)ds\right)\,d  w d  z.$$
The parenthesized expression is the desired Riesz kernel in \eqref{eq:desiredriesz}.
\end{proof}

We are now ready to prove Conjecture \ref{conj:MSUZ} for elementary symmetric polynomials. Our proof is constructive, 
i.e.,~it yields an explicit formula for the associated Riesz kernel \eqref{eq:Riesz}.   However, the construction is quite complicated, as Example \ref{ex:explicit} shows.

\begin{thm}\label{thm:elem}
For any elementary symmetric polynomial $E_{m,n}$, where $1\leq m\leq n$, there exists
a real number $\alpha'>0$ such that $E_{m,n}^{-\alpha}$ is completely monotone for all $\alpha\geq \alpha'$.
\end{thm}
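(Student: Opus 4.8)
The plan is to prove the theorem by induction on the number of variables $n$, peeling off one variable at a time and using the recursive structure of elementary symmetric polynomials together with Lemmas \ref{lem:rowno} and \ref{lem:podstaw}. The starting point is the identity $E_{m,n}(x_1,\dots,x_n) = E_{m,n-1}(x_1,\dots,x_{n-1}) + x_n \cdot E_{m-1,n-1}(x_1,\dots,x_{n-1})$, which exhibits $E_{m,n}$ in the form $A + B x_n$ with $A = E_{m,n-1}$ and $B = E_{m-1,n-1}$, both positive on $\R_{>0}^{n-1}$. The base cases are $m = 1$ (where $E_{1,n}$ is a single linear form, handled by Proposition \ref{prop:pushdirichlet}) and $m = n$ (where $E_{n,n} = x_1\cdots x_n$ is a product of linear forms, also handled by Section \ref{sec3}); in both cases every sufficiently negative power is completely monotone.

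For the inductive step I would argue as follows. Suppose the result holds for all elementary symmetric polynomials in fewer than $n$ variables; in particular fix $\alpha$ large enough that both $E_{m-1,n-1}^{-\alpha}$ and $E_{m,n-1}^{-\alpha}$ are completely monotone on $\R_{>0}^{n-1}$. By Lemma \ref{lem:rowno}, complete monotonicity of $E_{m,n}^{-\alpha} = (A + B x_n)^{-\alpha}$ on $\R_{>0}^{n-1}\times\R_{>0}$ is equivalent to complete monotonicity of $B^{-\alpha} e^{-tA/B}$ on $\R_{>0}^{n-1}$ for all $t \geq 0$, i.e.\ of $E_{m-1,n-1}^{-\alpha}\exp(-t\,E_{m,n-1}/E_{m-1,n-1})$. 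To produce such a function I would invoke Lemma \ref{lem:podstaw}: take $f(x,r)$ to be a completely monotone function on some cone $C \times \R_{>0}$ that restricts, after substituting $r = A/B = E_{m,n-1}/E_{m-1,n-1}$, to the desired exponential-times-power expression, and take $(A'+B'y)^{-\alpha}$ to supply the auxiliary kernel $q'$. The ratio $E_{m,n-1}/E_{m-1,n-1}$ is itself built from the two lower-order symmetric polynomials to which the inductive hypothesis applies, and the Riesz kernel of the product $B^{-\alpha}f(x, A/B)$ is the explicit convolution integral \eqref{eq:desiredriesz}, which is manifestly nonnegative. Tracking how $\alpha'$ grows at each of the $n$ stages yields the final bound.

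The main obstacle I anticipate is arranging the bookkeeping so that a single exponent $\alpha$ works simultaneously at every level of the recursion, and more subtly, identifying the correct completely monotone function $f(x,r)$ whose substitution $r \mapsto E_{m,n-1}/E_{m-1,n-1}$ reproduces $E_{m-1,n-1}^{-\alpha} e^{-t E_{m,n-1}/E_{m-1,n-1}}$ with the right homogeneity. Because $E_{m,n-1}/E_{m-1,n-1}$ is a ratio rather than a polynomial, one must check that the hypotheses of Lemma \ref{lem:podstaw} — that $A = E_{m,n-1}$ and $B = E_{m-1,n-1}$ are genuinely $\mathcal{C}^\infty$ and positive on the relevant cone, and that $(A + By)^{-\alpha}$ is completely monotone on $C' \times \R_{>0}$ — are met; the last point reduces, again via the recursion $E_{m,n-1} + y\,E_{m-1,n-2}$ or similar, to the inductive hypothesis one level down. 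Keeping the cones straight (we may work on $\R_{>0}^n$ throughout by Example \ref{ex:cm}, since the hyperbolicity cone of $E_{m,n}$ contains the orthant) and ensuring the convolution integrals converge are the routine-but-delicate parts. A clean way to organize the induction is to prove the slightly stronger statement that one can take $\alpha' = \alpha'(m,n)$ with an explicit formula such as $\alpha'(m,n) = n - m + \tfrac12$ or similar, so that the inductive step becomes a direct inequality check; pinning down the optimal such formula is exactly the content of the harder Conjecture \ref{conj:SS} and is \emph{not} required here, so I would settle for any finite bound that closes the recursion.
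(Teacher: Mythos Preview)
Your overall framework is right—the recursion $E_{m,n}=E_{m,n-1}+x_nE_{m-1,n-1}$ together with Lemma~\ref{lem:rowno} does reduce the problem to showing that $E_{m-1,n-1}^{-\alpha}\exp\bigl(-t\,E_{m,n-1}/E_{m-1,n-1}\bigr)$ is completely monotone on $\R_{>0}^{n-1}$ for all $t\ge 0$—but the step where you ``invoke Lemma~\ref{lem:podstaw}'' hides a genuine gap, not merely bookkeeping. If you take the obvious choice $A=E_{m,n-1}$, $B=E_{m-1,n-1}$ in that lemma (with $C$ trivial and $f(r)=e^{-tr}$), the hypothesis you must verify is that $(A+By)^{-\alpha}$ is completely monotone on $\R_{>0}^{n-1}\times\R_{>0}$; but $A+By=E_{m,n}$, so you are assuming exactly what you set out to prove. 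Your proposal never names any other concrete pair $(A',B')$ and function $f$ that would make the lemma apply non-circularly, and you yourself flag ``identifying the correct $f(x,r)$'' as the main obstacle. That identification \emph{is} the proof; the inductive hypothesis on $E_{m,n-1}$ and $E_{m-1,n-1}$ alone does not close it.

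The paper supplies the missing idea in two strokes. First, the elementary identity $m\,E_{m,n-1}=\sum_{i=1}^{n-1}Q_i$ with $Q_i=x_i\,E_{m-1,n-2}(x_1,\dots,\hat{x_i},\dots,x_{n-1})$ yields the factorization
\[
E_{m-1,n-1}^{-(n-1)\alpha}\,e^{-tm\,E_{m,n-1}/E_{m-1,n-1}}\;=\;\prod_{i=1}^{n-1}E_{m-1,n-1}^{-\alpha}\,e^{-tQ_i/E_{m-1,n-1}},
\]
so at the price of inflating the exponent by $n-1$ it suffices to treat a single factor. Second, by Lemma~\ref{lem:rowno} that factor corresponds to $P^{-\alpha}$ with $P=Q_{n-1}+y\,E_{m-1,n-1}$, and one computes
\[
P\;=\;E_{m-2,n-2}\cdot E_{2,3}\bigl(x_{n-1},\,y,\,E_{m-1,n-2}/E_{m-2,n-2}\bigr).
\]
Now Lemma~\ref{lem:podstaw} applies with $f=E_{2,3}^{-\alpha}$ (completely monotone for $\alpha\ge\tfrac12$ by Scott--Sokal) and $g=(E_{m-1,n-2}+x_{n-1}E_{m-2,n-2})^{-\alpha}=E_{m-1,n-1}^{-\alpha}$, the latter handled by induction on $m$ (not on $n$; the base case $m=2$ is known for all $n$). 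This gives $\alpha_{m,n}=(n-1)\max(\tfrac12,\alpha_{m-1,n-1})$. Without the $Q_i$-splitting and the reduction to $E_{2,3}$, your induction does not close.
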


\begin{proof} 
If $m=1$ or $m=n$, then $E_{m,n}^{-\alpha}$ is completely monotone on $\R_{>0}^n$ for any $\alpha\geq 0$ (see, e.g., Proposition \ref{prop:LapTrProdxi}). Also, by \cite[Corollary 1.10]{scott2014}, $E_{2,n}^{-\alpha}$ is completely monotone for $\alpha\geq (n-2)/2$. For $2<m<n$ we proceed by induction on $m$. We have 
\begin{align}\label{eq:ind}
E_{m,n} \,\,= \,\, E_{m,n-1}\,+ \, E_{m-1,n-1} \cdot y
\end{align}
where $y=x_n$ and the other variables in (\ref{eq:ind}) are $x_1,\ldots,x_{n-1}$. We apply
Lemma \ref{lem:rowno}. 

We must prove that there exists $\alpha_{m,n}>0$ such that, for all
$\,\alpha \geq\alpha_{m,n}\,$ and all $\,t \geq 0$, 
\begin{align}\label{eq:dobro}
E_{m-1,n-1}^{-\alpha} e^{-t E_{m,n-1}/E_{m-1,n-1}}\text{ is completely monotone on } \R_{>0}^{n-1} .
\end{align}
One can derive the following factorization, which holds for any fixed $t \geq 0$:
\begin{align}\label{eq:wnuk}
  E^{-(n-1)\alpha}_{m-1,n-1}e^{-tmE_{m,n-1}/E_{m-1,n-1}} \,\,=\,\, \prod_{i=1}^{n-1}E^{-\alpha}_{m-1,n-1}e^{-tQ_i/E_{m-1,n-1}}, 
\end{align}
where $Q_i=x_iE_{m-1,n-2}(x_1,\dots,\hat{x_i},\dots,x_{n-1})$. The hat means that $x_i$ is omitted.
We  claim that, for each $i$, the function $E_{m-1,n-1}^{-\alpha} e^{-tQ_i/E_{m-1,n-1}}$
 is completely monotone~on $\R_{>0}^{n-1}$ provided $\alpha\geq \max(1/2,\alpha_{m-1,n-1})$. 
 Then, by Proposition \ref{prop:conv}, 
 the product \eqref{eq:wnuk} is completely monotone on $\R_{>0}^{n-1}$
 for $t \geq 0$, and we take $\alpha_{m,n} =(n-1)\max(1/2,\alpha_{m-1,n-1})$.

Now, by symmetry, it suffices to show that $E_{m-1,n-1}^{-\alpha} e^{-tQ_{n-1}/E_{m-1,n-1}}$ is completely monotone.
 This is equivalent, by Lemma \ref{lem:rowno}, to complete monotonicity of $P^{-\alpha}$, where $P=Q_{n-1}+x_nE_{m-1,n-1}$. By \eqref{eq:ind}, $E_{m-1,n-1} = E_{m-1,n-2}+ x_{n-1}E_{m-2,n-2}$. This implies
\begin{equation}
\begin{aligned}
  P &\,\,=\,\, Q_{n-1}+ x_nE_{m-1,n-1} \,\,=\,\, x_{n-1} x_nE_{m-2,n-2}+( x_{n-1}+ x_n)E_{m-1,n-2} \\
  &\,\,=\,\, E_{m-2,n-2}E_{2,3}( x_{n-1}, x_n,E_{m-1,n-2}/E_{m-2,n-2}).
\end{aligned}
\end{equation}
Fix any $\alpha\geq \max(1/2, \alpha_{m-1,n-1})$. 
We apply Lemma \ref{lem:podstaw} to the functions  $f=E_{2,3}^{-\alpha}$ and
$g=E_{m-1,n-1}^{-\alpha} = (E_{m-1,n-2}+ x_{n-1}E_{m-2,n-2})^{-\alpha}$.
By \cite[Corollary 1.10]{scott2014} and
 the induction hypothesis respectively, these are completely monotone.  This implies the claim that
  $\,E_{m-2,n-2}^{-\alpha} E_{2,3}( x_{n-1}, x_n,E_{m-1,n-2}/E_{m-2,n-2})^{-\alpha} \,=\, P^{-\alpha}\,$ 
  is completely monotone.
\end{proof}

The first new case of complete monotonicity concerns large negative powers of $E_{3,5}$.

\begin{exmp}\label{ex:explicit}
We here illustrate our proof of Theorem \ref{thm:elem} by deriving the Riesz kernel for $E_{3,5}^{-\beta}$
from its steps.
By \cite[Corollary 1.10]{scott2014}, the functions $E_{2,4}^{-\alpha}$ and $E_{2,3}^{-\alpha}$ are completely monotone for any $\alpha>1$.
 By \cite[Corollary 5.8]{scott2014}, their Riesz kernels are 
 \begin{equation}
 \begin{matrix}
 &
q_1(\alpha)(y_1,y_2,y_3,y_4) & = & \frac{3^{\frac{3}{2}-\alpha}}{2\pi\Gamma(\alpha)\Gamma(\alpha-1)}\left(E_{2,4}(y_1,y_2,y_3,y_4)-(y_1^2+y_2^2+y_3^2+y_4^2)\right)^{\alpha-2}
\\ {\rm and} & 
q_2(\alpha)(y_1,y_2,y_3) & = & \frac{2^{1-\alpha}}{(2\pi)^{\frac{1}{2}}\Gamma(\alpha)\Gamma(\alpha-\frac{1}{2})}\left(E_{2,3}(y_1,y_2,y_3)-\frac{1}{2}(y_1^2+y_2^2+y_3^2)\right)^{\alpha-\frac{3}{2}}.
\end{matrix}
\end{equation}
We apply Lemma \ref{lem:podstaw} to 
$g=E_{2,4}^{-\alpha}=(x_1x_2+x_1x_3+x_2x_3+ (x_1+x_2+x_3)x_4)^{-\alpha}$ 
with $y=x_4$, and  to $f=E_{2,3}^{-\alpha}=(x_4x_5+(x_4+x_5)x_3)^{-\alpha}$ with $y=x_3$.
We conclude that
\begin{equation}\label{eq:super}
\begin{aligned}
&(x_1+x_2+x_3)^{-\alpha}(x_4x_5+(x_4+x_5)(x_1x_2+x_1x_3+x_2x_3)/(x_1+x_2+x_3))^{-\alpha}\\
=\,\,&(x_1x_2x_4+x_1x_3x_4+x_2x_3x_4+x_5(x_1x_4{+}x_2x_4{+}x_3x_4{+}x_1x_2{+}x_1x_3{+}x_2x_3))^{-\alpha}\\
=\,\, &(Q_4+x_5E_{2,4})^{-\alpha}
\end{aligned}
\end{equation}
is completely monotone for $\alpha>1$. It has the Riesz kernel
\begin{align}
  q(\alpha)(y_1,\dots,y_5) \,\,= \,\,
  \int_{\R_{\geq 0}}\frac{\Gamma(\alpha)}{s^{\alpha-1}}q_1(\alpha)(y_1,y_2,y_3,s)q_2(\alpha)(y_4,y_5,s)ds.
\end{align} 
By Lemma \ref{lem:rowno} applied to \eqref{eq:super}, we derive, for any $t\geq 0$, the complete monotonicity of
$$ E_{2,4}^{-\alpha} \cdot e^{-tQ_4/E_{2,4}}\,\,= \,\, E_{2,4}^{-\alpha} \cdot e^{-t(x_1x_2x_4+x_1x_3x_4+x_2x_3x_4)/E_{2,4}}. $$
Proceeding as in the proof of Lemma \ref{lem:podstaw}, we express its Riesz kernel $R_4(\alpha,t)$ as follows
\begin{align}\label{eq:R_4}
R_{4}(\alpha,t)(y_1,y_2,y_3,y_4)
\,\,= \,\, \frac{\Gamma(\alpha)}{t^{\alpha-1}}q(\alpha)(y_1,y_2,y_3,y_4,t).
\end{align}

In the same way one obtains complete monotonicity, for any $t\geq 0$, of the functions
\begin{align}
 & E_{2,4}^{-\alpha}e^{-tQ_1/E_{2,4}} \,\,=\,\, E_{2,4}^{-\alpha}e^{-t(x_1x_2x_3+x_1x_2x_4+x_1x_3x_4)/E_{2,4}},\\
  & E_{2,4}^{-\alpha}e^{-tQ_2/E_{2,4}} \,\, = \,\, E_{2,4}^{-\alpha}e^{-t(x_1x_2x_3+x_1x_2x_4+x_2x_3x_4)/E_{2,4}},\\
  & E_{2,4}^{-\alpha}e^{-tQ_3/E_{2,4}} \,\, =\,\, E_{2,4}^{-\alpha}e^{-t(x_1x_2x_3+x_1x_3x_4+x_2x_3x_4)/E_{2,4}}.
\end{align}
To obtain their Riesz kernels $R_1(\alpha,t), R_2(\alpha,t),R_3(\alpha,t)$,
we exchange $y_4$ with $y_1,y_2, y_3$ in \eqref{eq:R_4}.
Multiplying the four functions together, we obtain complete monotonicity of
\begin{align}\label{eq:trick}
  E_{2,4}^{-4\alpha}e^{-3tE_{3,4}/E_{2,4}}\,\,= \,\,\prod_{i=1}^4 E_{2,4}^{-\alpha}e^{-tQ_i/E_{2,4}}.
\end{align}
The Riesz kernel of (\ref{eq:trick}) is written as $R(4\alpha,3t)$.
By Proposition \ref{prop:conv}, this is the convolution
of $R_1(\alpha,t)$, $R_2(\alpha,t)$, $R_3(\alpha,t)$ and $R_4(\alpha,t)$.
By applying Lemma \ref{lem:rowno} to \eqref{eq:trick} we derive complete monotonicity of $E_{3,5}^{-\beta}$ for $\beta=4\alpha>4$.
 Lemma \ref{lem:podstaw} yields the formula 
\begin{align}
q_{\beta}(y_1,\dots,y_4,y_5) \,\,=\,\,\frac{y_5^{\beta-1}}{\Gamma(\beta)} R(\beta,y_5)(y_1,\dots,y_4)
\end{align}
for the Riesz kernel of $E_{3,5}^{-\beta}$. We note
that $q_{\beta}$ is symmetric in its five arguments.
\end{exmp}

We now come to our second main result in this section, namely 
the only if direction in Conjecture \ref{conj:SS}. This was posed by Scott and Sokal.
 Note that Conjecture \ref{conj:SS}
holds  for $m=n$  since $E_{n,n}(x)=x_1 x_2 \cdots x_n$, with Riesz kernel 
for all negative powers given in Proposition \ref{prop:LapTrProdxi}.
  We now prove that the condition $\alpha\geq (n-m)/2$ from Conjecture \ref{conj:SS} is necessary for $E_{m,n}^{-\alpha}$ to be completely monotone on the positive orthant $\R^n_{>0}$. 
    
\begin{thm}\label{thm:nec}
Let $2{\leq} m{<}n$. If $E_{m,n}^{-\alpha}$ is completely monotone, then $\alpha=0$ or $\alpha\geq \frac{n-m}{2}$.
\end{thm}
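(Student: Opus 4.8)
The plan is to reduce the problem to a one-variable asymptotic analysis by exploiting the restriction operation of Remark \ref{rem:restriction} together with the fact, recorded in Remark \ref{rem:Riesz}, that complete monotonicity forces nonnegativity of the Riesz distribution. First I would pick a clever affine-linear substitution of the variables $x_1,\dots,x_n$ that collapses $E_{m,n}$ to a polynomial in fewer variables whose behaviour for large argument is governed precisely by the threshold $(n-m)/2$. The natural candidate: set all but a few variables equal so that $E_{m,n}$ becomes, up to lower-order terms, something like $c\cdot u^{m-1} v + d\cdot u^m$ or more usefully a product $u^{n-m} \cdot (\text{quadratic in two new variables})$; the point of such a restriction is that negative powers of $u^a \cdot q(v,w)^b$ with $q$ an indefinite quadratic can only be completely monotone when $b$ is large enough, and by Theorem \ref{thm:det} (the $m=2$ determinant case) or by Conjecture \ref{conj:SS} for $E_{2,\cdot}$ already settled by Scott--Sokal, the exact threshold is known.

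Concretely, the key step is to choose the specialization carefully. Restricting $E_{m,n}$ to a suitable $2$- or $3$-dimensional subspace should produce a scalar multiple of $x_1^{a} E_{2,k}(\dots)$ for appropriate $a,k$, because of the recursion $E_{m,n} = E_{m,n-1} + x_n E_{m-1,n-1}$ iterated and the symmetry of $E_{m,n}$. Since negative powers of a product are completely monotone only if each factor's power is, and since monomials $x_1^{-a}$ are always fine, the binding constraint comes from the $E_{2,k}$ factor: by the already-known $m=2$ case, $E_{2,k}^{-\alpha}$ is completely monotone iff $\alpha \geq (k-2)/2$. Arranging $k-2 = n-m$ via the specialization then yields $\alpha \geq (n-m)/2$. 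So the heart of the argument is: exhibit an affine substitution $\R^d \hookrightarrow \R^n$ meeting the positive orthant such that $E_{m,n}$ restricts to $(\text{positive constant}) \cdot (\text{monomial}) \cdot E_{2,n-m+2}(\text{new coordinates})$, then invoke Remark \ref{rem:restriction} and the $m=2$ base case.

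If such an exact factorization on a subspace is not literally available, the fallback is a direct asymptotic estimate of a single high-order signed derivative, in the spirit of Example \ref{ex:intro}: restrict to a line or low-dimensional cone where $E_{m,n}$ becomes $A(x) + B(x)\cdot t$ with $A,B$ explicit, apply Lemma \ref{lem:rowno} to reduce complete monotonicity of $E_{m,n}^{-\alpha}$ to that of $B^{-\alpha} e^{-tA/B}$, and then push the parameters to a regime where one can compute or bound the relevant derivative and detect a sign change when $\alpha < (n-m)/2$. Equivalently one may use the Laplace-transform characterization (Theorem \ref{thm:BHWC}): if $E_{m,n}^{-\alpha}$ were completely monotone, its Riesz measure on the psd-like dual cone would have to be a genuine nonnegative measure, and the homogeneity degree of that measure is $\alpha - (\text{something})$; for $\alpha$ too small this degree is too negative for a locally finite measure to exist near the apex of the cone, giving the contradiction.

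I expect the main obstacle to be producing the right substitution cleanly. Getting $E_{m,n}$ to restrict to exactly $x_1^{n-m}E_{2,m}(\dots)$ or $(\prod x_i)\cdot E_{2,n-m+2}(\dots)$ on a subspace that genuinely meets $\R^n_{>0}$ requires some combinatorial care with which variables to identify or send to zero, and one must check that the resulting quadratic is the \emph{generic} (indefinite) $E_{2,k}$ and not a degenerate perfect square, since the threshold for a perfect square is different. A secondary subtlety is that the conjectured interval also includes the discrete points $\alpha = 0, \tfrac12, \dots$ in the determinant case but \emph{not} in Conjecture \ref{conj:SS} (there it is only $\alpha = 0$ or $\alpha \geq (n-m)/2$, a genuine half-line); so the restriction target must be an $E_{2,k}$ arrangement, where no extra discrete values appear, rather than a determinant of size $\geq 3$. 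Once the substitution is pinned down, the rest is a formal application of results already in the paper.
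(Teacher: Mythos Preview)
Your fallback is on the right track and in fact contains the whole proof, but you are overcomplicating it. The paper's argument is a three-line induction on $m$: write $E_{m,n}=E_{m,n-1}+x_n\,E_{m-1,n-1}$, observe that if $E_{m,n}^{-\alpha}$ is completely monotone then so is $E_{m-1,n-1}^{-\alpha}$ (the paper cites \cite[Lemma~3.1]{scott2014}, but your Lemma~\ref{lem:rowno} at $t=0$ gives the same conclusion, since then $B^{-\alpha}=E_{m-1,n-1}^{-\alpha}$), and iterate down to $E_{2,n-m+2}^{-\alpha}$, where the known $m=2$ case forces $\alpha\geq (n-m)/2$. There is no need to hunt for sign changes of a high derivative or to analyze the Riesz distribution near the apex; the $t=0$ specialization of Lemma~\ref{lem:rowno} already does all the work.

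Your primary approach, a single linear restriction producing exactly $(\text{monomial})\cdot E_{2,n-m+2}$, is speculative and you rightly flag it as uncertain; I do not know such a substitution, and restricting $E_{m,n}$ to a generic low-dimensional subspace meeting $\R^n_{>0}$ typically yields an irreducible hyperbolic polynomial rather than a clean factorization. More seriously, the step ``negative powers of a product are completely monotone only if each factor's power is'' is not a general fact and would itself require an asymptotic or scaling argument of exactly the kind that already proves the theorem directly. So the detour through a hoped-for factorization buys nothing: the recursion plus induction is both simpler and complete.
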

\begin{proof}
The proof is by induction on $m$. The base case $m=2$  was already established in 
\cite[Corollary 1.10]{scott2014}. Assume that $E_{m,n}^{-\alpha}$ is completely monotone on $\R_{>0}^n$. Then
\begin{align}
E_{m,n}^{-\alpha}(x_1,x_2,\dots,x_n) \quad = \quad (x_1 E_{m-1,n-1}(x_2,\dots,x_n)+E_{m,n-1}(x_2,\dots,x_n))^{-\alpha}.  
\end{align}
For large $x_1>0$, the sign of any derivative with respect to $x_2,\dots,x_n$ of the functions $E_{m-1,n-1}^{-\alpha}(x_2,\dots,x_n)$, $x_2,\dots,x_n>0$, and $E_{m,n}^{-\alpha}(x_1,\dots,x_n)$, $x_1,\dots,x_n>0$, is the same (see \cite[Lemma 3.1]{scott2014}). It follows that $E_{m-1,n-1}^{-\alpha}$ is completely monotone.
Hence, by induction, we have $\alpha \geq  \frac{n-1-(m-1)}{2} = \frac{n-m}{2}$.
This completes the proof of Theorem~\ref{thm:nec}.
\end{proof}

\section{Hypergeometric Functions}\label{sec7}

In \eqref{eq:factored} we started with 
$\, f \,= \, p_1^{s_1} p_2^{s_2} \cdots p_m^{s_m} $,
where $p_i$ is a polynomial in $x = (x_1,\ldots,x_n)$.
This expression can be viewed as a function in three different ways.
First of all, it is a function in $x$, with domain $C$.
Second, it is a function in $s = (s_1,\ldots,s_m)$, with domain
a subset of $\RR_{<0}^m$. And, finally, we can view $f$ as 
function in the coefficients of the polynomials $p_i$.
It is this third interpretation which occupies us in this final section.

Let us begin with the case $m=1$ and consider $f = p^s$, for some hyperbolic polynomial
\begin{equation}
\label{eq:zpoly}
 p \quad = \quad \sum_{a \in \mathcal{A}} z_a \cdot x_1^{a_1} x_2^{a_2} \cdots x_n^{a_n} . 
 \end{equation}
Here $\mathcal{A}$ is a subset of $\mathbb{N}^n$ whose elements have a fixed coordinate sum $d = {\rm degree}(f)$.
We fix  $s = - \alpha$ such that $p = f^s$ is completely monotone. 
We assume that $p$ has Riesz kernel $q(z;y)$, which we consider as a function of the coefficient vector $z = (z_a: a \in \mathcal{A})$.

Let $\mathcal{D} = \mathbb{C} \langle z_a , \partial_a  : a \in \mathcal{A} \rangle $ denote the 
{\em Weyl algebra}
on the $| \mathcal{A}|$-dimensional affine space $\mathbb{C}^{\mathcal{A}}$ whose coordinates are the coefficients $z_a$
in \eqref{eq:zpoly}.
We briefly recall (e.g.~from \cite{SST}) the definition of the 
{\em $\mathcal{A}$-hypergeometric system} $H_\mathcal{A}(\beta)$
with parameters $\beta \in \mathbb{C}^n$. 

The system $H_\mathcal{A}(\beta)$ is the left ideal in $\mathcal{D}$
generated by two sets of differential operators:
\begin{itemize}
 \item the $n$ {\em Euler operators} $\,\sum_{a \in \mathcal{A}} a_i z_a \partial_a - \beta_i $, where $i=1,2,\ldots,n$; \smallskip
 \item the {\em toric operators}  $ \,\prod_{a \in \mathcal{A}} \partial_{a}^{u_a} -  \prod_{a \in \mathcal{A}} \partial_{a}^{v_a} $,
 where $u_a,v_a$ are nonnegative integers satisfying $\sum_{a \in \mathcal{A}} (u_a-v_a) a = 0$.
 Here it suffices to take a {\em Markov basis}  \cite{Seth} for~$\mathcal{A}$.
\end{itemize}
It is known (cf.~\cite[Chapter 4]{SST}) that $H_\mathcal{A}(\beta)$ is regular holonomic and its
holonomic rank equals
${\rm vol}(\mathcal{A})$ for generic parameters $\beta$. A 
sufficiently differentiable function on an open subset of
$\mathbb{R}^\mathcal{A}$ or $\mathbb{C}^\mathcal{A}$ 
is called {\em $\mathcal{A}$-hypergeometric} if it is annihilated by
the toric operators. It is {\em $\mathcal{A}$-homogeneous of degree $\beta$}
if it annihilated also by the $n$ Euler operators. 
% The function is
% {\em $\mathcal{A}$-hypergeometric of degree $\beta$}
% if it satisfies both, i.e.~it is annihilated by~$H_\mathcal{A}(\beta)$.

\begin{prop} \label{prop:Ahyper}
The Riesz kernel $q(z;y)$ of $p^{-\alpha}$ is $\mathcal{A}$-hypergeometric
in the coefficients $z$ of the polynomial $p$ as in \eqref{eq:zpoly}.
However, it is generally not $\mathcal{A}$-homogeneous.
\end{prop}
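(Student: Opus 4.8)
The plan is to show that $q(z;y)$ satisfies the toric operators of $H_{\mathcal A}(\beta)$ but not all of the Euler operators, by transferring these differential-equation properties from the function $p^{-\alpha}$ itself through the integral representation \eqref{eq:Riesz1}. The starting observation is that $f = p(z;x)^{-\alpha}$, viewed as a function of the coefficient vector $z$, is manifestly $\mathcal A$-hypergeometric: for any lattice relation $\sum_a (u_a - v_a) a = 0$ among the exponents, we have $\partial_a p^{-\alpha} = -\alpha\, p^{-\alpha-1} x^a$, and iterating, $\prod_a \partial_a^{u_a} p^{-\alpha} = (-\alpha)(-\alpha-1)\cdots(-\alpha - |u|+1)\, p^{-\alpha-|u|} x^{\sum u_a a}$, where $|u| = \sum_a u_a$. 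Since $|u| = |v|$ (the exponents of $\mathcal A$ all have the same coordinate sum $d$, so the relation $\sum(u_a-v_a)a=0$ forces $\sum(u_a-v_a)=0$ after dividing by $d$, hence $|u|=|v|$) and $\sum_a u_a a = \sum_a v_a a$, the two monomials in $x$ and the two falling-factorial scalars agree, so $\bigl(\prod_a \partial_a^{u_a} - \prod_a \partial_a^{v_a}\bigr) p^{-\alpha} = 0$. Thus $f$ is annihilated by every toric operator.

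Next I would push this through the integral. For $\operatorname{Re}(\alpha) > n$ (or more generally wherever \eqref{eq:Riesz1} with $q = q_\alpha$ from \eqref{eq:Riesz} is valid), differentiate the complex representation \eqref{eq:Riesz} for $q_\alpha(z;y) = (2\pi)^{-n}\int_{\R^n} p(z; e + i x)^{-\alpha} e^{\langle y, e+ix\rangle}\,dx$ under the integral sign with respect to the coordinates $z_a$. Since $\partial_a$ acts only on the factor $p(z;e+ix)^{-\alpha}$ and the decay estimates that justify Theorem \ref{thm:Riesz} are uniform enough to permit differentiation under the integral (one may need $\operatorname{Re}(\alpha)$ large, then invoke the analyticity in $\alpha$ from Theorem \ref{thm:Riesz} and Remark \ref{rem:Riesz} to extend the conclusion as an identity of distributions), the toric operators pass inside and annihilate the integrand pointwise by the computation above. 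Hence the toric operators annihilate $q_\alpha(z;y)$, so $q(z;y)$ is $\mathcal A$-hypergeometric.

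For the negative claim, that $q$ is generally not $\mathcal A$-homogeneous, I would examine what the Euler operators would force. If $q(z;y)$ were $\mathcal A$-homogeneous of some degree $\beta \in \C^n$, then applying $\sum_a a_i z_a \partial_a$ to $q$ would multiply it by $\beta_i$; summing over $i$, and using that $\sum_i a_i = d$ for all $a \in \mathcal A$, gives that $q$ is homogeneous of degree $\sum_i \beta_i / d \cdot (\,\cdot\,)$ — more precisely, the total-degree Euler operator $\sum_a d\, z_a\partial_a$ measures the scaling degree of $q$ in $z$ alone. But from the integral representation, rescaling $z \mapsto \lambda z$ rescales $p \mapsto \lambda p$, hence $p^{-\alpha} \mapsto \lambda^{-\alpha} p^{-\alpha}$, so $q(\lambda z; y) = \lambda^{-\alpha} q(z;y)$: the scaling degree in $z$ is exactly $-\alpha$. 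This is consistent with $\sum_i \beta_i = -d\alpha$. The genuine obstruction is that the individual Euler operators would additionally force $q$ to transform in a prescribed way under the torus action $x_i \mapsto t_i x_i$, equivalently under rescaling each $z_a \mapsto t^a z_a$; and on the $y$-side this is pinned down because $q$ represents $p^{-\alpha}$, whose behaviour under $x_i \mapsto t_i x_i$ is $p(z; t\cdot x)^{-\alpha} = p(t^{\mathrm{diag}}\!\cdot z; x)^{-\alpha}$, transferring to $q(z;y) \mapsto q$ with $y$ rescaled — so $q$ as a joint function of $z$ and $y$ is torus-equivariant, but $q$ as a function of $z$ alone with $y$ held fixed is \emph{not} a torus eigenfunction (it is a nontrivial superposition), so the Euler operators in $z$ do not act by scalars. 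Concretely I would exhibit this failure on the smallest example: take $p = E_{2,3} = z_1 x_1x_2 + z_2 x_1 x_3 + z_3 x_2 x_3$ with generic coefficients $z_i$ and $\alpha > 1/2$; the Riesz kernel, by the Scott–Sokal formula generalizing Example \ref{ex:intro}, is a constant times $\bigl(z_1 z_2 y_3^2 + z_1 z_3 y_2^2 + z_2 z_3 y_1^2 - \text{(mixed terms)}\bigr)^{\alpha - 3/2}$ up to normalization depending on the $z_i$, and one checks directly that $z_1\partial_{z_1} q$ is not a scalar multiple of $q$. The main obstacle is the analytic bookkeeping in the second paragraph — justifying differentiation under the integral and the passage to distributions for all $\alpha$ — whereas the non-homogeneity in the third paragraph is essentially a one-line scaling argument plus one explicit example.
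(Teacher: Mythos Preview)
Your proposal is correct and follows essentially the same approach as the paper: differentiate G{\aa}rding's formula \eqref{eq:Riesz} under the integral sign in $z$ to show that the toric operators annihilate $q$, and verify non-homogeneity on an explicit quadratic example (the paper simply cites the general quadratic-form formula in \cite[Proposition~5.6]{scott2014}, of which your $E_{2,3}$ example is a special case). You supply considerably more detail---the explicit computation that toric operators annihilate $p^{-\alpha}$, the analytic caveats on differentiating under the integral, and the torus-equivariance discussion---than the paper's two-sentence proof, but the underlying structure is identical.
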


\begin{proof} We use G{\aa}rding's integral
representation of $q(z;y)$ given in Theorem \ref{thm:Riesz}.
The toric operators annihilate $q(z;y)$ since we can differentiate 
with respect to $z$ under the integral sign.
The fact that the Riesz kernel is generally not 
$\mathcal{A}$-homogeneous in $z$ can be seen from
the explicit formula for quadratic forms $p$ given 
 in \cite[Proposition 5.6]{scott2014}.
 \end{proof}

We now start afresh and develop an alternative approach for products
of linear forms, as in Section~\ref{sec3}.
The history of hypergeometric functions dates back to 17th century, and there are 
numerous possible definitions.   We describe the approach of Aomoto and
Gel'fand \cite{Aomoto1994, Gel'fand1986, GZ1986},
albeit in its simplified form via local coordinates on the Grassmannian.

Consider an $n\times m$ matrix $A=(a_{ij})$, with $n\leq m$, 
where the first $n\times n$ submatrix 
is the  identity. 
Fix complex numbers $\alpha_1,\dots,\alpha_{m}$ that sum to $m-n$. The {\em hypergeometric 
function} with parameters $\alpha_i$
 is the function in the $n(m-n)$ unknowns  $a_{ij}$ defined by:
 \begin{equation}
 \label{eq:definedby}
\Phi(\alpha;a_{ij}) \,\,\,:= \,\,\,\int_{\mathbb{S}^{n-1}} \prod_{i=1}^n (x_i)^{\alpha_i-1}_+\prod_{j=n+1}^m (a_{1j}x_1+\dots+a_{nj}x_n)_+^{\alpha_j-1}\,dx.
\end{equation}
We integrate over the unit sphere $\mathbb{S}^{n-1}\subset\R^n$
against the standard measure $dx$.
Here, $(x)_+ = {\rm max}\{0,x\}$ for $x\in \R$. The integral in \eqref{eq:definedby} is convergent if 
${\rm Re}(\alpha_i)>0$. For other values of $\alpha_i$ the hypergeometric function $\Phi$ is defined via analytic continuation, see \cite{Gel'fand1986} for details.
One checks that the following partial
differential operators annihilate~$\Phi$:
\begin{enumerate}
\item Column homogeneity gives the operators
$ \,\sum_{i=1}^n a_{ij}\partial_{ij} -\alpha_j +1 \,\text{ for }n<j\leq m$.
\item Row homogeneity gives the operators
$\,\sum_{j=n+1}^m a_{ij}\partial_{ij} +\alpha_i \,\text{ for }1\leq i\leq n$.
\item We have the toric operators $\,{\partial_{ij}}{\partial_{i'j'}}-{\partial_{ij'}}{\partial_{i'j}}
\,\, \text{ for }1\leq i,i'\leq n<j,j'\leq m$.
\end{enumerate}
This means that the function $\Phi$ is  $\mathcal{A}$-hypergeometric,
in the sense defined above, if we
take $\mathcal{A}$ to be the vertex set of
the product of standard simplices $\Delta_n \times \Delta_{m-n}$.

\begin{exmp}[$n=2,m=4$] Fix $\alpha_1=2-\alpha_2-\alpha_3-\alpha_4$ and consider the matrix
$$ A\,\,=\,\, \begin{pmatrix} 1&0&a&b\\ 0&1&c&d\\
\end{pmatrix}. $$ 
 The hypergeometric function $\Phi$ is obtained by integrating a product of four functions, each 
 of which is zero on a half-plane. Hence the integrand is supported on a cone $C
 \subset \mathbb{R}^2$ defined by two out of four linear functions. Which functions these are, depends on the values of $a,b,c,d$. The integral over the circle $\mathbb{S}^1$ 
 is an integral over a circular arc, specified by $a,b,c,d$. This can be written
 as an integral over a segment in $\mathbb{R}^2$.
 For instance, consider the range of parameters given by   $0<-\frac{c}{a}<-\frac{d}{b}$
  and $0<c,d$. The boundary lines of the cone $C$ are $x_1=0$ and $ax_1+cx_2=0$. 
This is shown in Figure~\ref{fig:1}.
  
\begin{center}
\includegraphics[scale=0.33]{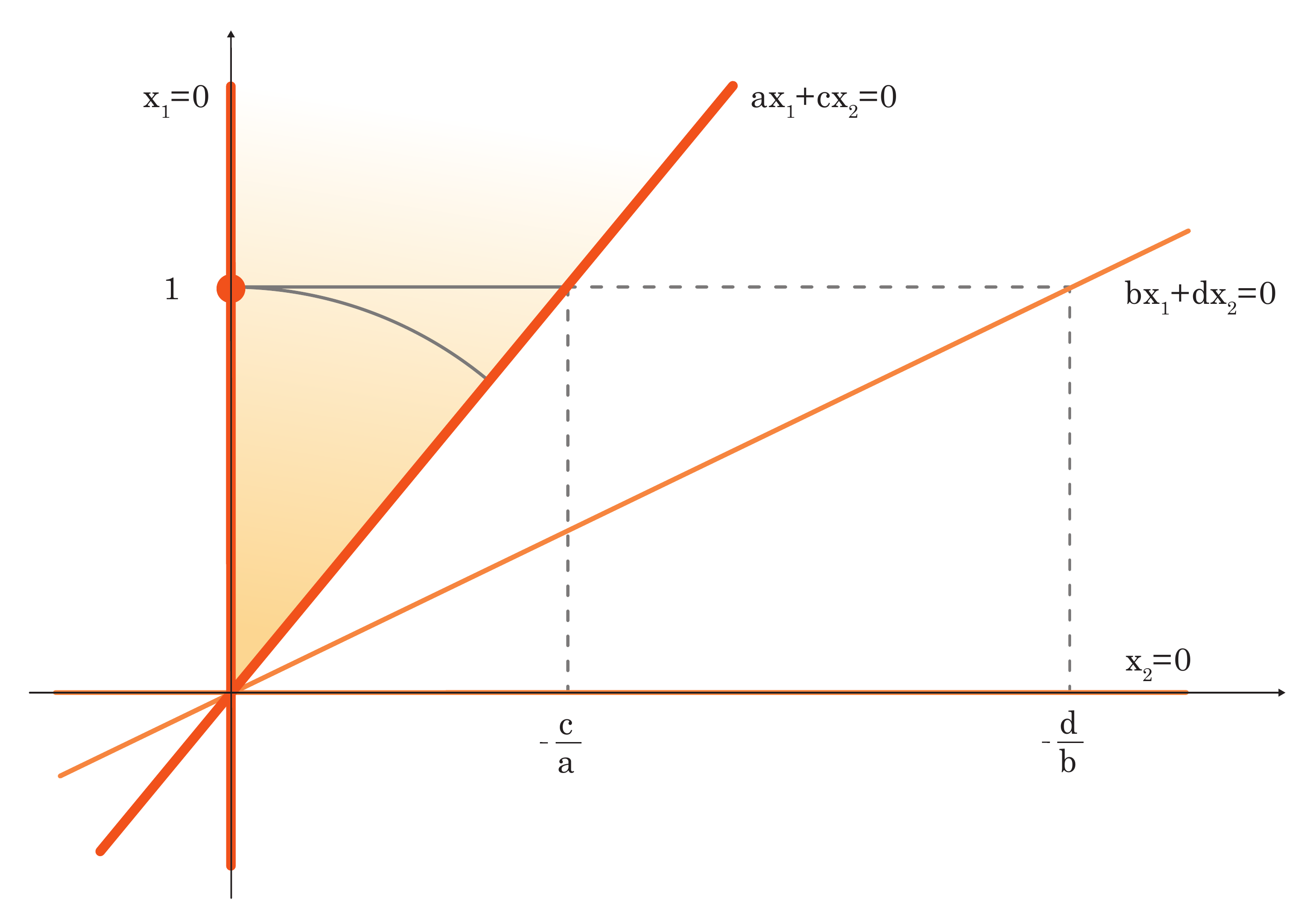}
\vspace{-0.25in}
\captionof{figure}{Four linear forms in two variables. They are positive in the
shaded region. Integration over the unit circle reduces to integration over 
the displayed circular arc.
   We change the integration contour to the horizontal segment.}
\label{fig:1}
\end{center}

Integrating along the segment between $(0,1)$ and $(-\frac{c}{a},1)$, we obtain the  formula
$$\Phi(\alpha_1,\alpha_3,\alpha_4;a,b,c,d) \,\,= \,\, 
\int_0^{-\frac{c}{a}}x_1^{\alpha_1-1}(ax_1+c)^{\alpha_3-1}(bx_1+d)^{\alpha_4-1}dx_1.$$
This integral can be expressed via the classical {\em Gauss  hypergeometric function}
$$_2F_1(a,b,c;z) \,\,:= \,\,\frac{\Gamma(c)}{\Gamma(b)\Gamma(c-b)}\int_0^1 x^{b-1}(1-x)^{c-b-1}(1-zx)^{-a}dx.$$
We assume for simplicity that $c>b>0$.
Performing easy integral transformations, given that
 $\alpha_i$ and $a,b,c,d$ satisfy the assumed inequalities, we obtain:
$$ \begin{matrix} \Phi(\alpha_1,\alpha_3,\alpha_4;a,b,c,d)
\,= \,
(-{a})^{-\alpha_1}c^{\alpha_1+\alpha_3-1}d^{\alpha_4-1}
\frac{ \Gamma(\alpha_1) \Gamma(\alpha_3)}{\Gamma(\alpha_1+\alpha_3)}
 {}_2F_1(1-\alpha_4, \alpha_1,\alpha_1+\alpha_3;\frac{bc}{ad}). \end{matrix} $$
\end{exmp}

Below we present an example involving three 
linear forms in two variables. Here the Riesz kernel is expressed in terms of the classical Gauss 
hypergeometric function $_2F_1$.

\begin{exmp}
Let $p(x)=x_1 x_2(x_1+vx_2)$ with hyperbolicity cone $C=\R_{>0}^2$, where $v>0$.
We consider the function $f = p^{-\alpha}$, where  $\alpha>0$. Then
\begin{align}
  q(y) \,\,=\,\, 
  \begin{cases}
    \frac{(y_2/v)^{2\alpha-1}(vy_1-y_2)^{\alpha-1}}{\Gamma(\alpha)\Gamma(2\alpha)} 
    \,_2F_1\left(1-\alpha,\alpha;2\alpha;\frac{y_2}{y_2-vy_1}\right), & \  \textrm{\normalfont{if}}\ \ 0\leq y_2\leq vy_1,\\
    \quad
 \frac{y_1^{2\alpha-1}(y_2-vy_1)^{\alpha-1}}{\Gamma(\alpha)\Gamma(2\alpha)}
\, \,_2F_1\left(1-\alpha,\alpha,2\alpha;\frac{-vy_1}{y_2-vy_1}\right), & \  \text{\normalfont{if}}\  \ 0\leq y_1\leq y_2/v.
 \end{cases}
\end{align}
\end{exmp}

For the general case, let $\ell_1,\ldots,\ell_m$ be linear forms on $\R^n$
which span a full-dimensional pointed cone $C^*$ in  $(\R^n)^*$.  After a linear
change of coordinates, we can assume that 
 $\ell_{m-n+1},\dots,\ell_m$ is a basis of $(\R^n)^*$,
 and that each other $\ell_i$ has coordinates $y_i = (y_{i1},\ldots,y_{in})$ in that basis.
  Consider the projection $\,\R^m \rightarrow \R^n,\, e_i\mapsto \ell_i$. 
 The kernel of this linear map is spanned by the rows of the following $(m-n)\times m$ matrix:
$$ \begin{small} \begin{pmatrix}
1&0&\cdots&0&-y_1\\
0&1&\cdots&0&-y_2\\
\vdots&&\ddots&&\vdots\\
0&0&\cdots&1&-y_{m-n}\\
\end{pmatrix} . \end{small} $$
We extend the above matrix to an $(m-n+1)\times(m+1)$ matrix by adding a
first column $(1,0,\dots,0)$ and a first row that encodes
the vector $y$ of unknowns:
$$ \begin{small} \begin{pmatrix}
1&0&0&\cdots&0&y\\
0&1&0&\cdots&0&-y_1\\
0&0&1&\cdots&0&-y_2\\
\vdots&\vdots&&\ddots&&\vdots\\
0&0&0&\cdots&1&-y_{m-n}\\
\end{pmatrix}.
\end{small} $$
The $(i+1)$-st column in the above matrix is associated to the linear form $y_i$,
and hence we may associate to it the parameter $\alpha_i$. 
We finally define $\alpha_0$ by the equality $\sum_{i=0}^m\alpha_i= n$. 
The following formula 
gives an alternative perspective on
 Theorem \ref{thm:prodlin}.

\begin{thm} \label{thm:prodlinhyper}
Using the notation above, the Riesz kernel for $\,f = \prod_{i=1}^m \ell_i^{-\alpha_i}$ equals 
$$ q(y) \,\, = \,\,
\frac{\Phi(\alpha; y,-y_1,\dots,-y_{m-n})}{\prod_{i=1}^m\Gamma(\alpha_i)}, $$
where the numerator is the Aomoto-Gel'fand hypergeometric function
defined in \eqref{eq:definedby}.
\end{thm}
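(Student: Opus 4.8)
The plan is to reduce the statement to the fiber‑integral formula for the Riesz kernel in Theorem~\ref{thm:prodlin}, and then to identify that integral with the sphere integral defining $\Phi$ in \eqref{eq:definedby} by converting the latter into an integral over an affine hyperplane. Throughout I work with the coordinates fixed in the paragraph preceding the theorem: $L$ is the $n\times m$ matrix with columns $\ell_1,\dots,\ell_m$, whose last $n$ columns form $I_n$, while the $i$‑th column ($1\le i\le m-n$) is the row vector $y_i=(y_{i1},\dots,y_{in})$. Write $Y=(y_{ik})$ for the resulting $(m-n)\times n$ matrix and $y=(y^{(1)},\dots,y^{(n)})$ for the coordinates of the point of $C^*$ at which we evaluate.

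First I would parametrize the fiber. Since $\ell_i=\sum_k y_{ik}\ell_{m-n+k}$ for $i\le m-n$, the fiber $L^{-1}(y)\subset\R^m_{\ge 0}$ is the set of $(s_1,\dots,s_{m-n})$ with $s_i\ge 0$ and $s_{m-n+k}:=y^{(k)}-\sum_{i=1}^{m-n}y_{ik}s_i\ge 0$; its linear part is the row span of $K=(\,I_{m-n}\mid -Y\,)$, so the intrinsic $(m-n)$‑dimensional Lebesgue measure on the fiber pulls back to $|K|\,ds_1\cdots ds_{m-n}$. Plugging the Dirichlet density $\prod_{i=1}^m s_i^{\alpha_i-1}\Gamma(\alpha_i)^{-1}$ of Theorem~\ref{thm:prodlin} into this parametrization gives
$$ q(y)\;=\;\frac{|K|}{|L|\,\prod_{i=1}^m\Gamma(\alpha_i)}\int_{\R^{m-n}_{\ge 0}}\prod_{i=1}^{m-n}s_i^{\alpha_i-1}\prod_{k=1}^n\Bigl(y^{(k)}-\sum_{i=1}^{m-n}y_{ik}s_i\Bigr)_+^{\alpha_{m-n+k}-1}\,ds. $$

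Next I would rewrite $\Phi(\alpha;y,-y_1,\dots,-y_{m-n})$ from \eqref{eq:definedby}. Its ambient dimension is $m-n+1$ and its integrand is a product of $m+1$ factors with exponents summing to $n-(m+1)=-(m-n+1)$, hence it is homogeneous of degree equal to minus the ambient dimension and, because of the factor $(z_0)_+^{\alpha_0-1}$ attached to the first column $(1,0,\dots,0)$, it is supported up to a null set in the half‑space $\{z_0\ge 0\}$. For such an integrand a one‑line computation with the gnomonic projection $z\mapsto z/z_0$ of the upper hemisphere shows that the integral over $\S^{m-n}$ equals the integral over the affine hyperplane $\{z_0=1\}$; setting $z_0=1$ removes the factor $(z_0)_+^{\alpha_0-1}$, the surviving $(z_i)_+$ enforce $s_i\ge 0$, and the factors $\bigl(y^{(k)}z_0-\sum_i y_{ik}z_i\bigr)_+$ coming from the last $n$ columns become exactly $s_{m-n+k}\ge 0$. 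Thus $\Phi(\alpha;y,-y_1,\dots,-y_{m-n})$ equals precisely the integral in the display above, and the theorem reduces to the identity $|K|=|L|$.

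For this last point, $LL^{T}=I_n+Y^{T}Y$ and $KK^{T}=I_{m-n}+YY^{T}$, so $|L|=|K|$ by Sylvester's determinant identity $\det(I+AB)=\det(I+BA)$; this proves the theorem for parameters in the convergent range $\mathrm{Re}(\alpha_i)>0$, and the general case follows since both sides are defined by analytic continuation in $\alpha$. The step that needs genuine care is the normalization: one must keep simultaneous track of the $(m-n)$‑dimensional Lebesgue measure on the fiber, the coarea factor $|L|^{-1}$ built into Theorem~\ref{thm:prodlin}, and the Jacobian of the gnomonic projection, and check that they cancel so that only $\prod_i\Gamma(\alpha_i)^{-1}$ remains — with the Sylvester identity being the clean reason the two volume‑normalizing determinants $|L|$ and $|K|$ coincide.
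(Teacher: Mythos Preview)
Your argument is correct and follows the same overall strategy as the paper: both reduce to the fiber-integral formula of Theorem~\ref{thm:prodlin} and then identify that integral with $\Phi(\alpha;y,-y_1,\dots,-y_{m-n})$. The difference is in how the identification \eqref{eq:hyp2} is established. The paper simply cites Gel'fand--Zelevinsky for this identity, noting that ``this derivation is non-trivial,'' and then compares with Theorem~\ref{thm:prodlin}. You instead supply a direct elementary proof: the gnomonic projection turns the sphere integral into an integral over $\{z_0=1\}$ because the integrand is homogeneous of degree $-(m-n+1)$ and supported in $\{z_0\ge 0\}$, and Sylvester's identity $\det(I_n+Y^TY)=\det(I_{m-n}+YY^T)$ matches the normalizing factors $|L|$ and $|K|$. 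Your route is more self-contained and makes transparent exactly where the volume factors go; the paper's route is shorter on the page but relies on an external reference for the substantive step.
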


\begin{proof}
By the results of Gel'fand and Zelevinsky in \cite{GZ1986}, the function $\Phi$ satisfies
\begin{align}\label{eq:hyp2}
  \Phi(\alpha;y,-y_1,\dots,-y_{m-n})  \,\,= \,\,
  \frac{1}{|L|}
  \int_{L^{-1}(y)} \prod\limits_{i=1}^m x_i^{\alpha_i-1} \,dx.
\end{align}
This derivation is non-trivial.
Here, $L:\R^m_{\geq 0}\rightarrow C^*$ is the linear projection taking the standard basis $e_1,\dots,e_m$ to the linear forms $\ell_1,\dots, \ell_m$. Comparing this expression with the formula for the Riesz kernel given in Theorem \ref{thm:prodlin}, we obtain the claimed result. 
\end{proof}

Theorem \ref{thm:prodlinhyper} serves a blueprint for
other completely monotone functions (\ref{eq:factored}).
We are optimistic that  future formulas for Riesz kernels 
will be inspired by Proposition \ref{prop:Ahyper}.

\bibliographystyle{plain}

\bigskip
\bigskip

\noindent
\footnotesize {\bf Authors' addresses:}

\noindent
Khazhgali Kozhasov,
TU Braunschweig,
\hfill {\tt k.kozhasov@tu-braunschweig.de}

\noindent
Mateusz Micha\l ek,
MPI-MiS Leipzig and Aalto University,
\hfill {\tt mateusz.michalek@mis.mpg.de}

\noindent Bernd Sturmfels,
 \  MPI-MiS Leipzig and
UC  Berkeley, \hfill  {\tt bernd@mis.mpg.de}

\end{document}